\documentclass{amsart}
\usepackage{amsfonts}
\usepackage{amsmath}
\usepackage{comment}
\usepackage{fullpage}
\usepackage[utf8]{inputenc}
\usepackage[english]{babel}
\usepackage{amsthm}
\usepackage{xcolor}
\usepackage{tikz}
\usepackage{cite}
\usepackage{multicol}
\usetikzlibrary{shapes, patterns}
\usepackage{appendix}
\usepackage[foot]{amsaddr}

\newcommand{\Q}{{\mathbb Q}}
\newcommand{\R}{{\mathbb R}}

\newcommand{\Z}{{\mathbb Z}}
\newcommand{\B}{{\mathcal B}}

\newcommand{\M}{{\mathcal M}_{I,d}}
\newcommand{\Mp}{{\mathcal M}_{I',d}}
\newcommand{\N}{{\mathbb N}}
\newcommand{\C}{{\mathcal C}_I}
\newcommand{\D}{{\mathcal{D}_I}}
\newcommand{\Cp}{{\mathcal{C}'_I}}
\newcommand{\Dp}{{\mathcal{D}'_I}}

\newtheorem{theorem}{Theorem}[section]
\newtheorem{lemma}[theorem]{Lemma}
\newtheorem{cor}[theorem]{Corollary}
\newtheorem*{defin}{Definition}
\newtheorem*{exam}{Examples}

\title{Cohomology of Polychromatic Configuration Spaces of $\R^d$}
\author{Nicholas Kosar}
\address{University of Illinois at Urbana-Champaign}
\email{kosar2@illinois.edu}
\date{}

\begin{document}
\maketitle

\begin{abstract}
Recently, the homology and cohomology of non-$k$-overlapping discs, or, equivalently, no $k$-equal subspaces of Euclidean space, were calculated by Dobrinskaya and Turchin. We calculate the homology and cohomology of two classes of more general spaces: decreasing polychromatic configuration spaces and bicolored configuration spaces. Instead of all points behaving similarly, we allow for varying behavior between points.
\end{abstract}

\section{Introduction}
For any topological space $X$, the $n^{th}$ no $k$-equal space of $X$ is the space of $n$ points on $X$ such that no $k$ of them are all equal. When $k=2$, these are configuration spaces. The study of no $k$-equal spaces started with work in complexity theory by Bj\"{o}rner, Lov\'{a}sz, and Yao \cite{BLY}. Further work by Bj\"{o}rner and Lov\'{a}sz begged the question: what are the Betti numbers of the no $k$-equal spaces of $\R$ \cite{BL}. Bj\"{o}rner and Welker answered this question \cite{BW}. However, their work gave no information on the cohomology rings. Further work by various authors determined the cohomology rings of the no $k$-equal spaces of $\R^2$ \cite{Yuz}, $\R$ \cite{YB2}, and, finally, $\R^d$ for all $d \geq 1$ \cite{DT}. Baryshnikov and Dobrinskaya-Turchin give explicit geometric representatives for homology.

We work with a generalization of no $k$-equal spaces: polychromatic configuration spaces. As with the no $k$-equal spaces, polychromatic configuration spaces of a topological space $X$ arise from points on $X$. Instead of removing all subspaces where $k$ points are equal, we remove subspaces corresponding to a set $I \subset \N^m$ for some number $m$. When $m=1$, these are the no $k$-equal spaces. Baryshnikov determined the generating function for the Euler characteristic of polychromatic configuration spaces whenever $X$ is a compact definable set in some o-minimal structure \cite{YB1}.

We compute the cohomology rings of two classes of polychromatic configuration spaces of $\R^d$: decreasing polychromatic configuration spaces and bicolored configuration spaces $(m=2)$. The rest of the paper is outlined as follows: in section \ref{prelim}, we give a rigorous definition of our spaces. We also define notation that will be used throughout the paper as well as a left action of the configuration spaces of $\R^d$ on the polychromatic configuration spaces of $\R^d$. In section \ref{config}, we review relevant background information regarding the homology of the configuration spaces of $n$ points in $\R^d$. In section \ref{decreasing}, we discuss decreasing polychromatic configuration spaces of $\R^d$. The homology and cohomology of these spaces are discussed in sections \ref{hom_1} and \ref{cohom_1}, respectively. Similar to Dobrinskaya and Turchin \cite{DT}, the cohomology ring is described as a space of forests. We use this description of the cohomology to prove that the generating set for homology found in section \ref{hom_1} is a basis. In section \ref{bicolored} we discuss bicolored configuration spaces of $\R^d$. The homology and cohomology of these spaces are discussed in sections \ref{hom_2} and \ref{cohom_2}, respectively. Again, the cohomology ring is described as a space of forests. In section \ref{tricolored}, we have a discussion of general polychromatic configuration spaces which are not decreasing. Finally, in section \ref{further}, we will consolidate all the unanswered questions discussed in this paper.

I would like to thank Yuliy Baryshnikov for extremely helpful conversations.

\section{Preliminaries} \label{prelim}

Throughout the paper, we include $0$ as a natural number.

\begin{defin}
Let $m \in \N^{>0}, I \subset \N^m$. $I$ is called \emph{ideal} if $(n_1, \ldots, n_m) \in I$ and $n'_i \leq n_i$ for all $i$, implies $(n'_1, \ldots, n'_m) \in I$
\end{defin}

Ideals will be used to describe the interaction between points of various types. For each ideal, $I$, let $\B_{I, d}(n_1, \ldots, n_m)$ denote the space of labeled discs, $n_i$ of color $i$, satisfying the following property: for all $(n_1, \ldots, n_m) \notin I$, any intersection containing $n_i$ discs of color $i$ for each $i$ is empty.

Let $\B_d$ denote the operad of little $d$-discs. There exists a left action of $\B_d$ on $\B_{I, d}$:
\begin{equation*}
\B_d(r) \times \B_{I, d}({\vec{n}_1}) \times \ldots \times \B_{I, d}(\vec{n}_r) \to \B_{I, d}(\vec{n}_1+\ldots + \vec{n}_r)
\end{equation*}

where the $i^{th}$ disc in $\B_d(r)$ is replaced by the configuration of discs from $\B_{I, d}(\vec{n}_i)$. One can also define a right action; however, it will not be necessary for this paper.

The K\"{u}nneth Theorem for homology gives a map from $H_* \B_d(r) \times H_* \B_{I, d}(\vec{n}_1) \times \ldots \times H_* \B_{I, d}(\vec{n}_r)$ to $H_* (\B_d(r) \times \B_{I, d} (\vec{n}_1) \times \ldots \times \B_{I, d}(\vec{n}_r))$. Combining this with the induced map on homology from the above action gives an action on homology groups
\begin{equation*}
H_* \B_d(r) \times H_* \B_{I, d}(\vec{n}_1) \times \ldots \times H_* \B_{I, d}(\vec{n}_r) \to H_* \B_{I, d}(\vec{n}_1+\ldots + \vec{n}_r)
\end{equation*}

Suppose $f: X \to \B_d (r)$ is a simplicial map representing $[\alpha] \in H_* \B_d(r)$ and $f_i : X_i \to B_{I, d} (\vec{n}_i)$ are simplicial maps representing $[\alpha_i] \in H_* \B_{I, d} (\vec{n}_i)$. Then $g : X \times X_1 \times \ldots \times X_r \to \B_{I, d} (\vec{n}_1 + \ldots + \vec{n}_r)$ given by $g(x, x_1, \ldots, x_r) = f(x) \cdot (f_1(x_1), \ldots, f_r(x_r))$ is a map representing $[\alpha] \cdot ([\alpha_1], \ldots, [\alpha_r])$ where $\cdot$ denotes the two actions described above.

The space $\B_{I, d}(n_1, \ldots n_m)$ is homotopy equivalent to a similar space of points.

\begin{defin}
Let $I \subset \N^m$ be an ideal. Let $\vec{n} = (n_1, \ldots, n_m)$. The $\vec{n}$ \emph{polychromatic configuration space} of $\R^d$ corresponding to $I$ is the space of labeled points, $n_i$ of color $i$ for all $i$, such that for all $(\ell_1, \ldots, \ell_m) \notin I$, any intersection containing $\ell_i$ points of color $i$ for all $i$ is empty. We denote this space by $\M(\vec{n})$.
\end{defin}

This space is the complement in $\R^{(n_1 + \ldots + n_m)d}$ to a linear subspace arrangement. We will denote the $i^{th}$ point of color $j$ by $x_i^j$.

\begin{lemma}
For all ideals $I \subset N^m$ and all $\vec{n} \in \N^m$, $\M(\vec{n})$ is homotopy equivalent to $\B_{I, d}(\vec{n})$.
\end{lemma}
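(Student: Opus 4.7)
The approach mirrors the classical proof that $\B_d(n) \simeq F(\R^d, n)$ via the center map, suitably adapted to the polychromatic setting. I would define the center map $c: \B_{I,d}(\vec n) \to \M(\vec n)$ that forgets radii and exhibit a homotopy inverse. The map $c$ is well defined: if its image violated the $\M$-condition, then some subset of centers with color-count $\vec\ell \notin I$ would coincide at a point $p$, and the corresponding discs would all contain $p$, contradicting membership in $\B_{I,d}(\vec n)$.

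For the inverse, I would construct a continuous section $s: \M(\vec n) \to \B_{I,d}(\vec n)$ by assigning a single common radius $\rho(\vec x)$ to all discs:
\[
\rho(\vec x) := \tfrac{1}{4} \min_{S \text{ forbidden}} \operatorname{diam}(S),
\]
where $S$ ranges over the (finitely many) subsets of the labeled points of $\vec x$ whose color-count vector lies outside $I$ (and $\rho \equiv 1$ if no such $S$ exists). The defining condition on $\M(\vec n)$ forbids the points of any such $S$ from all coinciding, so each diameter is strictly positive, making $\rho$ continuous and positive on $\M(\vec n)$. A short triangle-inequality check shows this common radius is small enough that no forbidden subset of discs can share a common point, so $s(\vec x) := (\vec x, \rho(\vec x) \cdot \vec 1)$ lies in $\B_{I,d}(\vec n)$ with $c \circ s = \mathrm{id}$.

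To show $s \circ c \simeq \mathrm{id}_{\B_{I,d}(\vec n)}$, I would exploit the downward-closedness of each fiber of $c$: if $(\vec x, \vec r)$ is a valid disc configuration and $\vec 0 < \vec r' \leq \vec r$ componentwise, then $(\vec x, \vec r')$ is also valid, since shrinking radii only shrinks intersections. The homotopy from $\mathrm{id}_{\B_{I,d}(\vec n)}$ to $s \circ c$ runs in two phases. First, linearly interpolate $\vec r$ to $\vec r^{\min} := \min(\vec r, \rho(\vec x) \cdot \vec 1)$; at every intermediate time the radius vector is $\leq \vec r$ componentwise, hence valid. Second, linearly interpolate $\vec r^{\min}$ to $\rho(\vec x) \cdot \vec 1$; here each coordinate is bounded above by $\rho(\vec x)$ throughout, so again valid by downward-closedness.

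The main obstacle is the construction of $\rho$. In the classical case ($m=1$ with $I = \{0,1\}$) all centers are distinct and one simply takes half the minimum pairwise distance. Here centers may coincide whenever permitted by $I$, so the naive "minimum pairwise distance" degenerates to zero; the correct adjustment is to minimize diameter only over subsets whose color-count is \emph{forbidden}. Positivity of $\rho$ is then exactly what the defining condition on $\M(\vec n)$ provides.
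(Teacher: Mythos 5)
Your proof is correct and follows essentially the same route as the paper, which simply asserts that the map sending a disc arrangement to its centers is a homotopy equivalence; your section $s$ built from the radius function $\rho$ and the two-stage radius-shrinking homotopy supply exactly the details left implicit there. (If one insists, as the operad action suggests, that the discs of $\B_{I,d}(\vec{n})$ lie inside a fixed unit disc, you would only need to precompose with a homeomorphism $\R^d \cong \mathring{D}^d$ and rescale $\rho$ accordingly, which does not affect the argument.)
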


A homotopy equivalence is given by taking the centers of the discs in the arrangement from $\B_{I, d}(\vec{n})$. Because $\M(\vec{n})$ is homotopy equivalent to $\B_{I, d}(\vec{n})$, the action of $H_* \B_d$ on $H_* \B_{I, d}$ gives an action of $H_* \mathcal{M}_d$ on $H_* \M$ where $\mathcal{M}_d(n)$ is the $n^{th}$ configuration space of $\R^d$.

As mentioned in the introduction, we will at times restrict to decreasing polychromatic configuration spaces.

\begin{defin}
Let $I \subset \N^m$ be an ideal. We call $I$ \emph{decreasing} if for all $i \leq m$, if $(n_1, \ldots, n_i, 0, \ldots 0) \notin I$, $n_i > 0$, and $(n_1, \ldots, n_{i-1}, n_{i} - 1, 0, \ldots 0) \in I$, then we have $(n_1, \ldots, n_j - 1, \ldots, n_i, 0, \ldots, 0) \in I$ for all $j < i$ with $n_j > 0$.

If $X$ is the polychromatic configuration space of a decreasing ideal, we call it \emph{decreasing}.
\end{defin}

The term decreasing comes from the function $f_I: \N^{m-1} \to \N \cup \{\infty\}$ defined by $f_I (n_1, \ldots, n_{m-1}) = \inf \{i | (n_1, \ldots, n_{m-1}, i) \in I \}$. The condition of $I$ being decreasing is equivalent to this function being strictly decreasing in each coordinate.

\begin{exam}
\begin{enumerate}

\item When $m = 1$, any ideal is decreasing. Thus, the no $k$-equal spaces are decreasing polychromatic configuration spaces.

\item Consider points of $m$ colors such that each color $c$ has an associated weight $w_c > 0$. Furthermore, we may assume that $w_{j-1} \geq w_{j}$ for all $j \leq m$. Let $M \in \R$. Let $Y \subset \R^{(n_1 + \ldots + n_m)d}$ be the space of arrangements of colored points in $\R^d$ satisfying the following property:
\begin{center}
for all $x \in \R^d$, $\displaystyle \sum_{(i, j) \in N_x} w_j < M$ where $N_x = \{(i, j) | x_i^j = x\}$
\end{center}

Then $Y$ is a decreasing polychromatic configuration space. $Y$ is a weighted analogue of no $k$-equal spaces which are obtained when all weights are one and $M = k$.
\end{enumerate}
\end{exam}

For each ideal, particular features are important.

\begin{defin}
Let $I$ be any ideal in $\N^m$. Call an $m$-tuple $\vec{n} \notin I$ \emph{critical} if $\vec{n} = (n_1, \ldots, n_i, 0, \ldots 0)$, $n_i > 0$ and for all $j \leq i$ with $n_j > 0$, $(n_1, \ldots, n_j - 1, \ldots, n_{i}, 0, \ldots 0) \in I$. Denote the set of critical $m$-tuples by $\C$. For each critical $m$-tuple, let its \emph{weight} be $\sum n_i$, denoted by $w_{\vec{n}}$.
\end{defin}

Before we state our main theorem regarding decreasing polychromatic configuration spaces, we have one more notational convention to define.

\begin{defin}
Let $\vec{e}_i$ denote the $m$-tuple with a one in the $i^{th}$ coordinate and zeros everywhere else. Let $E = \{\vec{e}_i, \ldots, \vec{e}_m$\}.
\end{defin}

Our theorem regarding decreasing polychromatic configuration spaces is as follows:

\begin{theorem}
\label{main}
The left module $H_* \M (\cdot)$ is generated by $H_0 \M (\vec{n})$ for $\vec{n} \in E$ and $H_{(w_{\vec{n}_c} - 1)d - 1} \M (\vec{n})$ for $\vec{n} \in \C$.
\end{theorem}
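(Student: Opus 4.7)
The plan is induction on the weight $|\vec{n}| = n_1 + \cdots + n_m$. The base cases $\vec{n} \in E$ and $\vec{n} \in \C$ are immediate from the statement. If $\vec{n} \in I$ then no sub-collision is forbidden, so $\M(\vec{n})$ is contractible, and its single generator in $H_0$ is realized by the operadic action of any point class in $\mathcal{M}_d(|\vec{n}|)$ on the basic $H_0$ classes of the $\M(\vec{e}_i)$.

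For the main case, assume $\vec{n} \notin I$ and $\vec{n} \notin \C \cup E$. Since $\vec{n}$ is not critical, there exists an index $j$ with $n_j > 0$ and $\vec{n} - \vec{e}_j \notin I$. I would first produce a deformation retract of $\M(\vec{n})$ onto a union of strata
\begin{equation*}
\M(\vec{n}) \simeq \bigcup_{\vec{n} = \vec{n}_1 + \cdots + \vec{n}_r,\; r \geq 2} \operatorname{Im}\bigl(\mathcal{M}_d(r) \times \M(\vec{n}_1) \times \cdots \times \M(\vec{n}_r) \to \M(\vec{n})\bigr),
\end{equation*}
where each stratum is obtained by forcing points into disjoint scale-separated clusters via a reparametrization of inter-point distances, and each $\vec{n}_\ell$ is strictly smaller than $\vec{n}$. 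A Mayer--Vietoris spectral sequence on this open cover reduces the problem to the Künneth pieces on the strata, which by the operadic action and the inductive hypothesis lie in the submodule generated by the listed classes.

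The decreasing hypothesis on $I$ enters exactly where we need to guarantee that the cluster decomposition closes up on $E \cup \C$: starting from any $\vec{n}' \notin I$, peeling off one coordinate at a time necessarily passes through a critical tuple before reaching $I$, so every sub-tuple appearing as a cluster multiplicity is already handled by a previous stage of the induction. The main obstacle is verifying that this spectral sequence captures every homology class — not merely the product classes visible on a single stratum — and that its $E_\infty$ page sits inside the submodule spanned by the listed generators. This is the analogue of the argument of Dobrinskaya--Turchin in the no $k$-equal case, and I would carry it out via the forest model for cohomology developed in the subsequent sections of the paper, where the pairing with cohomology is what ultimately pins down each homology class to the claimed operadic image.
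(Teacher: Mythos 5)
Your reduction has a genuine gap at its central step. The union of the images of the operadic maps $\mathcal{M}_d(r)\times \M(\vec{n}_1)\times\cdots\times\M(\vec{n}_r)\to\M(\vec{n})$ over decompositions with $r\geq 2$ is in fact all of $\M(\vec{n})$ (group the points of any configuration by location; since $\vec{n}\notin I$ there are at least two locations, and each group has multiplicity vector in $I$), so the ``deformation retract onto strata'' carries no content by itself. Everything is hidden in the unproved claim that a Mayer--Vietoris/nerve spectral sequence for this non-open, heavily overlapping cover has every class visible on single strata and $E_\infty$ contained in the submodule generated by the listed classes. You acknowledge this is the main obstacle and propose to settle it with the forest model and the intersection pairing, but in the paper the pairing argument (Theorem \ref{diag_proof}) is used to upgrade an already established generating set to a basis; it logically presupposes the generation statement (Theorem \ref{hom_proof}). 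Using it to prove generation would be circular unless you independently computed the ranks (e.g.\ via Goresky--MacPherson) and showed nondegeneracy of the pairing against an independently known spanning set of cocycles --- none of which appears in your sketch.

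Moreover, the place where you invoke the decreasing hypothesis is not where it is actually needed, and that is exactly where the danger lies. The statement is false for general ideals: for non-rectangular bicolored ideals one needs the extra generators $\{x_1,\ldots,x_{\ell_1+1},\{y_1,\ldots,y_{\ell_2+1}\}\}$ indexed by $\D$ (Theorem \ref{main_2}), and these arise from intersections of forbidden subspaces, a phenomenon your stratification scheme does nothing to exclude; as written, your argument would ``prove'' the same conclusion in that setting, where it fails. Your peeling claim is also not the right mechanism (even for decreasing ideals, decrementing an arbitrary positive coordinate out of $I^c$ into $I$ need not pass through a critical tuple; the decreasing condition only controls decrements of the last nonzero color). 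The paper's proof works at chain level: it pushes one point of the last color to infinity, excises tubular neighborhoods of the forbidden subspaces crossed by the resulting $(s+1)$-chain, and uses decreasingness precisely to guarantee that the links so produced are spheres indexed by critical tuples; the excised boundary pieces are then products of local classes with classes in an augmented space $\Mp$ carrying an extra ``cluster color'' (the $z$ points), and the induction is run over that augmented module. Your induction on $|\vec{n}|$ has no mechanism for these cluster points and no argument ruling out $\D$-type classes, so the key step of the proof is missing.
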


In the case where $m = 1$, this is exactly the theorem of Dobrinskaya and Turchin \cite{DT}. Moreover, it is in the same vein as said result in that each of the spaces $\M (\vec{n})$ for $\vec{n} \in \C$ are homotopy equivalent to spheres. In more general ideals, the above theorem does not hold. To show this fact, we will discuss the case where $m = 2$. To describe to homology for more general ideals, we need to denote more features of the ideal.

\begin{defin}
For each ideal $I \subset \N^2$, let $\D$ denote the set $\{ (n, m) \in I : (n+1, m), (n, m+1) \notin I \cup \C\}$. For each $(n_1, n_2) \in \D$, let its \emph{weight} be $n_1 + n_2$, denoted by $w_{(n_1, n_2)}$
\end{defin}

Before stating our theorem, we define a situation we avoid.

\begin{defin}
Let $I \subset \N^2$ be an ideal. We call $I$ \emph{rectangular} if there exists $m_1, m_2 \in \N \cup \{\infty\}$ such that $I = \{ (n_1, n_2) | 0 \leq n_1 \leq m_1, 0 \leq n_2 \leq m_2 \}$.
\end{defin}

Bicolored configuration spaces arising from rectangular ideals are simply products of two no $k$-equal spaces. Thus, their homology and cohomology can be computed using results on no $k$-equal spaces \cite{YB2, DT}.

Our theorem regarding bicolored configuration spaces is as follows:

\begin{theorem}
\label{main_2}
Let $I \subset \N^2$ be an ideal that is not rectangular. The left module $H_* \B_{I, d} (\cdot, \cdot)$ is generated by $H_0 \B_{I, d} (1, 0)$, $H_0 \B_{I, d} (0,1)$, $H_{(w_{(\ell_1, \ell_2)}-1)d-1} \B_{I, d}(\ell_1, \ell_2)$ for $(\ell_1, \ell_2) \in \C$, and $H_{(w_{(\ell_1, \ell_2)}+1)d-2} \B_{I, d}(\ell_1+1, \ell_2+1)$ for $(\ell_1, \ell_2) \in \D$.
\end{theorem}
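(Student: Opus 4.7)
The plan is to parallel the proof of Theorem \ref{main} but with additional care for the classes indexed by $\D$. There are three main steps: (i) exhibit explicit cycles representing each listed generator, (ii) establish a forest model for the cohomology that pins down the Betti numbers, and (iii) verify that the module generated by these cycles under the $H_* \mathcal{M}_d$ action saturates the homology via a pairing argument against the forest basis.

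For step (i), the $H_0$ classes in $\B_{I,d}(1,0)$ and $\B_{I,d}(0,1)$ are represented by single points of each color. For $(\ell_1, \ell_2) \in \C$, the critical sphere of dimension $(w_{(\ell_1,\ell_2)}-1)d-1$ is built just as in the decreasing case: the generator links the forbidden stratum where all $\ell_1 + \ell_2$ points coincide, using the standard Goresky--MacPherson description of a subspace-arrangement complement. For $(\ell_1,\ell_2) \in \D$, the claimed cycle of dimension $(w_{(\ell_1,\ell_2)}+1)d - 2$ must reflect the presence of the two forbidden neighbors $(\ell_1+1,\ell_2)$ and $(\ell_1,\ell_2+1)$. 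I would realize it as the operadic image, under the $\B_d(2)$-action, of a pair of near-critical clusters: all $\ell_1+1$ color-$1$ points driven toward one base point and all $\ell_2+1$ color-$2$ points driven toward a second base point, each contributing a critical-type sphere of dimension $\ell_1 d - 1$ and $\ell_2 d - 1$ respectively, with a $\B_d(2)$ factor of dimension $d-1$ between them. The total dimension $(\ell_1 d - 1) + (\ell_2 d - 1) + (d-1) + 1 = (\ell_1+\ell_2+1)d-2$ matches the stated degree, and the cycle is a priori non-trivial because $(\ell_1, \ell_2) \in I$ keeps the configuration admissible while neither $(\ell_1+1,\ell_2)$ nor $(\ell_1,\ell_2+1)$ is.

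For step (ii), I would extend the forest description developed for the decreasing case to the bicolored setting. Cohomology classes are encoded by bicolored forests whose internal vertices come in three distinct types: atomic $E$-vertices, $\C$-vertices (one shape per element of $\C$), and a new class of $\D$-vertices (one shape per element of $\D$) representing the two-corner obstruction. The degree of a forest is the sum of degrees of its vertices, matching the homological degrees of the corresponding generators. A rank check, either directly via Goresky--MacPherson on the linear arrangement or by induction on $\vec{n}$, matches these forest counts with the Betti numbers of $\M(\vec{n})$. The operadic action corresponds to grafting elementary single-vertex forests, so once the forest module is generated by one-vertex forests, the homological statement follows by Poincar\'e-type duality against the forest basis.

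The main obstacle is proving that the $\D$-classes are genuinely independent of the module generated by $E$ and $\C$, and that this is exactly where the non-rectangular hypothesis enters. In a rectangular ideal $\B_{I,d}$ splits as a product of two no-$k$-equal spaces, and what would have been $\D$-vertices decompose as external products of lower-weight $\C$-vertices coming from each coordinate. The non-rectangularity forbids this factorization, and I expect to detect this by isolating a specific forest type—say a tree with a single $\D$-vertex at its root—whose dual cycle is not in the image of grafting only $\C$-vertex forests. Making this distinction rigorous, and handling the combinatorial bookkeeping for the simultaneous action of $H_* \mathcal{M}_d$ on both coordinates, is the step I expect to be the most delicate.
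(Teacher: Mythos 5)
Your construction of the $\D$-generators does not work as stated. The configuration you describe---an $x$-cluster and a $y$-cluster collapsed near two \emph{distinct} base points joined by a $\B_d(2)$ factor---is precisely the bracket $[\{x_1,\ldots,x_{\ell_1+1}\},\{y_1,\ldots,y_{\ell_2+1}\}]$, and its dimension is $(\ell_1 d-1)+(\ell_2 d-1)+(d-1)=(w_{(\ell_1,\ell_2)}+1)d-3$, one less than the required $(w_{(\ell_1,\ell_2)}+1)d-2$; the ``$+1$'' you insert in the dimension count has no geometric source. The class that is actually needed is the \emph{nested} product of spheres $\{x_1,\ldots,x_{\ell_1+1},\{y_1,\ldots,y_{\ell_2+1}\}\}$: the points $x_1,\ldots,x_{\ell_1+1}$ together with an auxiliary center $c$ sweep a sphere of dimension $(\ell_1+1)d-1$, while the $y_j$ sweep a small sphere of dimension $\ell_2 d-1$ centered at $c$. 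This is where non-rectangularity enters concretely: one needs $(\ell_1+1,0)\in I$ (or symmetrically $(0,\ell_2+1)\in I$) for this product of spheres to lie in the space at all. Moreover, the two-cluster bracket you propose is not an independent new generator: relation 4 of Lemma \ref{hom_relations_2} expresses $[\{x_1,\ldots,x_{n+1}\},\{y_1,\ldots,y_m\}]$ as a sum of brackets of the nested $\D$-classes with single points, so taking it as the extra generator would misidentify the new phenomenon.

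The spanning step is also not established by your outline. You defer it to a ``rank check'' that forest counts match Betti numbers (via Goresky--MacPherson or induction) plus a pairing argument, but that count is exactly the hard content and is nowhere carried out; the actual proof avoids any independent Betti number computation by proving spanning directly (Theorem \ref{hom_proof_2}) and using the intersection pairing only afterwards, to upgrade the generating set to a basis (Theorem \ref{diag_proof_2}). More importantly, your plan does not engage the specific difficulty that distinguishes the bicolored case from the decreasing case: when a point is pushed off to infinity, the forbidden strata that must be excised are no longer pairwise disjoint, so one cannot simply remove disjoint tubular neighborhoods and read off organized boundary classes. This is why the proof enlarges the ideal to $\ell_I+3$ colors (the $z$ and $^iw$ points, one auxiliary color per element of $\D$) and runs an induction controlled by the complexity functions $g_0$ and $g_1$, culminating in Lemma \ref{main_lemma} and the four technical lemmas of the appendix. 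Without either that machinery or an honest independent computation of the Betti numbers, the proposal leaves generation unproven.
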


Just as the general $m = 2$ case is fundamentally different from the $m = 1$ case, we will conclude by highlighting some differences between the $m = 2$ and $m = 3$ cases.


\section{Homology of $\mathcal{M}_d$} \label{config}

In this section, we will give a brief overview of the homology of configuration spaces of $\R^d$, $\mathcal{M}_d$. For a more extensive look at $H_* \mathcal{M}_d$, I direct the reader to an expository paper written by Sinha \cite{Sinha}.

\begin{defin}[May \cite{May}]
Let $\mathcal{S}$ be a symmetric monoidal category with multiplication $\otimes$ and unit $\kappa$. An \emph{operad}, $\mathcal{C}$, over $\mathcal{S}$ consists of objects indexed by natural numbers: $\mathcal{C}(j)$, a unit map $\eta : \kappa \to \mathcal{C}(1)$, a right action by the symmetric group $S_j$ on $\mathcal{C}(j)$ for all $j$, and product maps:
\begin{equation*}
\mathcal{C}(k) \otimes \mathcal{C}(j_1) \otimes \ldots \otimes \mathcal{C}(j_k) \to \mathcal{C}(j_1 + \ldots + j_k)
\end{equation*}
These maps are required to satisfy associative, unital, and equivarience conditions.
\end{defin}

Intuitively, one thinks of $\mathcal{C}(n)$ as being the set of $n$-ary operations for some algebra. The product maps encode how to compose these operations. In order to define two operads that are of interest to us, we first must introduce algebras over operads.

\begin{defin}
Let $\mathcal{C}$ be an operad. An algebra over $\mathcal{C}$ is an object, $A$, together with maps
\begin{equation*}
\mathcal{C}(j) \otimes A^{j} \to A
\end{equation*}
that satisfy associative, unital, and equivarience conditions.
\end{defin}

Intuitively, $A$ is an algebra whose operations are encoded by $\mathcal{C}$.

\begin{defin}
The associative operad, $\mathrm{Assoc}$, is the operad whose algebras over it are monoids. The degree $d$ Poisson operad, $\mathrm{Pois_d}$, is the operad whose algebras over it are graded unital Poisson algebras with bracket degree $d$.
\end{defin}

\begin{theorem}[Cohen \cite{Cohen}]
For $d=1$, $H_* \mathcal{M}_d$ is $\mathrm{Assoc}$. For $d > 1$, $H_* \mathcal{M}_d$ is $\mathrm{Pois_{d-1}}$.
\end{theorem}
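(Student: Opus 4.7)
The plan is to handle the two cases separately. For $d=1$, $\mathcal{M}_1(n)$ is the space of $n$ distinct ordered points on $\R$, which deformation retracts onto the discrete set of $n!$ orderings of $\{1,\ldots,n\}$. Thus $H_*(\mathcal{M}_1(n))$ is concentrated in degree $0$ and equals $\Z[S_n] = \mathrm{Assoc}(n)$ as right $S_n$-modules. The remaining step is to check that the operadic composition agrees: inserting the $i$th disc of one ordered configuration into another corresponds exactly to the combinatorial composition of total orderings that defines $\mathrm{Assoc}$.

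For $d>1$, I would start with arity two. The map $(x_1,x_2)\mapsto (x_2-x_1)/|x_2-x_1|$ gives $\mathcal{M}_d(2)\simeq S^{d-1}$, so $H_*(\mathcal{M}_d(2))$ has two natural classes: a unit $\mu \in H_0$ and a fundamental class $\beta \in H_{d-1}$ of the linking sphere. Since $\mathcal{M}_d(2)$ is connected for $d\ge 2$, $\mu$ is automatically $S_2$-invariant and yields a commutative product, while $\beta$ picks up a sign $(-1)^d$ under the swap, the correct symmetry for a degree-$(d-1)$ Lie bracket. Together with $1 \in H_0(\mathcal{M}_d(1))$, these candidates furnish an operad map $\mathrm{Pois}_{d-1}\to H_*\mathcal{M}_d$.

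To verify the Poisson relations, all the work happens in arity three. Graded Jacobi for $\beta$ follows by dualising Arnold's classical identity $[\omega_{12}][\omega_{23}]+[\omega_{23}][\omega_{31}]+[\omega_{31}][\omega_{12}]=0$ in $H^*(\mathcal{M}_d(3))$. The Leibniz/Poisson compatibility, namely that $\beta$ is a graded derivation of $\mu$ in each slot, is the subtler axiom: I would verify it by constructing explicit cycle representatives in $\mathcal{M}_d(3)$, a small $(d-1)$-sphere linking two of the three points while the third sits far away, and checking that dragging the third point across the configuration produces a null-homologous difference.

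Finally, to see that the resulting operad map is an isomorphism, I would use the Fadell--Neuwirth fibration $\mathcal{M}_d(n+1)\to\mathcal{M}_d(n)$ with fibre $\R^d$ minus $n$ points, homotopy equivalent to $\bigvee^n S^{d-1}$. Induction on $n$, using Serre spectral sequence collapse for $d\ge 2$, recovers the additive structure of $H_*(\mathcal{M}_d(n))$ with total rank $n!$, matching the arity-$n$ rank of the free $\mathrm{Pois}_{d-1}$-operad on one generator; combined with surjectivity from the generation statement, this forces the map to be an isomorphism. The main obstacle I anticipate is the Poisson (Leibniz) relation, because it genuinely mixes the $H_0$ product with the $H_{d-1}$ bracket and demands a geometric argument about operadic insertion of a linked pair into a three-point configuration, rather than a purely additive computation.
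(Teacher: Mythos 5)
The paper does not actually prove this statement: it is quoted as background, attributed to Cohen, and the text only recalls afterwards that $\mathcal{M}_1(n)$ is a disjoint union of $n!$ contractible cells indexed by $S_n$ and that $\mathcal{M}_d(2)\simeq S^{d-1}$ carries the two classes $x_1\cdot x_2$ and $[x_1,x_2]$. So there is no in-paper argument to compare yours against; your sketch has to be measured against the standard proofs (Cohen, or the exposition of Sinha that the paper cites).

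Within that standard, your $d=1$ case and your construction of the map $\mathrm{Pois}_{d-1}\to H_*\mathcal{M}_d$ are fine: the antipodal swap on $S^{d-1}$ has degree $(-1)^d$, which is the correct symmetry for a degree-$(d-1)$ bracket, and the Jacobi and Leibniz checks in arity $3$ can in principle be carried out as you indicate (though ``dualising Arnold'' requires making the pairing of your sphere cycles with the classes $\omega_{ij}$ explicit rather than formal). The genuine gap is the final step: you appeal to ``surjectivity from the generation statement,'' but that generation statement is precisely the substantive content of Cohen's theorem, and nothing in your outline supplies it. The Fadell--Neuwirth fibration and Serre spectral sequence collapse give the additive structure (total rank $n!$, torsion-free), but they do not by themselves show that operadic composites of $\mu$ and $\beta$ exhaust $H_*\mathcal{M}_d(n)$: for that you must either identify the fiber classes of $\R^d$ minus $n$ points with the brackets $[x_i,x_{n+1}]$ and control the module structure over the base inductively (including the $\pi_1$-action on fiber homology when $d=2$, where the base is not simply connected), or exhibit explicit composite cycles and pair them against a cohomology basis, as in Sinha's treatment and as this paper does in its own setting via the forest classes. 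Your rank comparison also tacitly uses that $\mathrm{Pois}_{d-1}(n)$ is free abelian of rank $n!$, which needs a PBW-type basis count. With the generation step supplied, the surjection-between-free-groups-of-equal-rank argument does close; without it, the proof is incomplete.
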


In the case $d=1$, $\mathcal{M}_d (n)$ is homeomorphic to a disjoint union of $n!$ cells of dimension $n-1$. Thus, its only non-zero homology is in dimension zero. The contractible connected components of $\mathcal{M}_d(n)$ are indexed by elements of $S_n$. For $\sigma \in S_n$, a corresponding generator is any point in $\R^d$ such that for all $i, j \in \{1, \ldots, n\}$, if $\sigma(i) < \sigma(j)$, then $x_i < x_j$. Similarly, elements of Assoc are indexed by elements of $S_n$ thought of as describing in which order $n$ elements from the algebra are multiplied. It is not hard check that compositions are compatible. We write the element indexed by $\sigma \in S_n$ as $x_{\sigma(1)} \cdot \ldots \cdot x_{\sigma(n)}$.

Recall that the degree $d$ Poisson operad is generated by three elements: a nullary operation, $1$, and two binary operations, $[x_1, x_2]$ and $x_1 \cdot x_2$. For $d > 1$, $\mathcal{M}_d (d)$ is homotopy equivalent to $S^{d-1}$. Thus, we have non-zero homology in dimensions zero and $d-1$. These correspond to $x_1 \cdot x_2$ and $[x_1, x_2]$, respectively. The preferred generator of $\mathcal{M}_0 (0)$ corresponds to $1$. More concretely, a cycle representing $[x_1, x_2]$ is the $S^{d-1} \subset \mathcal{M}_d$ where $x_1, x_2$ are on the unit $(d-1)$-sphere and $x_1 = -x_2$. Recall, the elements of $\mathrm{Pois_{d-1}}$ satisfy Leibniz, Jacobi, and anti-symmetry relations. The Leibniz rule allows any element to be written such that all Lie multiplication occurs first. The Jacobi and anti-symmetry relations will be used later in this section to find a basis for $H_* \M$.

\section{Decreasing Polychromatic Configuration Spaces} \label{decreasing}

Throughout this section, we will assume that for all $\vec{n} = (n_1, \ldots, n_m)$ with $\sum_{i=1}^m n_i \leq 2$, we have $\vec{n} \in I$. This assumption is added only to avoid unnecessary complications. The same proofs only require small adjustments to go through if this assumption is not satisfied.

\subsection{Homology of $\M$} \label{hom_1}

Throughout this section, we will be concerned with homology with $\Z_2$ coefficients, ignoring the orientations of homology representatives. However, a generalization to $\Z$ coefficients is straightforward if one is careful with signs.

As is evident in the statement of Theorem \ref{main}, there is one class of non-trivial building blocks in $H_* \M$: elements from $H_{({w_{\vec{n}_c}} - 1)d - 1} \M (\vec{n})$ for $\vec{n} \in \C$.

Let $\vec{n} = (n_1, \ldots, n_m)$ be critical. Then $\M(\vec{n})$ is homotopy equivalent to $S^{(w_{\vec{n}}-1)d-1}$. This homotopy equivalence is given by retracting $\M(\vec{n})$ onto the sphere given by the equations:
\begin{align*}
\displaystyle \sum_{j=1}^{m} \sum_{i=1}^{n_j} x_i^j = 0 \\
\displaystyle\sum_{j=1}^{m} \sum_{i=1}^{n_j} |x_i^j|^2 = 1
\end{align*}
Thus, elements of $H_{(w_{\vec{n}_c} - 1)d - 1} \M (\vec{n})$ can be realized by spheres.

\begin{defin}
Denote the sphere described above by $\{x_1^1, \ldots, x_{n_1}^1, \ldots, x_{1}^{m}, \ldots, x_{n_m}^m \}$.
\end{defin}

To see that $\{x_1^1, \ldots, x_{n_1}^1, \ldots, x_{1}^{m}, \ldots, x_{n_m}^m \}$ is in fact non-trivial, consider the chain in $\M (\vec{n})$ given by the following equations:
\begin{align*}
x_1^1 &= x_i^j \text{ for all } j< m, i\leq n_j  \\
x_1^1 &= x_i^m \text{ for all } i <  n_m\\
(x_1^1)_1 &< (x^m_{n_m})_1 \\
(x_1)_{\ell} &= (x^m_{n_m})_{\ell}\text{ for all }\ell > 1
\end{align*}

Here $(z)_{\ell}$ denotes the $\ell^{th}$ coordinate of $z$. The boundary of this chain is in the complement to $\M(\vec{n})$ in $\R^{(n_1 + \ldots + n_m)d}$. Thus, it represents an element in $H^*(\M (\vec{n}), \Z_2)$. It is not hard to check that the intersection pairing between this element and $\{x_1^1, \ldots, x_{n_1}^1, \ldots, x_{1}^{m}, \ldots, x_{n_m}^m \}$ is non-zero.

\begin{defin}
Define \emph{local classes} to be classes of one of the following forms:
\begin{itemize}
\item $\{x_1^1, \ldots, x_{n_1}^1, \ldots, x_{1}^{m}, \ldots, x_{n_m}^m \} \in H_{({w_{\vec{n}_c}} - 1)d - 1} \M (\vec{n})$ for $\vec{n} \in \C$
\item $x_1^j \in H_0 \M(\vec{e}_j)$ for $\vec{e}_j \in E$
\end{itemize}
\end{defin}

The action of $H_* \mathcal{M}_d$ on $H_* \M$ is very similar to the action of $H_* \mathcal{M}_d$ on itself. That is, if $B_1$ and $B_2$ are two elements of $H_* \M$, a representative for $[B_1, B_2]$ is given by considering a representative for $[x_1, x_2]$ and replacing $x_i$ with sufficiently scaled representatives of $B_i$. We will show that all homology classes of $H_* \M$ can be built up using the left action of $\mathcal{M}_d$ on local classes.


Our proof will follow very similarly to that of Dobrinskaya and Turchin \cite{DT}. As with their proof, our proof will use a more general space. Consider the ideal $I' \subset \N^{m+1}$ consisting of the following $(m+1)$-tuples:
\begin{itemize}
\item $(n_1, \ldots, n_m, 0)$ for $(n_1, \ldots, n_m) \in I$ (We will denote such tuples by $(\vec{n}, 0)$)
\item $(0, \ldots, 0, 1)$
\end{itemize}

To emphasize the importance of points of color $m+1$, we will denote them by $z_i$ rather than $x_i^{m+1}$.

\begin{defin}
Define \emph{augmented local classes} to be classes of one of the following forms:
\begin{itemize}
\item $\{x_1^1, \ldots, x_{n_1}^1, \ldots, x_{1}^{m}, \ldots, x_{n_m}^m \} \in H_{({w_{\vec{n}_c}} - 1)d - 1} \Mp (\vec{n}, 0)$ for $\vec{n} \in \C$
\item $x_1^j \in H_0 \Mp(\vec{e}_j)$ for $\vec{e}_j \in E$
\item $z_1 \in H_0 \Mp(\vec{e}_{m+1})$
\end{itemize}
\end{defin}

We will prove the following:

\begin{theorem}
\label{hom_proof}
For all $m \geq 1$ and all decreasing ideals $I \subset \N^m$, the left module $H_* \Mp (\cdot, \ldots, \cdot)$ is generated by augmented local classes.
\end{theorem}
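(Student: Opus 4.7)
The plan is to induct on the total number of points $N := n_1 + \cdots + n_m + k$. The base cases $N \leq 2$ are immediate from the standing assumption that every tuple of weight at most $2$ lies in $I$: the space $\Mp(\vec{n}, k)$ is then contractible or a wedge of $(d-1)$-spheres, and the listed augmented local classes already form a complete generating set.

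For the inductive step with $N \geq 3$, I would use a Fadell--Neuwirth-type projection $\pi$ that forgets one carefully chosen point. If $k \geq 1$, forget the last $z$-point to obtain $\pi : \Mp(\vec{n}, k) \to \Mp(\vec{n}, k-1)$. If $k = 0$ and $\vec{n} \in \C$, then $\M(\vec{n})$ is already homotopy equivalent to a sphere whose generator is a local class by definition, so there is nothing to prove; otherwise let $j$ be the maximal index with $n_j > 0$ and forget one $x^j$-point to obtain $\pi : \M(\vec{n}) \to \M(\vec{n} - \vec{e}_j)$. Over a configuration $C$ in the base, the fibre of $\pi$ is $\R^d$ minus the finite set $F(C)$ of positions at which inserting the forgotten point would create a forbidden collision. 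Combining the decreasing hypothesis on $I$ with the maximality of $j$, each simple point $q \in F(C)$ corresponds to a critical tuple $\vec{\ell}(q) + \vec{e}_j \in \C$ (in the $x$-case) or to an augmented critical tuple in $\Cp$ (in the $z$-case), so the fibre $(d-1)$-sphere around $q$ is naturally identified with the operad action of a single critical local class on the class of $C$.

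The central step is to show that each fibre sphere class lifts globally to $H_* \Mp(\vec{n}, k)$ via the left action of $H_* \mathcal{M}_d$ described in section \ref{prelim}: the lift places a shrunken copy of the critical local class for $\vec{\ell}(q) + \vec{e}_j$ inside a small disc around $q$, with the remaining points of $C$ placed outside, exactly matching the composition formula spelled out for the operad action. Once such global lifts are in hand, the Serre spectral sequence of $\pi$ degenerates in Leray--Hirsch fashion, and every class of $H_* \Mp(\vec{n}, k)$ factors as an operadic product of a fibre-class generator with a base class; the inductive hypothesis applied to $\Mp(\vec{n}, k-1)$ or $\M(\vec{n} - \vec{e}_j)$ then finishes the argument.

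The main obstacle I expect is that $\pi$ is not locally trivial over the whole base: deeper collision strata contribute fewer forbidden positions, and the clean identification of fibre spheres with individual critical local classes can fail when $q$ is the site of a multi-point collision. To handle this I would stratify the base by collision type and run a secondary filtration (or Mayer--Vietoris) argument, showing that the homology contributed by each deeper stratum is already expressible as an iterated operadic product of critical local classes arising from lighter strata. The decreasing property of $I$ is precisely what controls this analysis, forcing every collision multiset that can appear in $\M(\vec{n}-\vec{e}_j)$ to be a sub-tuple of a critical tuple in $\C$ and so keeping the induction inside the claimed generating set.
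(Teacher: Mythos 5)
Your plan has a genuine gap at its central step. The forgetful map $\pi$ you propose is not a fibration, and the failure is not a technicality to be patched afterwards: for a critical tuple of weight at least $3$, the fibre of $\pi$ over a \emph{generic} configuration is all of $\R^d$ (no forbidden positions at all), hence contractible, and the punctured fibres you want to feed into a Leray--Hirsch argument only appear over positive-codimension collision strata of the base. So there is no Serre spectral sequence for $\pi$, and even the Leray spectral sequence has non-locally-constant direct image sheaves; in particular there can be no family of ``fibre sphere classes'' restricting to a basis of the homology of every fibre, which is what a Leray--Hirsch degeneration would require. The lifts you describe (a shrunken critical local class placed in a small disc around $q$, the rest of $C$ outside) do exist as operadic product classes in $H_*\Mp(\vec{n},k)$, but the assertion that together with base classes they span is exactly the theorem, and it does not follow from any degeneration statement. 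Your proposed remedy --- stratify by collision type and run a secondary filtration --- is precisely where all the content lies, and it is not carried out; moreover your claim that every collision multiset in the base is a sub-tuple of a critical tuple is not the relevant (or a correct) formulation. The fact the decreasing hypothesis actually gives is: if a point of the \emph{maximal} color $m$ joins a cluster whose type $(\ell_1,\ldots,\ell_m)$ lies in $I$ and the result leaves $I$, then the resulting tuple is critical, because removing any other color present returns you to $I$ by the defining property of decreasing ideals.

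The paper avoids the fibration picture entirely. It runs a double induction (on $m$, and for fixed $m$ on $n_m$) and, for a closed chain $\gamma$, translates the single coordinate $x^m_{n_m}$ to infinity; the swept-out $(s+1)$-chain $\Gamma$ meets only two kinds of forbidden subspaces (collision of $x^m_{n_m}$ with an allowed cluster, which is critical by the observation above, or with a $z$-point), and removing small tubular neighborhoods of these exhibits $[\gamma]$ as a sum of the end class $N\cdot x^m_{n_m}$ and link classes of the strata. The essential device your proposal lacks is the auxiliary color $m+1$ (the $z$-points) together with the substitution $N|_{z=A}$: when a cluster forms, the boundary of its tubular neighborhood is recorded as a class in $\Mp$ with one more $z$-point and fewer $x$-points, which is organized by the inductive hypothesis. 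Without this bookkeeping your induction on strata has no smaller instance of the theorem to hand the deeper collision types to, so even granting a stratified analysis of $\pi$, the argument as written does not close up. If you want to salvage your outline, you would in effect reconstruct the paper's tubular-neighborhood computation stratum by stratum, at which point the spectral-sequence framing adds nothing.
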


As a corollary of this theorem, we get Theorem \ref{main}. For convenience, we define the following:

\begin{defin}
Call a class \emph{organized} if it can be written as a sum of products of augmented local classes.
\end{defin}

Before proving Theorem \ref{hom_proof}, we define some notation that we will use.

\begin{defin}
For any $N \in H_* \Mp(\vec{n})$, let $N|_{a = A}$ be the class in $H_* \M(\vec{n}')$ given by substituting $A$ for $a$ where $a$ is some $z$ coordinate and $A$ is some element in $H_* \M$.
\end{defin}

For example, $[x_1^1, z_1] |_{z_1 = \{x_1^2, x_2^2, x_3^2 \}}$ is the class $[x^1_1, \{x_1^2, x_2^2, x_3^2\}]$.

We now prove Theorem \ref{hom_proof}

\begin{proof}
The proof will be by induction on $m$. The case $m = 1$ was done by Baryshnikov for $d = 1$ \cite{YB2} and Dobrinskaya and Turchin for $d > 1$ \cite{DT}.

Suppose $m > 1$ and that the claim holds for all $m' < m$. Let $I$ be a decreasing ideal in $\N^m$. We will show that for all $(n_1, \ldots, n_{m+1})$, organized classes span $H_* \Mp(n_1, \ldots, n_{m+1})$. This will be done by induction on $n_m$. First suppose $n_m = 0$. Then this space is homeomorphic to $\mathcal{M}_{J', d} (n_1, \ldots, n_{m-1}, n_{m+1})$ for the decreasing ideal $J \subset \N^{m-1}$ given by $(\ell_1, \ldots, \ell_{m-1}) \in J$ if and only if $(\ell_1, \ldots, \ell_{m-1}, 0) \in I$. All organized classes in $\mathcal{M}_{J', d} (n_1, \ldots, n_{m-1}, n_{m+1})$ are also organized in $H_* \Mp(n_1, \ldots, n_{m+1})$. Thus, the claim holds when $n_m = 0$.

Now suppose $n_m > 0$ and that the claim holds whenever $n'_m < n_m$.  Let $\gamma$ be a closed $s$-chain in $\Mp(n_1, n_2, n_3, \ldots, n_{m+1})$. Consider the homotopy of $\gamma$ affecting only the $x^{m}_{n_m}$ coordinate, $\gamma_t = \gamma + v \cdot t$ where $v$ is a vector that is non-zero only in the $x^m_{n_m}$ coordinate. For large enough $t$, say $t=M$, the $x^m_{n_m}$ coordinate is always far away from all other points. Call the $(s+1)$-chain given by this homotopy $\Gamma$. $\Gamma$ may not be a chain in $\Mp(n_1, n_2, \ldots, n_{m + 1})$. It may intersect forbidden subspaces of the forms:

\begin{align*}
x_{n_m}^m = x_i^j\text{ for all }j \leq m, i \in J_j\text{ where }|J_j| &= \ell_j\text{ for some }(\ell_1, \ldots, \ell_{m-1}, \ell_m + 1)\in \C \\
x_{n_m}^m &= z_j
\end{align*}

In the first case, remove a sufficiently small tubular neighborhood. The intersection of $\Gamma$ with the boundary of this neighborhood is $N|_{z_{(n_{m+1} + 1)} = \{ x_{i_{1, 1}}^1, \ldots, x_{i_{1, \ell_1}}^1, \ldots, x_{i_{m, 1}}^m, \ldots, x_{i_{m, \ell_{m}}}^{m}, x_{n_m}^m \}}$ where $i_{j, 1}, \ldots, i_{j, \ell_j}$ is an enumeration of $J_j$ and $N \in H_*\Mp(n_1 - \ell_1, \ldots, n_{m} - \ell_{m}, n_{m+1}+1)$.

In the second case, again remove a sufficiently small tubular neighborhood. The intersection of $\Gamma$ with the boundary of this neighborhood is $N|_{z_{j} = [ x_{n_m}^m, z_j]}$ where $N \in H_*\Mp(n_1, \ldots, n_{m-1}, n_{m}-1, n_{m+1})$.

For $t=M$, we have a class $N \cdot x_{n_m}^m$ where $N \in H_*\Mp(n_1, \ldots, n_{m-1}, n_{m} - 1, n_{m+1})$.

In each of these cases, the resultant classes are organized. Thus, $\Gamma$ with its intersection with these tubular neighborhoods removed gives a relation which allows $[\gamma]$ to be written as a sum of organized classes. Thus, for all $(n_1, \ldots, n_{m+1})$, organized classes span $H_* \Mp(n_1, \ldots, n_{m+1})$.

Thus, Theorem \ref{hom_proof} holds.
\end{proof}

Theorem \ref{hom_proof} produces a generating set for $H_*\M(\vec{n})$; we would like a basis. For this, relations between various elements in the generating set are needed. Some of the terms shown may be zero depending on $I$.

\begin{lemma}
\label{hom_relations}
Let $\vec{\ell} = (\ell_1, \ldots, \ell_k, 0, \ldots, 0) \in \N^m$, $\ell_k > 0$ be such that $(\ell_1, \ldots, \ell_k - 1, 0, \ldots, 0) \in \C$ Let $d > 1$. Let $J = \{i | \vec{\ell} - \vec{e}_i \in \C \}$. Then the elements of $H_* ( \M(n_1, n_2), \Z_2)$ satisfy the following relation:

\begin{align*}
\displaystyle \sum_{j \in J} \sum_{i=1}^{\ell_j} [\{x_1^1, \ldots, x_{\ell_1}^1, x_{1}^{j}, \ldots, \hat{x}_i^j, \ldots, x_{\ell_j}^j, x_{1}^k, \ldots, x_{\ell_k}^k\}, x_i^j] = 0
\end{align*}

\end{lemma}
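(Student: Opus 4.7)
The plan is to realize the relation as the boundary of an explicit chain in $\M(\vec{\ell})$. Let $n = \sum_i \ell_i$ and let $\Sigma \subset \R^{nd}$ be the ``overshoot'' sphere cut out by $\sum_{j,i} x_i^j = 0$ and $\sum_{j,i} |x_i^j|^2 = 1$; this is a closed manifold of dimension $(n-1)d-1$. Since $\vec{\ell} - \vec{e}_k \in \C$, we have $\vec{\ell} \notin I$, so $\Sigma$ meets the forbidden subspaces of $\M(\vec{\ell})$, which are precisely the critical linear loci $L_{\vec{n}',S}$ indexed by a critical $\vec{n}' \le \vec{\ell}$ and a choice $S$ of which $n'_i$ points of color $i$ are to coincide. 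Deleting open tubular neighborhoods of each $\Sigma \cap L_{\vec{n}',S}$ yields a chain $\widetilde{\Sigma} \subset \M(\vec{\ell})$ whose boundary is the sum of the resulting link spheres; because $\Sigma$ was closed in $\R^{nd}$, this total sum represents $0$ in $H_{(n-1)d-2}(\M(\vec{\ell});\Z_2)$.

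Next I would identify the contribution from the ``top'' critical tuples $\vec{n}' = \vec{\ell} - \vec{e}_j$ for $j \in J$. Here $w_{\vec{n}'} = n-1$, and transversality plus direct parametrization identify $\Sigma \cap L_{\vec{n}',S}$ with a copy of $S^{d-1}$, namely the ``bracket sphere'' where the cluster sits at $y$ and the outside point $x_i^j$ sits at $-(n-1)y$ with $|y|$ determined by the unit-norm condition, while the unit normal sphere in $\Sigma$ has dimension $(n-2)d-1$ and, by direct comparison with the defining equations from Section \ref{hom_1}, realizes the local sphere class $\{x_1^1, \ldots, \hat{x}_i^j, \ldots\} \in H_{(n-2)d-1}\M(\vec{\ell}-\vec{e}_j)$. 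Their product over the tubular boundary therefore represents exactly $[\{x_1^1,\ldots,\hat{x}_i^j,\ldots\}, x_i^j]$, and summing over $j \in J$ and $i = 1, \ldots, \ell_j$ reproduces the left-hand side of the claimed relation.

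The main obstacle is the contribution from strictly smaller critical subspaces $\vec{n}' < \vec{\ell}$ with $r := n - w_{\vec{n}'} \ge 2$. For such $\vec{n}'$, the base $\Sigma \cap L_{\vec{n}',S}$ has dimension $rd-1$, and after collapsing the cluster to a single object it lies in the free configuration space $\mathcal{M}_d(r+1)$. By Cohen's theorem recalled in Section \ref{config}, the top non-zero homology of $\mathcal{M}_d(r+1)$ for $d>1$ sits in degree $r(d-1)$, and $rd-1 > r(d-1)$ whenever $r \ge 2$. Thus the base is null-homologous in $\mathcal{M}_d(r+1)$, and applying the operadic action of any null-bordism to the cluster's local sphere class produces a chain in $\M(\vec{\ell})$ bounding the tubular-boundary piece, so this piece contributes nothing to the sum. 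One technical point, that different $L_{\vec{n}',S}$ may meet one another, is handled by an inductive nested-neighborhood bookkeeping on $w_{\vec{n}'}$: strictly smaller critical subspaces have strictly higher codimension in $\Sigma$, so the decomposition of $\partial \widetilde{\Sigma}$ into pieces indexed by $(\vec{n}', S)$ is well-defined up to lower-order terms that are themselves killed by the same dimension argument. Combining the two halves gives the stated relation in $H_{(n-1)d-2}(\M(\vec{\ell});\Z_2)$.
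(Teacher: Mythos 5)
Your construction is the same as the paper's: the sphere $\Sigma$ cut out by $\sum x_i^j=0$, $\sum|x_i^j|^2=1$, with tubular neighborhoods of the forbidden loci removed, and your identification of the links of the $(n-1)$-fold coincidence strata with the brackets $[\{x_1^1,\ldots,\hat{x}_i^j,\ldots\},x_i^j]$ is the intended one. The gap is in your treatment of the deeper strata. The dimension argument --- that for a critical $\vec{n}'$ with $r=n-w_{\vec{n}'}\geq 2$ the base $\Sigma\cap L_{\vec{n}',S}$ ``lies in $\mathcal{M}_d(r+1)$'' after collapsing the cluster and is null-homologous there because $rd-1>r(d-1)$ --- is not valid. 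That base is not contained in the free configuration space of $r+1$ objects: on it the remaining $r$ points may collide with one another and with the cluster, and the relevant ambient space is a polychromatic configuration space whose homology in degree $rd-1$ need not vanish. The paper's own Lemma \ref{hom_relations_2}, relation 3(a), refutes the principle you invoke: there the stratum where all $n+1$ points of one color coincide has $r=m\geq 2$ leftover points, and its link contributes the nonzero class $\{y_1,\ldots,y_m,\{x_1,\ldots,x_{n+1}\}\}$, not zero. Tellingly, your argument never uses the standing hypothesis of Section \ref{decreasing} that $I$ is decreasing, yet the relation is false for general ideals precisely because of such contributions; so this step cannot be saved by the ``nested-neighborhood bookkeeping'' you sketch.

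What actually makes the lemma true, and what the paper's one-line proof implicitly relies on, is that for decreasing $I$ the deeper strata do not occur at all. If $\vec{\ell}-\vec{e}_a\in I$ for a color $a$, then every smaller tuple is in $I$ by the ideal property; if $\vec{\ell}-\vec{e}_a\notin I$, then its $k$-decrement $\vec{\ell}-\vec{e}_a-\vec{e}_k$ lies in $I$ by criticality of $\vec{\ell}-\vec{e}_k$, and the decreasing condition then forces every other decrement $\vec{\ell}-\vec{e}_a-\vec{e}_b$ into $I$ as well; in particular $\vec{\ell}-\vec{e}_a$ is itself critical, so $a\in J$. Hence every $\vec{n}'\leq\vec{\ell}$ of deficiency at least two lies in $I$, the non-$\M(\vec{\ell})$ locus on $\Sigma$ is exactly the disjoint union of the $(d-1)$-spheres indexed by $j\in J$ and $i\leq\ell_j$ (two distinct $(n-1)$-fold coincidences would force all $n$ points together, which is impossible on $\Sigma$), and removing their tubular neighborhoods yields the stated relation with no residual terms to dispose of. Replacing your dimension count with this observation completes the proof.
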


\begin{proof}

\item Consider the sphere, $S$, given by the following equations:
\begin{align*}
\displaystyle \sum_{j=1}^k \sum_{i=1}^{\ell_k} x_i^j = 0 \\
\displaystyle \sum_{j=1}^k \sum_{i=1}^{\ell_k} |x_i^j|^2 = 1
\end{align*}
Remove from $S$ tubular neighborhoods of points on $S$ that are not in $\M$. This gives the above relation.
\end{proof}

Using these relations, along with the Jacobi and anti-symmetry relations from $H_* \mathcal{M}_d$, we can find a smaller generating set for $H_*\M(\vec{n})$.

\begin{theorem}
\label{hom_basis}
For all $d>1, \vec{n} \in \N^m$, let $S$ be the set of elements of $H_*\M(\vec{n})$ that can be written as a product where each factor is an $x_i^j$ or of the form:
\begin{equation}
[ \ldots [ [B_1, B_2], B_3] \ldots B_{\ell}],  \ell \geq 1
\end{equation}
where each $B_s$ is of the following form:
\begin{equation*}
[ \ldots [ [ \ldots [\{ x_{i_{1, 1}}^1, \ldots, x_{i_{1, \ell_1}}^1, \ldots, x_{i_{k, 1}}^k, \ldots, x_{i_{k, \ell_{k}}}^{k}\}, x_{r_{1,1}}] \ldots x_{r_{1, s_1}} ],\ldots x_{r_{k, 1}}] \ldots x_{r_{k, s_k}}]
\end{equation*}
where $(\ell_1, \ldots, \ell_k, 0, \ldots, 0) \in \C, i_{j,1} < \ldots < i_{j, \ell_j}, r_{j,1} < \ldots < r_{j, s_j}$. Furthermore, if $s_k > 0$, then $i_{k, \ell_k} > r_{k, s_k}$.

Additionally, we require that the smallest $x^1$ index in $B_1, \ldots, B_{\ell}$ be in $B_1$. Then $S$ is a generating set for $H_*\M(\vec{n})$.
\end{theorem}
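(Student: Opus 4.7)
The plan is to start from the generating set produced by Theorem \ref{hom_proof} --- arbitrary products of iterated Lie brackets of local classes --- and whittle it down to $S$ using the operadic relations in $\mathrm{Pois}_{d-1}$ (Leibniz, Jacobi, antisymmetry) together with the relations of Lemma \ref{hom_relations}. The argument parallels the Dobrinskaya--Turchin reduction in the monochromatic case.

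First, by the Leibniz rule, I would push all Lie brackets inside each product factor so that every class becomes a sum of products $F_1\cdots F_p$ in which each $F_q$ is either a single point $x_i^j$ or a purely Lie expression in local classes and points. Inside each such Lie factor, Jacobi lets me impose the left-normed form $[\ldots[[B_1,B_2],B_3]\ldots,B_\ell]$, in which each $B_s$ is itself left-normed and built from local classes and individual points. A further use of Jacobi inside each $B_s$ arranges that exactly one local class $\{\ldots\}$ sits at the innermost slot of $B_s$: any additional local class appearing higher up is extracted by a Jacobi rewrite and promoted to its own outer factor $B_{s'}$, possibly enlarging $\ell$. After this step, each $B_s$ has the form $[\ldots[\{\ldots\},y_1],\ldots,y_t]$ with each $y_i$ a single point.

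Next I would impose the ordering conditions via antisymmetry. Inside each $B_s$, Jacobi rearranges the bracketed points $y_i$ so that all color-$1$ points come first, then color-$2$, and so on through color-$k$; antisymmetry (here with $\Z_2$-coefficients) then orders $r_{j,1}<\cdots<r_{j,s_j}$ within each color. The correction terms produced by Jacobi during these swaps involve brackets of pairs of single points; these vanish because our standing assumption that all $\vec{n}$ with $\sum n_i\le 2$ lie in $I$ makes the relevant two-point spaces contractible. The indices inside the local class $\{\ldots\}$ are symmetric in their arguments by construction of the sphere representative, so $i_{j,1}<\cdots<i_{j,\ell_j}$ is automatic. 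The global condition that the smallest $x^1$-index among $B_1,\ldots,B_\ell$ lies in $B_1$ is then enforced by antisymmetry on the outermost bracket and Jacobi on the outer Lie word, a standard Lyndon-type normalization.

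The remaining condition, $i_{k,\ell_k}>r_{k,s_k}$, is precisely where Lemma \ref{hom_relations} intervenes. Applied to $\vec{\ell}=(\ell_1,\ldots,\ell_k,0,\ldots,0)$, the lemma produces a relation among terms $[\{\ldots,\hat{x}^j_i,\ldots\},x^j_i]$ for $j\in J$; the summand at $j=k$ exists because $\vec{\ell}-\vec{e}_k\in\C$ forces $k\in J$. Solving this relation for the term in which the bracketed-out color-$k$ index is larger than every color-$k$ index remaining in the sphere expresses that term as a sum of other terms, each of which either satisfies $i_{k,\ell_k}>r_{k,s_k}$ or brackets out an index of some color $j<k$ (which is vacuously compatible with the color-$k$ condition). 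Iterating this substitution, accompanied by an induction on the outermost color-$k$ index $r_{k,s_k}$, reduces every factor to an element of $S$. The main obstacle will be exactly this last step: one must verify that reapplying the earlier Jacobi and antisymmetry moves, needed to restore normal form after each use of Lemma \ref{hom_relations}, strictly decreases a well-founded statistic (so that the rewriting terminates), and that the decreasing hypothesis on $I$ is used correctly to supply the required critical tuples at every stage.
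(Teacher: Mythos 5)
Your proposal takes essentially the same route as the paper: start from the generators of Theorem \ref{hom_proof} and rewrite them into the stated normal form using Jacobi and antisymmetry, the vanishing of brackets of two singleton points (which is exactly where the standing assumption $\sum n_i\le 2 \Rightarrow \vec{n}\in I$ enters), and the relations of Lemma \ref{hom_relations} to enforce the condition $i_{k,\ell_k}>r_{k,s_k}$. The paper organizes this as an induction on the number of items in a Lie factor with a three-case analysis, and the termination issue you flag is handled there by ordering coordinates color-by-color and observing that the last bracketed coordinate strictly decreases (the Jacobi correction terms being zero), so your staged rewriting is the same argument in slightly different bookkeeping.
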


\begin{proof}
Throughout this proof, \emph{items} will refer to either a set of curly brackets or a singleton coordinate not in any curly brackets. Recall, as mentioned at the beginning of this section, we may assume that all multiplication occurs outside of Lie brackets. 

If an element of $\alpha \in H_* \M(\vec{n})$ has no Lie brackets, then it is already in the desired form. Thus, we may assume it has Lie brackets. Consider one Lie bracket factor, $F$. The proof will follow by induction on the number of items in $F$. A small case analysis gives that if $F$ contains at most $3$ items, then it can be expressed in the desired form. Thus, suppose it contains $n$ items where $n>3$.  We may write $F = [F_1, F_2]$. There are a three cases.

\textbf{Case 1: $F_1$ and $F_2$ each have at least $2$ items}: In this case, inductively, $F_1$ and $F_2$ can be expressed in the desired form. Thus, $F = [ [ \ldots [ [B_1, B_2], B_3] \ldots B_{\ell}], [ \ldots [ [B'_1, B'_2], B'_3] \ldots B'_{\ell'}] ]$. Without loss of generality, we can assume the smallest $x^1$ index is in $B_1$. Using the Jacobi and anti-symmetry relations, we may write $F$ as
\begin{equation*}
[ [ F_1 , B'_{\ell'}], [ \ldots [ [B'_1, B'_2], B'_3] \ldots B'_{\ell'-1}] ] + [ [F_1, [ \ldots [ [B'_1, B'_2], B'_3], \ldots B'_{\ell'-1}] ],  B'_{\ell'}]
\end{equation*}
In the first summand, we reduced the number of $B_i$ blocks on the right side of the outer most Lie bracket. The second summand can be expressed as $[F', B'_{\ell'}]$ where $F'$ has fewer items than $F$. Thus, inductively $F'$ can be written in the desired form. Thus, continuing this procedure, we may write $F$ in the desired form.

\textbf{Case 2: $F_2$ is a curly bracket}: Inductively, $F_1$ can be expressed in the desired form. Thus, $F$ is written in the form $[ [ \ldots [ [B_1, B_2], B_3] \ldots B_{\ell}], B_{\ell+1} ]$ where $B_{\ell+1} = F_2$. It may happen that the least $x^1$ index is in $B_{\ell+1}$. If this is the case, $F$ may be expressed as:
\begin{equation*}
[ [ \ldots [ [B_1, B_2], B_3] \ldots B_{\ell-1}], [B_{\ell}, B_{\ell+1} ] ] + [ [ [ \ldots [ [B_1, B_2], B_3] \ldots B_{\ell-1}], B_{\ell+1}], B_{\ell} ]
\end{equation*}
The first summand can be treated as case 1. The second summand can either be treated as case 1 or as case 2 where the smallest $x^1$ index is not in $F_2$.

\textbf{Case 3: $F_2$ is a single $x_i^j$}: Inductively, we may write $F_1$ in the desired form. There are now two subcases: either $F_1$ contains a single $B$ block or it contains multiple. In the first case, we may write $F$ as:
\begin{equation*}
[ [F'_1, x_i^j], B_{\ell}] + [ [B_{\ell}, x_i^j], F'_1 ]
\end{equation*}
where $F'_1 = [ \ldots [ [B_1, B_2], B_3] \ldots B_{\ell-1}]$. Both of these summands can be treated by previous cases.

Thus, we may suppose $F_1$ contains only a single $B$ block. That is, $F$ is of the form:
\begin{equation*}
[ [ \ldots [F', x^1_{i_{1, 1}}] \ldots x^k_{i_{k,s_k}}], x_i^j]
\end{equation*}
where $F'$ is some curly bracket expression. If this is not in the desired form, $F$ may be expressed as:

\begin{equation*}
[ [ [ \ldots [F', x^1_{i_{1, 1}}] \ldots, x^k_{i_{k, s_k - 1}}], x_i^j], x^k_{i_{k,s_k}}] + [ [ \ldots [F', x^1_{i_{1, 1}}] \ldots, x^k_{i_{k, s_k - 1}}],  [x^k_{i_{k,s_k}}, x_i^j] ]
\end{equation*}

The second summand is zero. If we order all $x$ coordinates such that all $x^i$ come before $x^{i+1}$, all in their natural linear order, then the first summand has lesser last coordinate than the previous expression. Thus, repeating this case eventually terminates.

Thus, $F$ may be written in the desired form. Doing this for each factor of $\alpha$ completes the proof.
\end{proof}

In the case $d=1$, there is a similar relation to that from Lemma \ref{hom_relations}. The only difference is $[B_1, B_2]$ is replaced with $B_1 \cdot B_2 + B_2 \cdot B_1$. Using this relation, we get the $d=1$ analogue to Theorem \ref{hom_basis}.

\begin{theorem}
\label{hom_basis_1}
For $d=1$ and any $\vec{n} \in \N^m$, let $S$ be the set of elements of $H_* \M(\vec{n})$ that can be written in the form:
\begin{equation*}
A_{I_0} \cdot B_{J_1} \cdot A_{I_1} \cdot \ldots \cdot B_{J_\ell} \cdot A_{I_{\ell}}
\end{equation*}
where $I_0, J_1, \ldots, J_{\ell}, I_{\ell}$ is a partition of $\{ x_i^j : 1 \leq j \leq m, 1\leq i \leq n_j\}$. If $I_s = \{ x_i^j | 1\leq j \leq m, i \in M_{j} \subset [n_j] \}$, then $A_{I_s} = x^1_{i_{1,1}} \cdot \ldots \cdot x^1_{i_{1, \ell_1}} \cdot \ldots \cdot x^m_{i_{m, 1} } \cdot \ldots \cdot x^m_{i_{m, \ell_m}}$ where $i_{j, 1}, \ldots, i_{j, \ell_j}$ is an enumeration of $M_{j}$.

$B_{J_s}$ is of the form:

\begin{center}
$\{ x_{i_{1, 1}}^1, \ldots, x_{i_{1, \ell_1}}^1, \ldots, x_{i_{k, 1}}^k, \ldots, x_{i_{k, \ell_{k}}}^{k}\}$
\end{center}

where $J_s$ is the set of elements $\{ x_{i_{1, 1}}^1, \ldots, x_{i_{1, \ell_1}}^1, \ldots, x_{i_{k, 1}}^k, \ldots, x_{i_{k, \ell_{k}}}^{k}\}$ for some $(\ell_1, \ldots, \ell_k, 0, \ldots, 0) \in \C$.

Furthermore, if $k$ is the maximum color that appears in $J_s$, we require that $I_s$ has no color $\ell$ coordinates for all $\ell > k$ and that the greatest index of a color $k$ coordinate in $J_s$ is greater than any index of any color $k$ coordinate in $I_s$
\end{theorem}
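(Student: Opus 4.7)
The plan is to follow the strategy of the proof of Theorem \ref{hom_basis}, adapted for the associative (rather than Poisson) setting. By Theorem \ref{hom_proof}, every class in $H_*\M(\vec{n})$ can be written as a linear combination of products of local classes, namely of single coordinates $x^j_i$ and curly-bracket classes $\{x^1_{i_{1,1}},\ldots,x^k_{i_{k,\ell_k}}\}$ arising from critical tuples. Since $d=1$ we have no Lie brackets to worry about, and the entire task is to take an arbitrary associative product of local classes and rewrite it into the prescribed form $A_{I_0}\cdot B_{J_1}\cdot A_{I_1}\cdots B_{J_\ell}\cdot A_{I_\ell}$ modulo the relations available to us.

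The main tool is the $d=1$ analogue of Lemma \ref{hom_relations}, obtained by collapsing the boundary of the sphere $S$ in the same way but with $[B_1,B_2]$ replaced by $B_1\cdot B_2+B_2\cdot B_1$:
\begin{equation*}
\sum_{j\in J}\sum_{i=1}^{\ell_j}\Bigl(\{x^1_1,\ldots,\hat x^j_i,\ldots,x^k_{\ell_k}\}\cdot x^j_i + x^j_i\cdot \{x^1_1,\ldots,\hat x^j_i,\ldots,x^k_{\ell_k}\}\Bigr)=0.
\end{equation*}
This lets us trade a single coordinate adjacent to a curly-bracket block for a sum of other terms in which a different coordinate sits on the opposite side of a related curly-bracket block. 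First I would use these relations, for each adjacent pair (curly-bracket factor)$\cdot$(coordinate), to enforce the constraint that the $A$-block $I_s$ following $B_{J_s}$ contains no coordinates of color $>k$ (where $k$ is the maximum color in $J_s$) and that the largest color-$k$ index in $J_s$ exceeds all color-$k$ indices in $I_s$. Each violation of these conditions is eliminated by the relation above, introducing summands where the violating coordinate has been absorbed into a curly-bracket block on the other side, and the remaining summands either are already normal or are strictly smaller with respect to a chosen well-ordering.

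Concretely, I would set up a well-ordering on products analogous to the last coordinate ordering in the proof of Theorem \ref{hom_basis}: order first by the total number of adjacent (curly-bracket, coordinate) pairs that violate the boundary condition, then by the lexicographic profile of colored indices within each $A$-block (where the prescribed order colors $1,\ldots,m$ with indices increasing is taken to be minimal). A single application of the relation strictly reduces this measure on at least one summand, while the rest are either already normal or reducible. A case analysis mirroring the three cases in the proof of Theorem \ref{hom_basis} (both neighbors curly-bracket; one is curly-bracket, one is coordinate; both coordinates) handles each type of offending adjacency.

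The main obstacle is the bookkeeping required by the absence of antisymmetry: in Theorem \ref{hom_basis} the Jacobi and antisymmetry relations provided sign flexibility that made the induction transparent, whereas here the $d=1$ relation yields a symmetric sum $B\cdot x + x\cdot B$ on one side that must be split into a principal term (which we want to push toward normal form) and correction terms (whose curly-bracket constituents change shape). Verifying that the correction terms are controlled by the well-ordering, and in particular that modifying a curly-bracket block never re-creates a previously resolved violation, is the technical heart of the argument and will require the same careful choice of induction invariants as in Theorem \ref{hom_basis}.
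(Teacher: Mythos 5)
Your strategy is the one the paper itself (only) sketches: start from the spanning set of products of local classes coming from Theorem \ref{hom_proof}, and use the $d=1$ analogue of Lemma \ref{hom_relations} (with $[B_1,B_2]$ replaced by $B_1\cdot B_2+B_2\cdot B_1$) to rewrite each product into the alternating normal form $A_{I_0}\cdot B_{J_1}\cdots B_{J_\ell}\cdot A_{I_\ell}$, arguing termination by induction. The one soft spot is your termination measure. Counting violating $(B_{J_s},A_{I_s})$ adjacencies as the primary invariant is not monotone: a relation application at the pair $s$ produces summands of the form $x'\cdot\{\ldots\}$ in which a coordinate is expelled to the \emph{left} of the bracket, i.e.\ into $A_{I_{s-1}}$, and this can create a brand-new violation at the pair $s-1$ even as the violation at $s$ is resolved, so your count can go up; it is not clear that your secondary lexicographic measure repairs this. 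The paper's (sketched) remedy is an ordering of the work rather than of the terms: fix the rightmost pair $(B_{J_\ell},A_{I_\ell})$ first and move leftward. Each relation application only redistributes the coordinates of $B_{J_s}\cup\{x\}$ among the bracket and its two neighboring blocks, so the already-processed pairs to the right are never touched; coordinates pushed left land only in pairs not yet processed; and within the pair $s$ every summand strictly decreases the number of offending coordinates in $A_{I_s}$, since the offender is either absorbed into the bracket or expelled to the left, while any coordinate released to the right has color strictly below, or index strictly below the maximum of, the new bracket's top color. With that processing order your argument goes through; also note that the three-case analysis of Theorem \ref{hom_basis} is not really needed here, since at $d=1$ there are no nested Lie brackets and the word is just an alternating string of blocks, which is why the paper calls this induction ``much simpler.''
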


The proof of this also follows by (a much simpler) induction. We leave the details for the reader to fill in. The main idea is to first use relations to ensure $B_{J_\ell}$ and $A_{I_\ell}$ satisfy the desired restrictions. Next, use relations to ensure $B_{J_\ell-1}$ and $A_{I_\ell-1}$ satisfy the desired restrictions. Doing this does not mess up the previous step. Continuing inductively we get each $B_{J_i}$ and $A_{I_i}$ satisfy the restrictions.

In the next section, we show that the generating sets given in Theorems \ref{hom_basis} and \ref{hom_basis_1} are actually bases.


\subsection{Cohomology} \label{cohom_1}
As in the no $k$-equal spaces studied by Dobrinskaya and Turchin \cite{DT}, the cohomology ring of $\M(\vec{n})$ can be described by a space of forests. We will be calculating cohomology with integer coefficients.

Let $N_j = \{ x_i^j : 1 \leq i \leq n_j \}$.

\begin{defin}
An \emph{admissible forest} is a forest satisfying the following:
it has two types of vertices: rectangles and circles. Each circle contains exactly one element of $\bigcup_{j= 1}^m N_j$. Each circle is connected to at most one rectangle and nothing else. Each rectangle is connected to at least one circle. For each rectangle, there exists $(\ell_1, \ldots, \ell_k, 0, \ldots, 0) \in \C, (\ell_k > 0),$ such that the rectangle contains $\ell_j$ elements from $N_j$ for all $j < k$ and $\ell_k - 1$ elements from $N_k$. All circles attached to this rectangle are from $\bigcup_{j = 1}^k N_j$.

An \emph{orientation} of an admissible forest is:
\begin{itemize}
\item an orientation of each edge
\item an ordering of elements within each rectangle
\item an ordering of the set of rectangles and edges
\end{itemize}
\end{defin}

\begin{figure}[t]
\begin{minipage}{0.3 \linewidth}
\def\scale{0.3}
\begin{center}
\begin{tikzpicture}
\foreach \x/\y in {0/0, 0/1, 0/2, 0/3, 0/4, 0/5, 1/0, 1/1, 1/2, 1/3, 1/4, 2/0, 2/1, 2/2, 2/3, 3/0, 3/1}
	{\node[draw,rectangle, minimum height = \scale cm, minimum width = \scale cm] at (\x*\scale, \y*\scale) {};}
\end{tikzpicture}
\end{center}
\end{minipage}
\begin{minipage}{0.6 \linewidth}
\begin{center}
\begin{tikzpicture}[every node/.style={scale=0.8}]
\node[draw, circle] (r0) at (0, 0.7) {$x^1_3$};
\node[draw, circle] (r1) at (1, 1.4) {$x^2_1$};
\node[draw, circle] (d0) at (2,1.4) {$x^2_2$};
\node[draw, circle] (d1) at (3, 1.4) {$x^2_3$};
\node[draw, rectangle] (s1) at (2,0) {$x^2_4$, $x^2_5$, $x^2_7$};
\node[draw, circle] (r2) at (4.5, 1.4) {$x^1_4$};
\node[draw, rectangle] (s2) at (4.5,0) {$x^1_1$, $x^1_2$, $x^1_5$, $x^2_6$, $x^2_8$};
\foreach \dest/\source in {r1/s1, d0/s1, d1/s1, r2/s2, s2/s1}
	\path[-] (\dest) edge (\source);
\end{tikzpicture}
\end{center}
\end{minipage}
\caption{An example of a decreasing ideal, $I$, and an unoriented admissible forest for $I$.}
\end{figure}
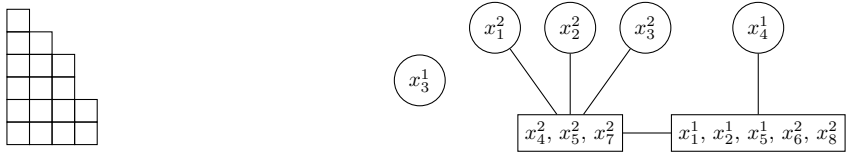

To each admissible forest, we associate a chain in $\R^{(n_1 + \ldots + n_m)d}$ whose boundary lies in the complement of $\M$. Thus, the forest will represent a cocycle in $H^* \M(\vec{n})$. We associate the chain as follows:
\begin{itemize}
\item for each rectangle $A$ and each $x, x' \in A$, $x = x'$
\item if there is an edge from $A$ to $B$ and $x \in A, x' \in B$, then $(x)_1 \leq (x')_1$ and $(x)_{\ell} = (x')_{\ell}$ for all $\ell > 1$ where $(x)_{\ell}$ is the $\ell^{th}$ coordinate of $x$
\end{itemize}

The rest of the orientation data is used to coorient the chain. We coorient the chain by giving an explicit basis for the normal bundle. Suppose there exists an edge from vertex $A$ to vertex $B$. Suppose $x$ is the first element in vertex $A$ and $x'$ is the first element in vertex $B$. Then this edge contributes:
\begin{align*}
\partial (x')_2 - \partial (x)_2, \ldots, \partial (x')_d - \partial (x)_d
\end{align*}
Suppose there exists a rectangle vertex with ordered elements $({x_1}, \ldots, {x_\ell})$. This rectangle vertex contributes:
\begin{align*}
\partial ({x_2})_1 - \partial ({x_1})_1, \ldots, ({x_2})_d - \partial ({x_1})_d,\ldots, \partial ({x_\ell})_d - \partial ({x_1})_d
\end{align*}

We now give relations between forests:

\begin{lemma}
\label{cohom_relations}
For $d>1$, the cohomology classes given by admissible forests have the following relations:
\begin{enumerate}
\item Orientation Relations:
\begin{enumerate}
\item Changing the order of the orientation set produces the Koszul sign of the permutation
\item A permutation $\sigma \in S_n$ of elements inside a rectangle vertex produces a sign $(-1)^{|\sigma| d}$.
\item Changing the orientation of an edge produces the sign $(-1)^d$.
\end{enumerate}
\item 3-term relation:
\begin{center}
\begin{tikzpicture}
\node[draw, rectangle] (1A) at (0,0) {$A$};
\node[draw, rectangle] (1B) at (1,1) {$B$};
\node[draw, rectangle] (1C) at (2,0) {$C$};
\path[->, style={sloped}] (1A) edge node[above]{$1$} (1B);
\path[->, style={sloped}] (1B) edge node[above]{$2$} (1C);
\node[text width=0.5 cm] at (3, 0.5) {$+$};
\node[draw, rectangle] (2A) at (4, 0) {$A$};
\node[draw, rectangle] (2B) at (5, 1) {$B$};
\node[draw, rectangle] (2C) at (6, 0) {$C$};
\path[->, style={sloped}] (2B) edge node[above]{$1$} (2C);
\path[->] (2C) edge node[above]{$2$} (2A);
\node[text width=0.5 cm] at (7, 0.5) {$+$};
\node[draw, rectangle] (3A) at (8, 0) {$A$};
\node[draw, rectangle] (3B) at (9, 1) {$B$};
\node[draw, rectangle] (3C) at (10, 0) {$C$};
\path[->] (3C) edge node[above]{$1$} (3A);
\path[->, style={sloped}] (3A) edge node[above]{$2$} (3B);
\node[text width=0.5 cm] at (11, 0.5) {$=$};
\node[text width=0.5 cm] at (12, 0.5) {$0$};
\end{tikzpicture}
\end{center}

\item Relation exchanging values in rectangles: Let $c(j)$ be the color of $z_j$. Let $c'$ be the maximum color of any $z_i$. Suppose $c' \geq k$ and that there exists $s \geq c'$ such that $(\ell_1, \ldots, \ell_k, 0 \ldots, 0) + \vec{e}_{c'} + \vec{e}_{s} \in \C$, then:
\begin{center}
\begin{tikzpicture}
\node[text width = 8cm] at (0,0) {$\sum\limits_{j\in J} (-1)^{j(d-1)}$};
\node[draw, rectangle, scale = .8] (r) at (1, 1) {$x^1_{i_{1,1}} \ldots, x^1_{i_{1, \ell_1}}, \ldots, x^k_{i_{k,1}} \ldots, x^k_{i_{k, \ell_k}}, z_j$};
\node[draw, circle, minimum size=1cm, scale = .7] (j1) at (-2, -1) {$z_1$};
\node[draw, circle, minimum size=1cm, scale = .7] (j2) at (-0.5, -1) {$z_2$};
\node[text width=.3 cm] at (0.5, -.25) {$...$};
\node[draw, circle, minimum size=1cm, scale = .7] (j3) at (1, -1) {$z_{j-1}$};
\node[draw, circle, minimum size=1cm, scale = .7] (j4) at (2, -1) {$z_{j+1}$};
\node[text width = .3 cm] at (2.3, -.25) {$...$};
\node[draw, circle, minimum size=1cm, scale = .7] (j5) at (4, -1) {$z_{r}$};
\foreach \label/\dest in {1/j1, 2/j2, {}/j3, {}/j4, {\hspace{.45 cm} r-1}/j5}
	\path[->, style ={sloped}] (r) edge node[above]{\small{$\label$}} (\dest);
\node[text width = .5cm] at (5, -.25) {$=$};
\node[text width = .5cm] at (6, -.25) {$0$};
\end{tikzpicture}
\end{center}
where $J = \{j | (\ell_1, \ldots, \ell_k, 0, \ldots, 0) + \vec{e}_{c(j)} \in I\}$.
\end{enumerate}

In relations 2 and 3, the rectangles may be attached to other rectangles.
\end{lemma}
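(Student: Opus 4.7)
My approach treats each admissible forest $F$ as a cooriented chain $\Sigma_F \subset \R^{(n_1 + \ldots + n_m)d}$ cut out by the rectangle equalities $x = x'$ and the edge constraints $(x_A)_1 \leq (x_B)_1$, $(x_A)_j = (x_B)_j$ for $j \geq 2$, with coorientation given by the conormal basis prescribed by the orientation data. The plan is to verify each relation either by reinterpreting this coorientation (for the orientation relations) or by producing an explicit bounding chain in $\M$ whose boundary in the forbidden locus realizes the relation.

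The orientation relations are a direct sign computation. The underlying chain $\Sigma_F$ does not depend on the order of the orientation set or the order of elements inside a rectangle, so only the conormal basis changes. Permuting the orientation set permutes the blocks of conormal vectors, producing the Koszul sign. For a rectangle with ordered elements $(x_1, \ldots, x_\ell)$, the conormal contribution is $\ell - 1$ blocks of $d$ vectors; a transposition of two non-base elements permutes two such blocks, giving $(-1)^{d^2} = (-1)^d$, while a transposition involving the base element $x_1$ negates one block and leaves the others intact modulo column operations, again giving $(-1)^d$, so a general $\sigma$ yields $(-1)^{|\sigma|d}$. Reversing an edge from $A$ to $B$ both reflects $\Sigma_F$ across the hyperplane $(x_A)_1 = (x_B)_1$ (contributing $-1$ from this orientation-reversing reflection) and negates each of the $d-1$ conormal vectors $\partial (x_B)_j - \partial (x_A)_j$ (contributing $(-1)^{d-1}$), for a total of $(-1)^d$.

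The 3-term relation is an Arnold-type relation. I would introduce the chain $\Sigma$ defined by the same equations as any of the three summands, except that the coordinate-one orderings among the rectangles $A$, $B$, $C$ are dropped so that $A$, $B$, $C$ share coordinates $2, \ldots, d$ while their first coordinates move freely. The codimension-one strata of $\partial \Sigma$ that meet the forbidden locus are exactly the three loci where two of the pairs $(A,B), (B,C), (C,A)$ collide in the first coordinate; these correspond, with appropriate coorientation signs, to the three forests in the relation, and the vanishing of $\partial^2$ on $\Sigma$ becomes the 3-term relation.

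The exchange relation is dual to Lemma \ref{hom_relations} and I expect it to be the main obstacle. My plan is to construct a chain $\Sigma$ in which the would-be rectangle elements $x^1_{i_{1,1}}, \ldots, x^k_{i_{k,\ell_k}}$ are collapsed to a common point $p$ and the circles $z_1, \ldots, z_r$ are confined to the first-coordinate line through $p$ with the ordering dictated by the edge labels, but without insisting $z_j \neq p$. The codimension-one strata of $\partial \Sigma$ where a single $z_j$ merges with $p$ recover, after excising tubular neighborhoods, the $j$-th forest in the relation in which $z_j$ has joined the rectangle; such a stratum lies inside $\M$ exactly when $(\ell_1, \ldots, \ell_k, 0, \ldots, 0) + \vec{e}_{c(j)} \in I$, which explains the index set $J$. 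The sign $(-1)^{j(d-1)}$ will appear from commuting the newly absorbed conormal block past the $j - 1$ edge conormal blocks of the preceding edges, each of dimension $d-1$. The chief bookkeeping difficulty is reconciling these local signs with the global ordering of rectangles and edges and with the possibility that the exchanged rectangle is itself attached to a further rectangle above it, which will require a careful local model for $\Sigma$ near the merger locus.
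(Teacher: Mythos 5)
Your overall strategy (cooriented chains, plus a cell of one higher dimension whose boundary components lying in the forbidden locus are discarded) is the paper's strategy, and your identification of the index set $J$ in relation 3 via which collision strata leave $\M$ is correct. But there is a substantive gap in relation 3: you never verify that the boundary strata for $j \in J$ are themselves \emph{admissible} forests, and this is where all of the hypotheses of the relation live. After $z_j$ merges into the rectangle, the rectangle has content $(\ell_1, \ldots, \ell_k, 0, \ldots, 0) + \vec{e}_{c(j)}$ and carries circles whose maximal color is $c'$, so admissibility requires $(\ell_1, \ldots, \ell_k, 0, \ldots, 0) + \vec{e}_{c(j)} + \vec{e}_{c'} \in \C$. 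Proving this is the second half of the paper's argument, and it is exactly where the assumption ``$c' \geq k$ and there exists $s \geq c'$ with $(\ell_1, \ldots, \ell_k, 0, \ldots, 0) + \vec{e}_{c'} + \vec{e}_{s} \in \C$'' and the decreasing property of $I$ are used (one shows $(\ell_1, \ldots, \ell_k, 0, \ldots, 0) + \vec{e}_{c(j)} + \vec{e}_{c'} \notin I$ by playing $\vec{e}_s$ against decreasingness, and then decreasingness again upgrades this to criticality). Your proposal never invokes these hypotheses, so as written it produces a relation among chains, not the stated relation among admissible forests. A smaller point in the same part: if you literally impose a linear ordering of the $z_i$ along the first-coordinate line, your cell acquires extra boundary walls where consecutive circles collide, which need not lie in the forbidden locus; the correct bounding cell is the tree with the bare rectangle and all $r$ circles attached, i.e.\ each $z_i$ only constrained to the positive side of $p$.

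For the 3-term relation your bounding chain is dimensionally wrong: dropping the two first-coordinate inequalities does not raise dimension (inequalities cut out half-spaces, not hypersurfaces), so your $\Sigma$ has the same dimension as the three summands, the collision loci are interior walls rather than strata of $\partial\Sigma$, and the three forests of the relation are chambers of $\Sigma$, not codimension-one boundary pieces. The paper instead uses relation 1(c) to rewrite the identity so that one side is the single star-shaped tree with both edges leaving $B$, whose chain is literally the union of the chains of the two trees on the other side, split along the wall $(x_A)_1 = (x_C)_1$. Relatedly, your justification of 1(c) by reflecting across the hyperplane $(x_A)_1 = (x_B)_1$ is not a proof: that reflection neither preserves $\M(\vec{n})$ nor is it homotopic to the identity there. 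The paper's argument is again a cobounding cell, namely the one in which the edge constraint is replaced by the inequality in the second coordinate, whose boundary is the sum of the two cells with opposite edge orientations; the sign $(-1)^d$ then comes from the coorientation bookkeeping you already describe.
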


\begin{proof}
Relations 1(a) and 1(b) come from changing the coorientation. For 1(c), the inequality changes from $(i)_1 \leq (j)_1$ to $(i)_1 \geq (j)_1$. To see these are homologous (up to a sign), consider the cell given by the the inequality $(i)_2 < (j)_2$. Its boundary is a sum of the two cells in question.

Relation 2 is equivalent to
\begin{center}
\begin{tikzpicture}
\node[draw, rectangle] (1A) at (0,0) {A};
\node[draw, rectangle] (1B) at (1,1) {$B$};
\node[draw, rectangle] (1C) at (2,0) {$C$};
\path[->, style={sloped}] (1B) edge node[above]{$1$} (1A);
\path[->, style={sloped}] (1B) edge node[above]{$2$} (1C);
\node[text width=0.5 cm] at (3, 0.5) {$=$};
\node[draw, rectangle] (2A) at (4, 0) {$A$};
\node[draw, rectangle] (2B) at (5, 1) {$B$};
\node[draw, rectangle] (2C) at (6, 0) {$C$};
\path[->, style={sloped}] (2B) edge node[above]{$1$} (2A);
\path[->] (2A) edge node[above]{$2$} (2C);
\node[text width=0.5 cm] at (7, 0.5) {$+$};
\node[draw, rectangle] (3A) at (8, 0) {$A$};
\node[draw, rectangle] (3B) at (9, 1) {$B$};
\node[draw, rectangle] (3C) at (10, 0) {$C$};
\path[->] (3C) edge node[above]{$1$} (3A);
\path[->, style={sloped}] (3B) edge node[above]{$2$} (3C);
\end{tikzpicture}
\end{center}
The cell corresponding to the left hand side is the union of the cells corresponding to the trees on the right hand size.

Relation 3 comes from looking at the boundary of the cell corresponding to:
\begin{center}
\begin{tikzpicture}
\node[draw, rectangle, scale = .8] (r) at (1, 1) {$x^1_{i_{1,1}} \ldots, x^1_{i_{1, \ell_1}}, \ldots, x^k_{i_{k,1}} \ldots, x^k_{i_{k, \ell_k}}$};
\node[draw, circle, minimum size=1cm, scale = .7] (j1) at (-0.5, -1) {$j_1$};
\node[text width=.3 cm] at (1, -.25) {$...$};
\node[draw, circle, minimum size=1cm, scale = .7] (j5) at (2.5, -1) {$j_{r}$};
\foreach \label/\dest in {1/j1, {\hspace{.45 cm} r}/j5}
	\path[->, style ={sloped}] (r) edge node[above]{\small{$\label$}} (\dest);
\end{tikzpicture}
\end{center}

The boundary of the subspace corresponding to the above tree has a component for each circle where the coordinate in the circle is equal to all coordinates in the rectangle. For elements not in $J$, these subspaces are not in $\M$ and, thus, contribute zero. For $j \in J$, we need to show that the resultant tree is admissible.

Thus, suppose $c' \geq k$ and there exists $s \geq c'$ with $(\ell_1, \ldots, \ell_k, 0 \ldots, 0) + \vec{e}_{c'} + \vec{e}_{s} \in \C$. I claim that for all $c \leq c'$, if $(\ell_1, \ldots, \ell_k, 0, \ldots, 0) + \vec{e}_{c} \in I$, then $(\ell_1, \ldots, \ell_k, 0, \ldots, 0) + \vec{e}_{c} + \vec{e}_{c'} \in \C$.

By assumption $(\ell_1, \ldots, \ell_k, 0 \ldots, 0) + \vec{e}_{c'} + \vec{e}_{s} \in \C$. Thus, $(\ell_1, \ldots, \ell_k, 0 \ldots, 0) + \vec{e}_{c'} + \vec{e}_{s} \notin I$. Thus, we have $(\ell_1, \ldots, \ell_k, 0 \ldots, 0) + \vec{e}_{c'} + \vec{e}_{s} + \vec{e}_{c} \notin I$. Suppose $(\ell_1, \ldots, \ell_k, 0 \ldots, 0) + \vec{e}_{c'} + \vec{e}_{c} \in I$. Then because $I$ is decreasing,  $(\ell_1, \ldots, \ell_k, 0 \ldots, 0) + \vec{e}_{c'} + \vec{e}_{s} + \vec{e}_{c} \in \C$. However, this can't be since $(\ell_1, \ldots, \ell_k, 0 \ldots, 0) + \vec{e}_{c'} + \vec{e}_{s} \notin I$. Thus, $(\ell_1, \ldots, \ell_k, 0 \ldots, 0) + \vec{e}_{c'} + \vec{e}_{c} \notin I$.

In summary, $(\ell_1, \ldots, \ell_k, 0, \ldots, 0) + \vec{e}_{c} \in I$ and $(\ell_1, \ldots, \ell_k, 0 \ldots, 0) + \vec{e}_{c} + \vec{e}_{c'} \notin I$. Because $I$ is decreasing, $(\ell_1, \ldots, \ell_k, 0 \ldots, 0) + \vec{e}_{c} + \vec{e}_{c'} \in \C$. Thus, the term for each $j \in J$ is an admissible forest.
\end{proof}

The only relation that does not work when $d=1$ is relation 1(c). There is a substitute for 1(c) in the case that $d=1$:

\begin{center}
\begin{tikzpicture}
\node[draw, rectangle] (1A) at (0,0) {$A$};
\node[draw, circle] (1B) at (0,1.5) {$B$};
\path[->] (1A) edge (1B);
\node[text width=0.5 cm] at (1, .75) {$+$};
\node[draw, rectangle] (2A) at (2, 0) {$A$};
\node[draw, circle] (2B) at (2, 1.5) {$B$};
\path[->] (2B) edge (2A);
\node[text width=0.5 cm] at (3, 0.75) {$=$};
\node[draw, rectangle] (3A) at (4, 0) {$A$};
\node[draw, circle] (3B) at (4, 1.5) {$B$};
\end{tikzpicture}
\end{center}
Here, the circle may be replaced with a rectangle.

Using these relations, we produce bases for cohomology.

\begin{defin}
Define a \emph{linear $I$-tree} to be an admissible tree of the following form:

\begin{center}
\begin{tikzpicture}
\foreach \label/\x/\numc in {1/2/3,2/4/4,3/6/2, n/10/3}
	{
	\node[draw, rectangle] (r\label) at (\x,0){$A_{\label}$};
	\foreach \m in {1,...,\numc}
		{
		\node[draw, circle] (s\label\m) at ({\x-1+2*(\m-0.5)/(\numc)}, 1.5) {} ;
		\path[->] (r\label) edge (s\label\m);
		}
	}
\node[text width = 1cm](r4) at (8,0){$\hspace{.4cm}...$};
\path[->] (r1) edge (r2);
\path[->] (r2) edge (r3);
\path[->] (r3) edge (r4);
\path[->] (r4) edge (rn);
\end{tikzpicture}
\end{center}

such that
\begin{itemize}
\item The elements in $A_i$ appear in their natural order
\item The circles attached to each rectangle are ordered similarly
\item The minimal $N_1$ element in the tree is in $B_1$
\item For each $i$, suppose $c$ is the maximum color present in $B_i$. Then the maximum element from $N_c$ in $B_i$ is not in $A_i$.
\end{itemize}

where $B_i$ is the set of elements in $A_i$ and circles attached to $A_i$.
\end{defin}

Using the relations from Lemma \ref{cohom_relations}, any admissible forest can be written as a forest whose components are linear $I$-trees and singleton circles. We will show that this is a basis for $H^* \M(\vec{n})$. For $d>1$, this basis will be dual to the generating set for homology from Theorem \ref{hom_basis}.

\begin{defin}
Let $\mathcal{H}$ be the set of generators given in Theorem \ref{hom_basis}. Let $\mathcal{C}$ be the set of cohomology classes represented by products of linear $I$-trees and singleton circles. Define $f: \mathcal{H} \to \mathcal{C}$ as follows:

Let $A \in \mathcal{H}$. Let $f(A)$ be the forest satisfying the following:
\begin{itemize}
\item For each $x^j_i$ factor in $A$, $f(A)$ has a singleton circle containing $x^j_i$
\item Each other factor in $A$ has a corresponding linear $I$-tree as follows:
Suppose the factor is given by $[ \ldots [ [B_1, B_2], B_3] \ldots B_{\ell}]$, then each $B_i$ has a corresponding rectangle vertex, $A_i$. These rectangle vertices form a path from $A_1$ to $A_\ell$. Recall, each $B_i$, is of the following form:
\begin{align*}
B_i =[ \ldots [ [ \ldots [\{ x_{i_{1, 1}}^1, \ldots, x_{i_{1, \ell_1}}^1, \ldots, x_{i_{k, 1}}^k, \ldots, x_{i_{k, \ell_{k}}}^{k}\}, x_{r_{1,1}}] \ldots x_{r_{1, s_1}} ],\ldots x_{r_{k, 1}}] \ldots x_{r_{k, s_k}}]
\end{align*}
where $(\ell_1, \ldots, \ell_k, 0, \ldots, 0) \in \C, i_{j,1} < \ldots < i_{j, \ell_j}, r_{j,1} < \ldots < r_{j, s_j}$. The rectangle corresponding the $B_i$ satisfies the following:
\begin{itemize}
\item it contains $x_{i_{1, 1}}^1, \ldots, x_{i_{1, \ell_1}}^1, \ldots, x_{i_{k, 1}}^k, \ldots, x_{i_{k, \ell_{k}-1}}^{k}$
\item all other elements from $B_i$ are circles attached to it.

\end{itemize}
\end{itemize}
\end{defin}

It is a straight forward exercise to check $f$ is a bijection.

Order $\mathcal{H}$ such that if $F_1$ has more rectangles than $F_2$, then $F_1$ comes after $F_2$. Order $\mathcal{C}$ according to the ordering of corresponding elements in $\mathcal{H}$.

\begin{theorem}
\label{diag_proof}
With the above ordering, the intersection pairing matrix is diagonal such that each diagonal element is $1$ or $-1$.
\end{theorem}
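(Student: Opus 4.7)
The plan is to verify that the pairing matrix is diagonal with $\pm 1$ entries by computing intersection numbers $\langle f(A), A'\rangle$ for $A, A' \in \mathcal{H}$ on carefully chosen geometric representatives. First I would fix scale-separated operadic representatives for each $A \in \mathcal{H}$: a top-level product $A = F_1 \cdots F_p$ is realized by placing representatives of each $F_i$ in widely separated little discs in $\R^d$; within a bracket factor $[\ldots[[B_1,B_2],\ldots B_\ell]]$, each outer Lie bracket is realized by an $S^{d-1}$-family of antipodal placements of the two sub-representatives at a coarser scale than anything inside; within each block $B_s = [\ldots[\{x^1_{i_{1,1}},\ldots\},x_{r_{1,1}}]\ldots]$, the innermost sphere is the standard sphere $\{x^1_{i_{1,1}},\ldots\}$ at a small scale and each extra bracket with an $x_{r_{j,t}}$ adds that point antipodally to the growing cluster at the next finer scale. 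On the cohomology side, $f(A) \in \mathcal{C}$ is realized as its natural polyhedral chain: elements of each rectangle are forced to coincide, each edge imposes a $+e_1$-displacement, and singleton circles are free.

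Second, the diagonal entries $\langle f(A), A\rangle = \pm 1$ would be proved by induction on the combined bracket/product nesting of $A$. The base case is a single sphere class, where the associated forest has one rectangle and one edge and intersects the sphere transversely in a unique point by a direct computation on the constraint surface $\sum x_i = 0$, $\sum |x_i|^2 = 1$. Each induction step adds either a new $x_{r_{j,t}}$ (producing a new circle and edge attached to an existing rectangle), a new outer Lie bracket $[-,B_{s+1}]$ (producing a new rectangle and a rectangle-to-rectangle edge), or a new product factor (producing a disjoint sub-forest on a spatially separated sub-cluster); in each case the inductive intersection extends uniquely and the orientations line up to give $\pm 1$.

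Third, to show off-diagonal entries vanish I would distinguish two regimes. When the connected-component decomposition of $f(A)$ does not match the top-level product clustering of $A'$, scale separation between product factors of $A'$ makes the equality constraints imposed by some rectangle of $f(A)$ unsatisfiable and the intersection is empty. When the component decompositions do match, I would compare each linear tree component of $f(A)$ with the corresponding bracket factor of $A'$ and reduce to a single linear tree paired with a single bracket factor on the same point set but with different internal structure; here the chain constraints cut out antipodally paired intersections on some inner sphere fibre, and the $(-1)^d$ sign of the antipodal map on $S^{d-1}$ is exactly compensated by the Koszul sign from the corresponding permutation of elements inside the associated rectangle (as governed by the orientation relations of Lemma \ref{cohom_relations}), so the paired contributions cancel. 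The main obstacle will be the sign bookkeeping in this second regime: one must match the antipodal sign on each sphere fibre with the Koszul signs from the induced permutations in the forest orientation data across all levels of nested brackets, and verify that no residual sign survives at any level.
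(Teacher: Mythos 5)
Your geometric setup and your treatment of the diagonal entries are essentially the paper's: scale-separated operadic representatives for the classes in $\mathcal{H}$, the polyhedral chain attached to each forest in $\mathcal{C}$, and a unique transverse intersection point obtained by solving the constraint system block by block (the paper does this directly rather than by induction, but the content is the same). The genuine gap is in your second off-diagonal regime. The paper never argues by cancellation of paired intersection points there; it shows the pairing is zero because the intersection can be made \emph{empty} or because the combinatorics force $T=T'$. Concretely: (i) if a rectangle of the forest contains coordinates not lying in a single curly bracket of a single $B_i$, a representative of $[\alpha]$ can be chosen disjoint from the chain; (ii) a degree count then forces the subforest paired with a factor $[\ldots[[B_1,B_2],B_3]\ldots B_s]$ to be a single tree with exactly $s$ rectangles; (iii) if the order of the rectangles along the tree, or the set of circles attached to a rectangle, disagrees with the tree built from the bracket factor, one chooses a representative of $[\alpha]$ (or re-orients edges to pass to a homologous tree) so that the chains do not meet; (iv) the only remaining freedom, namely which curly-bracket element is left out of the rectangle, is determined uniquely by the linear $I$-tree conditions together with the condition $i_{k,\ell_k}>r_{k,s_k}$ in Theorem \ref{hom_basis}, so a nonzero pairing forces $T=T'$.

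Your proposal replaces all of this with the assertion that mismatched pairings produce antipodally paired intersection points on an inner sphere fibre whose $(-1)^d$ signs are cancelled by Koszul signs from Lemma \ref{cohom_relations}, and you explicitly defer the sign bookkeeping that would justify it. That assertion is not established, and it is not even the right shape of argument in several of the cases you must handle: when the number of rectangles differs from the number of bracket blocks the intersection locus is not a finite union of antipodal pairs (one needs the degree count, which your reduction to ``same point set but different internal structure'' silently assumes); when a rectangle straddles two curly brackets, or the rectangle ordering along the tree is wrong, scale separation already makes the intersection empty, so there is nothing to cancel; and in the closest cases (same content, same number of rectangles, different choice of excluded element or of attached circles) the vanishing comes from the combinatorial rigidity of linear $I$-trees and from passing to homologous representatives, not from a sign computation. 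As written, the claim that the matrix is diagonal (rather than merely that some entries vanish) rests entirely on the unverified cancellation, so the proof is incomplete at its crucial step.
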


\begin{cor}
$\mathcal{H}$ and $\mathcal{C}$ are bases for $H_* \M(\vec{n})$ and $H^* \M(\vec{n})$, respectively.
\end{cor}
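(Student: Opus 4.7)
The plan is to reduce the computation of $\langle A, f(A')\rangle$ for $A, A' \in \mathcal{H}$ to a product of local pairings by exploiting the operadic factorization common to $\mathcal{H}$ and $\mathcal{C}$. Each $A \in \mathcal{H}$ is a product (in $H_0 \mathcal{M}_d$) of factors that are either singleton coordinates $x_i^j$ or iterated Lie brackets $[\ldots[[B_1, B_2], B_3]\ldots B_\ell]$ built from local classes and coordinates; by the left $\B_d$-action, these factors may be realized by cycles with pairwise disjoint spatial support (at well-separated scales). Correspondingly, $f(A')$ is a disjoint union of singleton circles and linear $I$-trees, each imposing conditions only on the coordinates appearing in its own component. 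Thus the global intersection number factors as the product of local pairings between each factor of $A$ and the corresponding component of $f(A')$ whenever the coordinate multisets line up; otherwise some coordinate is simultaneously forced to coincide with others by a rectangle in $f(A')$ and kept free by its singleton status in $A$, and the pairing vanishes.

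For the diagonal entries, I would show $\langle A, f(A)\rangle = \pm 1$ by induction on the bracket complexity. The base case is a single local class $\{x_1^1,\ldots,x_{n_m}^m\}$ paired with its single-rectangle linear $I$-tree: this is exactly the transverse intersection already displayed in Section \ref{hom_1} via the explicit dual chain, giving $\pm 1$. For the inductive step, observe that $[X,Y]$ in $H_*\mathcal{M}_d$ is represented by a $(d-1)$-sphere of antipodally scaled copies of $X$ and $Y$, and this sphere meets the cell corresponding to an edge of the linear tree (the condition $(x)_1 \leq (x')_1$, $(x)_\ell = (x')_\ell$ for $\ell>1$) transversely in exactly one point; thus each outer bracket contributes a $\pm 1$ factor, and the pairing telescopes down to the pairings of the innermost $B_i$ rectangles. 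Multiplying over all factors of $A$ yields $\pm 1$. A parallel argument using the substitute for relation 1(c) in Lemma \ref{cohom_relations} handles the $d=1$ case of Theorem \ref{hom_basis_1}.

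For off-diagonal vanishing with $A \neq A'$, I would split into cases by the coarsest invariant that distinguishes them. If $A$ and $A'$ have different total numbers of rectangles (or more generally lie in different degrees), the pairing is zero on degree grounds since $\deg A + \deg f(A') \neq \dim \M(\vec n)$. If the rectangle counts agree but the coordinate partitions into factors differ, then at least one coordinate is a singleton in $A$ but lies inside a rectangle of $f(A')$, or conversely, and the disjoint-support argument above forces zero. The remaining case is when $A$ and $A'$ have the same factor partition but differ in the internal structure of some bracket factor — which coordinates form the root sphere, which coordinates are circle-attachments, or the order of nesting. Here I invoke the canonical-form constraints from Theorem \ref{hom_basis} (the smallest $N_1$ index sits in $B_1$, the indices $i_{j,1}<\ldots<i_{j,\ell_j}$ are increasing, and $i_{k,\ell_k}>r_{k,s_k}$) together with the corresponding constraints in the definition of linear $I$-tree, to show that any such discrepancy produces a coordinate that appears inside a rectangle in $f(A')$ but is geometrically separated in the cycle of $A$.

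The main obstacle is this last off-diagonal case: verifying that the normal forms on both sides are rigid enough that every mismatch yields a geometric obstruction. I expect this to be an intricate but elementary combinatorial check, tracking which coordinates sit in the root rectangle versus the attached circles, and using the ``smallest index in $B_1$'' conventions on both sides to pin down any would-be identification of trees. Once this case analysis is complete, the diagonal structure of the pairing matrix with $\pm 1$ diagonal entries follows, and the Corollary that $\mathcal{H}$ and $\mathcal{C}$ are bases is immediate from linear algebra applied to the (necessarily invertible) pairing matrix.
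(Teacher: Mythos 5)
Your overall strategy is the same as the paper's: the Corollary is deduced from Theorem \ref{diag_proof} (the pairing matrix between $\mathcal{H}$ and $\mathcal{C}$ is diagonal with entries $\pm 1$), and your diagonal computation --- one transverse intersection point for a single rectangle, propagated through the Lie brackets and through disjointly supported factors --- is essentially the paper's argument. The problem is that you have left unproved exactly the step on which the paper's proof spends most of its effort: the off-diagonal case in which $A$ and $A'$ induce the same partition of coordinates into factors and each rectangle of $f(A')$ sits inside a single curly bracket, but the internal data differ (the order of the rectangles along the path, which coordinates are attached as circles to which rectangle, and which element of the curly bracket is omitted from the rectangle). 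You call this ``an intricate but elementary combinatorial check'' that you expect to work; the paper actually carries it out, and it is not automatic: a rectangle-order mismatch is killed by choosing a representative of $[\alpha]$ whose nested scales are incompatible with the chain of the tree, a circle-attachment mismatch is killed by reorienting edges (legitimate up to sign by relation 1(c) of Lemma \ref{cohom_relations}), and the omitted element of each rectangle is pinned down uniquely by the defining conditions of a linear $I$-tree, so that any tree pairing nontrivially with the factor must equal $f$ of that factor. Without this argument the diagonality claim, and hence the Corollary, is not established.

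A second, concrete error: you dispose of the case of different rectangle counts ``on degree grounds,'' but homological degree depends on both the number of rectangles and the number of Lie brackets, so distinct rectangle counts occur in the same degree. For instance, in the no-$3$-equal case with $d=2$ and six points, $\{x_1,x_2,x_3\}\cdot\{x_4,x_5,x_6\}$ and $[[[\{x_4,x_5,x_6\},x_1],x_2],x_3]$ both lie in degree $6$, with two rectangles and one rectangle respectively. The paper handles mismatched rectangle counts only after first reducing to the subforest of $F$ meeting a single factor of $[\alpha]$ (using the fact that coordinates from different factors lying in one tree force vanishing, and that every rectangle must lie inside a single curly bracket), and then applying a local degree count within that factor; your global degree claim does not substitute for this. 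Your disjoint-support argument in your second case would in fact cover the partition-mismatch part of this situation, but as you have organized the cases (gating it behind equal rectangle counts) the dichotomy has a hole. Repairing these two points essentially means reproducing the off-diagonal analysis in the paper's proof of Theorem \ref{diag_proof}; once that is in place, the passage from the invertible pairing matrix to the Corollary is, as you say, immediate.
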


We now prove Theorem \ref{diag_proof}.
\begin{proof}
We will first prove that the intersection pairing matrix contains $\pm 1$ along the diagonal, then that every entry off the diagonal is 0.

For $x^1_{1} \cdot \ldots \cdot x^1_{n_1} \cdot \ldots \cdot x^m_{1} \cdot \ldots \cdot x^m_{n_m} \in H_0 \M(\vec{n})$, the claim is obvious.

Next, consider a product of singleton coordinates with a single $\{ x_{i_{1, 1}}^1, \ldots, x_{i_{1, \ell_1}}^1, \ldots, x_{i_{k, 1}}^k, \ldots, x_{i_{k, \ell_{k}}}^{k}\}$ where $(\ell_1, \ldots, \ell_k, 0, \ldots, 0) \in \C$. Solving the system of equations from $\{ x_{i_{1, 1}}^1, \ldots, x_{i_{1, \ell_1}}^1, \ldots, x_{i_{k, 1}}^k, \ldots, x_{i_{k, \ell_{k}}}^{k}\}$ and $f(\{ x_{i_{1, 1}}^1, \ldots, x_{i_{1, \ell_1}}^1, \ldots, x_{i_{k, 1}}^k, \ldots, x_{i_{k, \ell_{k}}}^{k}\})$ gives one solution.

Next, consider a product of $[ \ldots [ [ \ldots [\{ x_{i_{1, 1}}^1, \ldots, x_{i_{1, \ell_1}}^1, \ldots, x_{i_{k, 1}}^k, \ldots, x_{i_{k, \ell_{k}}}^{k}\}, x_{r_{1,1}}] \ldots x_{r_{1, s_1}} ],\ldots x_{r_{k, 1}}] \ldots x_{r_{k, s_k}}]$ with singletons. Looking at the analogous system of equations as before again gives one solution. Below is a diagram showing geometrically what the point of intersection is in the case $[[[\{x^1_1, x^1_4, x^2_1\}, x^1_2], x^1_3], x^2_2]\cdot x^2_3\in H_*(\mathcal{M}_{I, 2} (4, 3))$. 
\begin{figure}[h]
\begin{center}
\begin{tikzpicture}
\node[draw, circle, inner sep = 1 pt] (c1) at (0,0) {};
\node[draw, circle, inner sep = 60 pt, dashed] (c2) at (0,0){};
\node[draw, circle, inner sep = 1 pt] (c3) at (-3, 0) {};
\node[draw, circle, inner sep = 30 pt, dashed] (c4) at (-3, 0) {};
\node[draw, circle, inner sep = 1pt, fill, label = {[shift = {(.25,-.25)}]$x^2_2$}] (c4) at (3,0){} ;
\node[draw, circle, inner sep = 1 pt, fill, label = {[shift = {(.25,-.25)}]$x^1_3$}] at (-1.5,0) {};
\node[draw, circle, inner sep = 15 pt, dashed] at (-4.5, 0){};
\node[draw, circle, inner sep = 1 pt] at (-4.5,0){};
\node[draw, circle, inner sep = 1pt, fill, label = {[shift = {(.25,-.25)}]$x^1_2$}] at (-3.75, 0) {};
\node[draw, circle, inner sep = 1pt, fill, label = {[yshift = -0.5 cm]$x^1_4$}] at (-4.75, 0){};
\node[draw, circle, inner sep = 1pt, fill, label = {[xshift = -0.35 cm]$x^1_1, x^2_1$}] at (-5.5, 0){};
\node[draw, circle, inner sep = 1pt] at (-5.25, 0){};
\node[draw, circle, inner sep = 1 pt, fill, label = {$x^2_3$}] at (6, 2){};
\end{tikzpicture}
\end{center}
\caption{Intersection of $[[[\{x^1_1, x^1_4, x^2_1\}, x^1_2], x^1_3], x^2_2]\cdot x^2_3$ and $f([[[\{x^1_1, x^1_4, x^2_1\}, x^1_2], x^1_3], x^2_2]\cdot x^2_3)$ when $d = 2$.}
\end{figure}
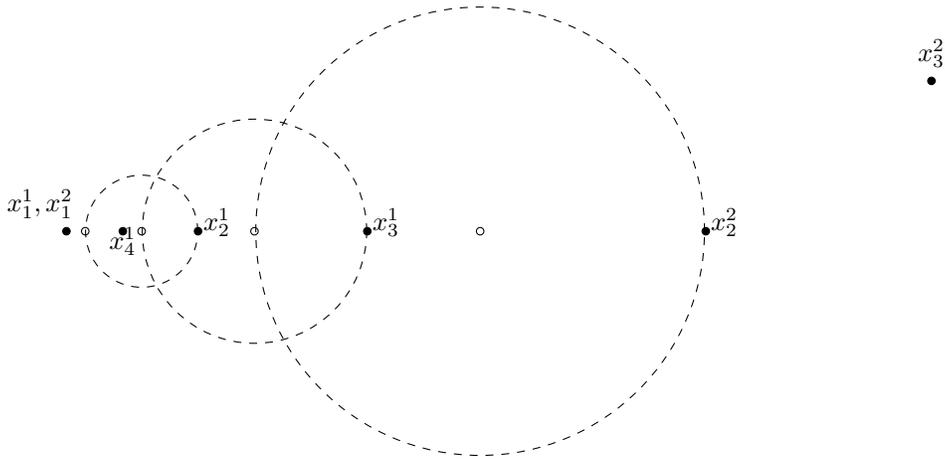

The case of a product of singletons and elements of the form $[ \ldots [ [ B_1, B_2], B_3], \ldots B_s]$ follows in a similar manner.

Arbitrary products of elements as in Theorem \ref{hom_basis} follows from noticing that the different factors correspond to different trees. Indices in different trees do not give any restrictions between the corresponding coordinates. Thus, there still exists a single point of intersection.

Finally, we need to show that all entries off the diagonal are zero. Let $[\alpha] \in \mathcal{H}$, $F \in \mathcal{C}$. Suppose there exist two coordinates in the same tree in $F$ but in different factors of $[\alpha]$. Then there exists a representative of $[\alpha]$ that does not intersect the chain represented by $F$. Thus, we reduce to the case where each tree in $F$ has indices from exactly one $[ \ldots [ [ B_1, B_2], B_3], \ldots B_s]$ factor.

Consider a factor of $\alpha$: $[ \ldots [ [ B_1, B_2], B_3], \ldots B_s]$. Let $T$ be the subforest of $F$ that is the trees that have coordinates from $[ \ldots [ [ B_1, B_2], B_3], \ldots B_s]$. First notice that if $T$ has a rectangle with coordinates that are not a subset of a single curly bracket of some $B_i$, then there exists a representative of $[\alpha]$ such that the corresponding chain does not intersect $\alpha$.

If $T$ has fewer than $s$ rectangles, then the preceding fact plus degree considerations tells us that the intersection pairing between $T$ and $[\alpha]$ is zero. Similarly, if $T$ has more than $s$ rectangle vertices, then the fact that each rectangle vertex of $T$ must correspond to a single curly bracket combined with the restriction of which coordinates can be in a rectangle in the definition of a linear $I$-tree gives that the intersection pairing is again zero. Thus, $T$ must contain exactly $s$ rectangles. Degree considerations also tell us $T$ must be a tree.

Suppose $T'$ is the tree produced from $[ \ldots [ [ B_1, B_2], B_3], \ldots B_s]$ as in the definition of $f$. Suppose $T$ is a linear $I$-tree containing the same coordinates set as $T'$ and has the same number of rectangle vertices as $T'$. As mentioned previously, each rectangle vertex of $T$ corresponds to a single curly bracket. Suppose the ordering of rectangle vertices in $T'$ and $T$ are different. Then there exists a representative of $[\alpha]$ that does not intersect the chain corresponding to $T$. Similarly, if the circles attached to a rectangle vertex in $T$ do not match those in $T'$, then there exists a change of orientation of edges of $T$ such that $\alpha$ does not intersect the the chain corresponding to this new tree. Which of the elements from the curly bracket is not in the rectangle vertex is determined uniquely based on the conditions for a tree being a linear $I$-tree. Thus, if the intersection product of $\alpha$ and the chain corresponding to $T$ is nonzero, then $T = T'$.
\end{proof}

For $d=1$, there is a slightly difference basis. The reason we need a different basis is due to the fact that changing the direction of edges is not as easy in the $d>1$ case. Additionally, we require more connectedness of our forests. Define a linear $I_1$-tree to be a linear $I$-tree with the condition requiring the minimal $N$ element to either be in $A_1$ or a circle attached to $A_1$ removed. Then a very similar proof to Theorem \ref{diag_proof} shows that products of singleton circles and a single linear $I_1$-tree forms a basis for $H^* \mathcal{M}_{I, 1} (\vec{n})$. This basis is not dual to the basis of $H_* \mathcal{M}_{I, 1} (\vec{n})$ given in Theorem \ref{hom_basis_1}, but with a suitable ordering, the intersection pairing matrix is upper triangular.


\subsubsection{Multiplicative Structure}

\begin{defin}
Let $T_1, T_2 \in H^* \M(n_1, n_2)$ be two admissible forests. Suppose for all rectangles $A$ in $T_1$ and $B$ in $T_2$, $A \cap B = \emptyset$. Let $T_1 \cup T_2$ be the tree defined as follows:
\begin{itemize}
\item if $i, j$ are in a common rectangle in $T_1$ or $T_2$, then $i, j$ are in a common rectangle in $T_1 \cup T_2$
\item if $i$ is in a circle in both $T_1$ and $T_2$, then $i$ is in a circle in $T_1 \cup T_2$
\item if $i \in A, j\in B$ in $T_k$ and there exists an edge from $A$ to $B$ in $T_k$, then there exists an edge from the vertex containing $i$ to the vertex containing $j$ in $T_1 \cup T_2$
\end{itemize}
\end{defin}

\begin{theorem}
\label{mult}
Let $T_1, T_2 \in H^* \M (n_1, n_2)$ be two admissible forests. The product of $T_1$ and $T_2$, $T_1 \cdot T_2$, is given as follows:

\begin{enumerate}
\item If there exists rectangles $A$ in $T_1$ and $B$ in $T_2$ such that $A \cap B \neq \emptyset$, then $T_1 \cdot T_2 = 0$.
\item If there exists two indices that are in a common tree in both $T_1$ and $T_2$, then $T_1 \cdot T_2 = 0$
\item If $T_1 \cup T_2$ has a cycle, then $T_1 \cdot T_2 = 0$.
\item If $T_1 \cup T_2$ has a rectangle with no circles attached to it, then $T_1 \cdot T_2 = 0$
\item If $T_1 \cup T_2$ is an admissible forest, then $T_1 \cdot T_2 = T_1 \cup T_2$ with orientation set given by concatenation.
\item If $T_1 \cup T_2$ satisfies none of the above, then use the following relation to make $T_1 \cup T_2$ admissible
\begin{enumerate}
\vspace{0.4cm}
\item 
\begin{center}
\begin{tikzpicture}[baseline=0.4 cm]
\node[draw, rectangle] (1A) at (0,0) {$A$};
\node[draw, circle] (1B) at (1,1) {};
\node[draw, rectangle] (1C) at (2,0) {$B$};
\path[->] (1B) edge node[above]{$2$} (1C);
\path[->] (1B) edge node[above]{$1$} (1A);
\node[text width=0.5 cm] at (3, 0.5) {$=$};
\node[draw, rectangle] (2A) at (4, 0) {$A$};
\node[draw, circle] (2B) at (5, 1) {};
\node[draw, rectangle] (2C) at (6, 0) {$B$};
\path[->] (2B) edge node[above]{$1$} (2A);
\path[->] (2A) edge node[below]{$2$}(2C);
\node[text width=0.5 cm] at (7, 0.5) {$+$};
\node[draw, rectangle] (3A) at (8, 0) {$A$};
\node[draw, circle] (3B) at (9, 1) {};
\node[draw, rectangle] (3C) at (10, 0) {$B$};
\path[->] (3B) edge node[above]{$2$} (3C);
\path[->] (3C) edge node[below]{$1$} (3A);
\end{tikzpicture}
\end{center}
\end{enumerate}
\end{enumerate}
\end{theorem}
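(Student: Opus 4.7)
The plan is to realize the cup product geometrically. Each admissible forest $T$ determines a cooriented chain $C(T)$ in $\R^{(n_1+\ldots+n_m)d}$ whose boundary lies outside $\M(\vec{n})$; the construction already given for the cocycle assignment gives a representative of the Poincar\'e--Lefschetz dual of $T$ in the complement-of-arrangement. Under this correspondence, the cup product $T_1 \cdot T_2$ is computed as the transverse intersection $C(T_1) \pitchfork C(T_2)$ (after a generic perturbation of the inequality directions), with coorientation obtained by concatenating the normal-bundle bases. The proof then becomes case analysis of what $C(T_1) \cap C(T_2)$ looks like, and a reduction via the relations of Lemma~\ref{cohom_relations} when the resulting forest is not admissible.

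First I would handle the vanishing cases. For (1), if a coordinate $x$ lies in rectangles $A \subset T_1$ and $B \subset T_2$, then $C(T_1) \cap C(T_2)$ forces $A \cup B$ to collapse to a single diagonal; the codimension of this diagonal exceeds $\mathrm{codim}\, C(T_1) + \mathrm{codim}\, C(T_2)$, so after a small generic perturbation the intersection is empty. Case (2) is essentially the same phenomenon: two indices lying in a common tree in both $T_1$ and $T_2$ produce redundant coincidence/inequality constraints that force codimension jump. For (3), a cycle in $T_1 \cup T_2$ means the inequalities on the first coordinate assemble into a loop $(x_{i_1})_1 \le (x_{i_2})_1 \le \ldots \le (x_{i_1})_1$, forcing equality all around and again dropping dimension. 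For (4), a rectangle with no attached circle corresponds to a forbidden critical tuple, so the chain sits in a subspace not contained in $\M(\vec{n})$ and the class vanishes.

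Next, case (5): when $T_1 \cup T_2$ is itself an admissible forest, the intersection of the two chains is cut out precisely by the conjunction of the two systems of equations and inequalities defining $C(T_1)$ and $C(T_2)$, which is exactly the defining system of $C(T_1 \cup T_2)$. The coorientation data are independent — the normal directions contributed by each rectangle and each edge in $T_1$ are complementary to those from $T_2$ — so concatenation of the orientation sets of $T_1$ and $T_2$ gives the correct coorientation on the union, with the Koszul sign hidden inside the ordering.

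Finally, case (6): if none of (1)--(4) applies but $T_1 \cup T_2$ fails to be admissible, the only possible defect is that some circle vertex $v$ receives incompatible edge orientations (e.g.\ two incoming edges from two rectangles $A,B$) or a circle is doubly connected in a way that is resolved by the three-term relation. Applying relation 2 of Lemma~\ref{cohom_relations} with one of the three vertices replaced by the circle at $v$ (and noting that the third summand vanishes because it contains a rectangle with no attached circle) rewrites $T_1 \cup T_2$ as a sum of trees whose edges out of $v$ are compatible. Iterating at each defective circle yields an expression as a signed sum of admissible forests, which is the content of relation (6a). The main obstacle will be the careful bookkeeping of signs and orderings in this last step, since the three-term relation moves indices within the orientation set; this is where most of the work lies, and one must check that the orientations produced by the relation match the coorientations induced by the transverse intersection. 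The remaining content of the theorem then follows from combining these cases.
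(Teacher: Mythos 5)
Your overall strategy is the same as the paper's: represent each forest by a cooriented chain with boundary in the complement, compute the cup product as the intersection of the two chains, and use the relations of Lemma~\ref{cohom_relations} to rewrite an inadmissible union. Cases (2), (3), (5) and (6) are argued at essentially the paper's level of detail (for (2) and (3) the paper's actual mechanism is to reorient edges via relation 1(c) so that the two chains become disjoint; your ``generic perturbation of the inequality directions'' is a reasonable gloss of this). However, two of your case justifications are genuinely wrong.

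For case (1), your codimension claim is backwards and, in the critical situation, false. If $A\neq B$ share exactly one index, then the intersection is forced into the diagonal where all points of $A\cup B$ coincide, whose codimension is exactly $\mathrm{codim}\,C(T_1|_A)+\mathrm{codim}\,C(T_2|_B)$ --- there is no excess at all, so no small perturbation makes the intersection empty, and a dimension count cannot give vanishing. The correct reason is not dimensional: because the label sets of rectangles are ``critical minus one element,'' forcing all of $A\cup B$ equal lands in a forbidden stratum, so the intersection lies outside $\M(\vec n)$ and hence represents zero (this uses that $I$ is decreasing); when $A=B$ one instead translates one chain slightly off the common diagonal. For case (4), your claim that a rectangle with no circles attached ``corresponds to a forbidden critical tuple, so the chain sits outside $\M$'' is incorrect: such a rectangle contains a critical tuple minus one element of the top color, which lies in $I$, so the associated chain is a perfectly good chain inside $\M(\vec n)$ (this is exactly how such rectangles arise in $T_1\cup T_2$, when all circles formerly attached to a rectangle of $T_1$ become rectangles of $T_2$). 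Its class vanishes for a different reason, namely the exchange relation of Lemma~\ref{cohom_relations} specialized to the case where no circles remain attached, which is how the paper disposes of this case. As written, your argument would prove vanishing of a chain that is not in fact supported in the complement, so this step needs to be replaced by the relation-based argument.
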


\begin{proof}
\begin{enumerate}
\item If $A \neq B$, then the corresponding chains to $T_1$ and $T_2$ do not intersect in $\M (\vec{n})$. If $A = B$, then one can perturb the chains slightly so that they do not intersect.
\item There exist orientations of edges such that the two corresponding chains do not intersect in $\M (\vec{n})$.
\item There exist orientations of edges such that the two corresponding chains do not intersect in $\M (\vec{n})$.
\item This is relation 4 from Lemma \ref{cohom_relations} with $r = 0$.
\item The chains corresponding to $T_1$ and $T_2$ are transversal and their intersection is the chain corresponding to $T_1 \cup T_2$.
\item Combine the proofs of (5) and relation 2 from Lemma \ref{cohom_relations}.
\end{enumerate}
\end{proof}

A similar construction works if we remove the condition $\sum_{i=1}^m n_i \leq 2$ implies $\vec{n} \in I$. When it comes to homology, the only additional complication arises in the proof of Theorem \ref{hom_basis}. In the second subcase of case 3, $[x_{i_{k, s_k}}^k, x_i^j]$ may be nonzero. If this is the case, it can be replaced by $\{x_{i_{k, s_k}}^k, x_i^j\}$. This can then be written in the desired form using subcase 1 of case 3.

The majority of complications arise in cohomology. This is because now rectangles may have a single coordinate in them. When it comes to the corresponding chain, there is no difference between rectangles containing one element and circles. Thus, we may allow rectangles to have no circles attached to them and add the relation that if a rectangle with one coordinate in it is attached to at most one rectangle and nothing else, it may be turned into a circle. Similarly, we may turn a circle into a rectangle provided such rectangles are allowed. With these changes, linear $I$-trees still form a basis . The only change in multiplication is condition 1 from Theorem \ref{mult} must additionally assume that $A$ and $B$ both have weight at least $2$.

\section{Bicolored Configuration Spaces}\label{bicolored}

As in the previous section, we will assume that for all $\vec{n} = (n_1, n_2)$ with $\sum_{i=1}^2 n_i \leq 2$, we have $\vec{n} \in I$. In this section, the complications that arise are worse than those in the previous section. At the end of this section, we will again comment on these.

Throughout this section, because we only have two colors, to reduce clutter we will write $x_i$ for $x^1_i$ and $y_i$ for $x^2_i$.

\subsection{Homology of $\M$} \label{hom_2}

Again, we will be concerned with homology with $\Z_2$ coefficients, ignoring the orientations of homology representatives. A generalization to $\Z$ coefficients is straightforward if one is careful with signs.

As is evident in the statement of Theorem \ref{main_2}, there are two non-trivial building blocks in $H_* \M$: 
\begin{itemize}
\item elements from $H_{(w_{(\ell_1, \ell_2)}-1)d-1} \M(\ell_1, \ell_2)$ for $(\ell_1, \ell_2) \in \C$
\item elements from $H_{(w_{(\ell_1, \ell_2)}+1)d-2} \M(\ell_1 + 1, \ell_2 + 1)$ for $(\ell_1, \ell_2) \in \D$
\end{itemize}

The first case was discussed in section \ref{hom_1}.

Determining the homotopy type of $H_{(w_{(\ell_1, \ell_2)}+1)d-2} \M(\ell_1 + 1, \ell_2 + 1)$ for $(\ell_1, \ell_2) \in \D$ requires more information about $I$ than just knowing $(\ell_1, \ell_2)$ is in $\D$. For instance, in the case that neither of $(\ell_1 + 1, 0)$ or $(0, \ell_2 + 1)$ are in $I$, then $\M(\ell_1 + 1, \ell_2 + 1)$ is a product of a no-$(\ell_1 + 1)$-equal space and a no-$(\ell_2 + 1)$-equal space. In this case, it is not difficult to check that $H_{(w_{(\ell_1, \ell_2)}+1)d-2} \M(\ell_1 + 1, \ell_2 + 1)= 0$. However, if at least one of $(\ell_1 + 1, 0)$ or $(0, \ell_2 + 1)$ is in $I$ (which is what we assume when we assume $I$ is not rectangular), then $H_{(w_{(\ell_1, \ell_2)}+1)d-2} \M(\ell_1 + 1, \ell_2 + 1) \neq 0$. In the case that $(\ell_1 + 1, 0) \in I$, a non-trivial class in $H_{(w_{(\ell_1, \ell_2)}+1)d-2} \M(\ell_1 + 1, \ell_2 + 1)$ is given by a product of spheres satisfying the equations:
\begin{align*}
\displaystyle c+ \sum_{i=1}^{\ell_1+1} x_i = 0 \hspace{.4 in}
\displaystyle |c|^2 + \sum_{i=1}^{\ell_1+1} |x_i|^2 = 1\\
\displaystyle \sum_{j=1}^{\ell_2+1} (y_j - c) = 0 \hspace{.4 in}
\displaystyle \sum_{j=1}^{\ell_2+1} |y_j - c|^2 = \epsilon
\end{align*}

\begin{defin}
Denote the above product of spheres by $\{x_1, \ldots, x_{\ell_1 + 1}, \{y_1, \ldots, y_{\ell_2 + 1}\} \}$.
\end{defin}

To see that $\{x_1, \ldots, x_{\ell_1+1}, \{y_1, \ldots, y_{\ell_2+1}\} \}$ is non-zero, we will again consider a chain in $\M(\ell_1+1, \ell_2+1)$. Consider the chain given by the following equations:

\begin{align*}
x_1 = \ldots = x_{\ell_1} &= y_1 = \ldots y_{\ell_2} \\
(x_1)_1 &< (x_{\ell_1+1})_1 \\
(x_1)_{\ell} &= (x_{\ell_1+1})_{\ell}\text{ for all }\ell > 1 \\
(x_1)_1 &< (y_{\ell_2+1})_1 \\
(x_1)_{\ell} &= (y_{\ell_2+1})_{\ell}\text{ for all }\ell > 1
\end{align*}

Again, the boundary of this chain is in the complement to $\M(\ell_1 + 1, \ell_2 + 1)$. It is not hard to check that the intersection pairing between the corresponding class in $H^*(\M(\ell_1+1, \ell_2+1), \Z_2)$ and $\{x_1, \ldots, x_{\ell_1+1}, \{y_1, \ldots, y_{\ell_2+1}\} \}$ is non-zero.

\begin{defin}
Define \emph{local classes} to be classes of one of the following forms:
\begin{itemize}
\item $\{x_1, \ldots, x_{\ell_1}, y_1, \ldots, y_{\ell_2}\} \in H_{(w_{(\ell_1, \ell_2)}-1)d-1} \M(\ell_1, \ell_2)$ for $(\ell_1, \ell_2) \in \C$
\item $\{x_1, \ldots, x_{\ell_1+1}, \{y_1, \ldots, y_{\ell_2+1}\} \} \in H_{(w_{(\ell_1, \ell_2)}+1)d-2} \M(\ell_1 + 1, \ell_2 + 1)$ for $(\ell_1, \ell_2) \in \D$
\item $x_1 \in H_0 \M(1, 0)$
\item $y_1 \in H_0 \M(0, 1)$
\end{itemize}
\end{defin}

As mentioned in the previous section, the action of $H_* \mathcal{M}_d$ on $H_* \M$ is very similar to the action of $H_* \mathcal{M}_d$ on itself. Recall, if $B_1$ and $B_2$ are two elements of $H_* \M$, a representative for $[B_1, B_2]$ is given by considering a representative for $[x_1, x_2]$ and replacing $x_i$ with sufficiently scaled representatives of $B_i$. We will show that all homology classes of $H_* \M$ can be built up using the left action of $\mathcal{M}_d$ on local classes.


Again, our proof will use a more general space. We will again consider the homotopy of an $s$-chain moving one of the points away from the others. The $(s+1)$-chain produced from this homotopy may intersect forbidden subspaces. In the decreasing setting, these subspaces do not intersect. In the bicolored setting, these subspaces may intersect. This is what produces the products of spheres mentioned above and what forces us to introduce more than one type of new point.

Let $\ell_I = |\D|$. We will consider a system of points colored with $\ell_I + 3$ colors. Throughout the proof, we will assume that for all $(\ell_1, \ell_2) \in \D$, we have $(\ell_1+1, 0) \in I$. At the end of this section, we will comment what changes occur if this is not the case. To emphasize the importance of the added colors, we will refer to color $3$ points by $z_i$ and color $i+3$ points by $^iw$.

Let $(\alpha_i, \beta_i)$ be an enumeration of the elements of $\D$. Consider $I' = \{ (a, b, 0, \ldots, 0) | (a,b) \in I \} \cup \{ (0, 0, 1, 0, \ldots, 0) \} \cup \{ (a,0,0, \bar{1}_i ) | a \leq \alpha_i \} \subset \N^{\ell_I + 3}$ where $\bar{1}_i$ has all zeros except for a $1$ in the $i^{th}$ coordinate. We will be concerned with the polychromatic configuration space ${\Mp}(n_1, \ldots, n_{\ell_I+3})$. In addition to classes previously mentioned, we will also consider classes of the form $\{ x_1, \ldots, x_{\alpha_i + 1},$ $^iw_1\}$. These are defined analogously to previous local classes.

\begin{defin}
Define \emph{augmented local classes} to be classes of one of the following forms:
\begin{itemize}
\item $\{x_1, \ldots, x_{\ell_1}, y_1, \ldots, y_{\ell_2}\}$ for $(\ell_1, \ell_2) \in \C$
\item $\{x_1, \ldots, x_{\ell_1+1}, \{y_1, \ldots, y_{\ell_2+1}\} \}$ for $(\ell_1, \ell_2) \in \D$
\item $\{ x_1, \ldots, x_{\alpha_i + 1},$ $^iw_1\}$ for $(\alpha_i , \beta_i) \in \D$
\item An element of $H_0 (\vec{e}_j)$ for some $j \leq \ell_I + 3$
\end{itemize}
\end{defin}

We will prove the following:

\begin{theorem}
\label{hom_proof_2}
The left module $H_* \Mp (\cdot, \ldots,  \cdot)$ is generated by augmented local classes.
\end{theorem}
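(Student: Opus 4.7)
The plan is to adapt the homotopy argument of Theorem \ref{hom_proof} to the bicolored setting, with the auxiliary colors $z$ and ${}^{i}w$ introduced precisely to encode the new types of links that arise from $\D$-type configurations. The induction is on $n_2$ (and within that, on the total point count), with the base case $n_2 = 0$ reducing to $\Mp(n_1, 0, n_3, \ldots, n_{\ell_I+3})$. The restriction of $I'$ to colors $1, 3, 4, \ldots, \ell_I+3$ can be verified to be decreasing after an appropriate reordering of colors, and its critical tuples correspond to the augmented local classes of $\C$-type involving only $x$'s, the ${}^{i}w$-type classes (which are critical in $I'$ precisely because of the assumption $(\alpha_i+1, 0) \in I$ for $(\alpha_i, \beta_i) \in \D$), and the singleton $z$'s. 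Hence the base case follows from Theorem \ref{hom_proof} applied to this sub-ideal.

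For the inductive step, let $\gamma$ be a closed $s$-chain in $\Mp(\vec{n})$ with $n_2 > 0$ and consider the straight-line homotopy $\gamma_t = \gamma + tv$ translating only the $y_{n_2}$ coordinate. For $t = M$ sufficiently large, $\gamma_M$ factors as a chain in $\Mp$ with one fewer $y$ multiplied by the singleton $y_{n_2}$, hence is organized by induction. The homotopy trace $\Gamma$ is an $(s+1)$-chain that may cross forbidden strata involving $y_{n_2}$; excising small tubular neighborhoods of these crossings yields a relation expressing $[\gamma]$ as $[\gamma_M]$ plus link contributions, so it suffices to show each link contribution is organized.

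The crossings fall into three families. First, at a crossing where $y_{n_2}$ meets $\ell_1$ $x$'s and $\ell_2$ $y$'s with $(\ell_1, \ell_2+1) \in \C$, the link is a sphere and the tubular contribution takes the form $N|_{z_{\mathrm{new}} = \{x_{i_1}, \ldots, x_{i_{\ell_1}}, y_{j_1}, \ldots, y_{j_{\ell_2}}, y_{n_2}\}}$ for $N$ in a space with strictly fewer $y$'s, organized by induction. Second, at a crossing of the auxiliary singleton loci $y_{n_2} = z_j$ or $y_{n_2} = {}^{i}w_j$, the link is a small $(d-1)$-sphere and the contribution is a bracket $N|_{a = [y_{n_2}, a]}$, exactly as in the decreasing case. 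Third, at a $\D$-type crossing where $y_{n_2}$ joins a cluster of $\ell_1$ $x$'s and $\ell_2$ $y$'s with $(\ell_1, \ell_2) \in \D$ and $(\ell_1, \ell_2+1) \notin I$, the link is not a single sphere but a product of two spheres, matching the $\D$-type augmented local class $\{x_{i_1}, \ldots, x_{i_{\ell_1+1}}, \{y_{j_1}, \ldots, y_{j_{\ell_2}}, y_{n_2}\}\}$; this contribution is realized by substituting the inner $y$-cluster into the ${}^{i}w$-slot of an $\{x, \ldots, {}^{i}w\}$ augmented local class, reducing to an organized class by the inductive hypothesis.

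The principal obstacle is the third family of crossings, which has no analogue in the decreasing setting. The subspace being crossed is not a minimal forbidden stratum of $I'$ (since $(\ell_1-1, \ell_2+1) \notin I$ while $(\ell_1, \ell_2) \in I$, so the stratum sits at the confluence of two non-critical forbidden directions), and identifying its link as the sphere-of-spheres class requires careful local analysis of the arrangement geometry; one must show that the transverse link decomposes as the successive collapse of the $y$-cluster onto a center, followed by the collapse of that center together with the surrounding $x$-cluster. The auxiliary ${}^{i}w$ colors are introduced precisely to bookkeep these inner collapses: the assumption $(\ell_1+1, 0) \in I$ for $(\ell_1, \ell_2) \in \D$ ensures that $(\alpha_i+1, 0, 0, \bar{1}_i)$ is a critical tuple of $I'$, so the ${}^{i}w$-type classes are genuine critical building blocks whose $y$-substitution recovers exactly the $\D$-type sphere-of-spheres, completing the reduction.
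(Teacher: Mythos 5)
Your overall architecture (induct, push one coordinate off to infinity, excise the forbidden strata the trace crosses, and identify each link contribution with an augmented local class) is the right first instinct, and your base case is essentially fine, but the inductive step has a genuine gap exactly at the point you flag as the ``principal obstacle.'' The step ``excising small tubular neighborhoods of these crossings yields a relation'' is not available for an arbitrary closed chain $\gamma$ in the bicolored setting: unlike the decreasing case, the forbidden subspaces met by the trace of $y_{n_2}$ (the various loci $y_{n_2}=x_{i_1}=\cdots=x_{i_b}=y_{j_1}=\cdots=y_{j_c}$) intersect one another, so disjoint tubular neighborhoods need not exist and the ``link'' of a crossing need not be a sphere or a product of spheres at all. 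Likewise, identifying the $\D$-type contribution with $\{x_{i_1},\ldots,x_{i_{\ell_1+1}},\{y_{j_1},\ldots,y_{n_2}\}\}$ (or with a $^iw$-substitution) presupposes a two-scale picture near the crossing --- the $y$-cluster collapsing at a scale much smaller than its distance to the $x$-cluster --- and an arbitrary chain gives you no such separation. Your proposal acknowledges that ``careful local analysis'' is needed but supplies no mechanism to produce it, and that mechanism is the actual content of the proof.

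The paper's argument is structured around exactly this point: before any coordinate is pushed to infinity in the main induction, the chain is preprocessed by Lemma \ref{main_lemma} (proved in the appendix via Lemmas \ref{minor_lemma_1}--\ref{minor_lemma_4}), which writes $[\gamma]$ as an organized class plus a chain whose complexity functions $g_0,g_1$ are bounded and which satisfies explicit metric separation conditions $0<\epsilon_1<\epsilon_2$ on distances between $y$-clusters and nearby $x$'s. Those lemmas are where the $\D$-type strata are actually handled (by excising sets of the form ``$y$'s equal and the $m_a$-th closest $x$ within $\epsilon_2$,'' whose boundaries are shown disjoint from the trace using the $\epsilon$-conditions), and where the substitution of a $y$-cluster into a $^iw$-slot is justified. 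Only after this reduction does the main induction move $x_1$ (not $y_{n_2}$), and the point of the preprocessing is that then only the three benign collision types ($x_1$ with $f_I(1)+1$ $y$'s, $x_1=z_j$, and $x_1$ with $x$'s and a $^mw_j$) can occur, so the naive excision argument finally applies. Without an analogue of this complexity-and-scale control, your single homotopy of $y_{n_2}$ does not yield the claimed decomposition, so the proof as proposed does not go through.
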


As a corollary of this theorem, we get Theorem \ref{main_2}. For convenience, we define the following:

\begin{defin}
Call a class \emph{organized} if it can be written as a sum of products of augmented local classes.
\end{defin}

The main idea in the proof is that for each closed $s$-chain, $\gamma$, we write $\gamma$ as the sum of two closed $s$-chains: one that is organized and one that is less complex for some suitable measure of complexity.

\begin{defin}
For $n_2 > 0$, let $g_0(\gamma) = \sup \{ k :  \exists \text{ distinct } j_1, \ldots, j_k \text{ such that } \gamma \cap \{ y_{j_1} = \ldots = y_{j_k} \} \neq \emptyset \}$. In the case $n_2 = 0$, define $g_0(\gamma)$ to be $0$.

Let $g_1(\gamma) = \sup \{ k :  \exists i, j_1, \ldots, j_k \text{ such that } \gamma \cap \{ x_i = y_{j_1} = \ldots = y_{j_k} \} \neq \emptyset \}$.
\end{defin}

Ideally $g_0$ would be our measure of complexity; however, this is difficult to achieve, so we settle for $g_1$. Nonetheless, while we decrease $g_1$, we still want to control $g_0$. In our proof, we write each closed $s$-chain as a sum of two chains: one that is organized and one with lesser $g_1$ and suitably bounded $g_0$. Eventually, $g_1$ cannot get any smaller, and we show that this chain is organized.

Before stating our main lemma, there is one more function we define:

\begin{defin}
Let $f_I : \N \to \N\cup\{-\infty, \infty\}$ be defined by $f_I(n) = \sup \{m : (n,m) \in I\}$.
\end{defin}

\begin{lemma}
\label{main_lemma}
Suppose $\tilde{n}_1 > 0$ and organized classes span $H_* \Mp(\tilde{n}_1, \ldots, \tilde{n}_{\ell_I+3})$ for all $(\tilde{n}_1, \tilde{n}_2) < (n_1, n_2)$. Let $\gamma$ be any closed $s$-chain in $\Mp (n_1 ,\ldots, n_{\ell_I+3})$. For all $a \in \N$, $[\gamma] = [\gamma_1^a] + [\gamma_2^a]$ where $[\gamma_1^a]$ is organized and $\gamma_2^a$ satisfies one the following:
\begin{itemize}
\item $[\gamma_2^a]=0$
\item $g_0(\gamma_2^a) \leq f_I(1)+1$ and $ g_1(\gamma_2^a) \leq f_I(a)$
\end{itemize}
\end{lemma}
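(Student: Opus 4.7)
The plan is to induct on $a$, at each stage performing a homotopy that moves a single coordinate of a chosen representative of $\gamma$ far away from the others and then removing a tubular neighborhood around every intersection of the resulting $(s+1)$-chain with a forbidden subspace. Each tube boundary will represent an augmented local class substituted into some chain in a polychromatic space with strictly smaller $(n_1,n_2)$, and by the outer inductive hypothesis such chains are already organized. Thus the contributions of the removed tubes are organized, and the chain $\Gamma$ minus the tubes realizes a homology from $\gamma$ to a sum of organized pieces plus a residue whose complexity measures $g_0$ and $g_1$ can be controlled by which coordinate we push. This is structurally the same argument as in the proof of Theorem \ref{hom_proof}, but here the presence of $\D$-type elements forces us to also account for product-of-spheres local classes, which is the reason the auxiliary colors $z$ and $^iw$ were introduced.

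For the base case $a=0$, the only nontrivial content is $g_0(\gamma_2^0) \leq f_I(1)+1$, since $g_1(\gamma_2^0) \leq f_I(1) \leq f_I(0)$ holds automatically for any chain in $\Mp$. I would successively apply homotopies of the form $\gamma_t = \gamma + tv$ with $v$ nonzero only in a chosen $y$-coordinate that participates in a maximal $y$-coincidence, pushing it far from the others. The forbidden crossings of the resulting $(s+1)$-chain come in two shapes: those where the moved coordinate coincides with a subset of points whose color vector lies in $\C$, giving tube boundaries of the form $N|_{z = \{x_1,\dots,x_{\ell_1}, y_1, \dots, y_{\ell_2}\}}$; and those where the subset lies in $\D$, giving tube boundaries of the form $N|_{z = \{x_1,\dots,x_{\ell_1+1},\{y_1,\dots,y_{\ell_2+1}\}\}}$ or the analogous expression with an $^iw$-coordinate. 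In each case $N$ lives in a polychromatic space with strictly smaller $(n_1,n_2)$ and is therefore organized by the outer hypothesis, so the tube contributions are organized and the residual $g_0$ drops.

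For the inductive step $a \geq 1$ I may assume $f_I(a) < f_I(a-1)$. I would apply the lemma at $a-1$ to write $\gamma = \gamma_1^{a-1} + \gamma_2^{a-1}$ with $g_0(\gamma_2^{a-1}) \leq f_I(1)+1$ and $g_1(\gamma_2^{a-1}) \leq f_I(a-1)$, then choose an $x$-coordinate of $\gamma_2^{a-1}$ participating in a maximal $x$-$y$ coincidence and push it far away. The forbidden crossings of the resulting $(s+1)$-chain are again only of the two flavors above, so removing tubes and invoking the outer hypothesis on smaller $(n_1,n_2)$ produces organized contributions. The new residue has $g_1 \leq f_I(a)$ because the separated $x$ no longer participates in any $x$-$y$ coincidence of size exceeding $f_I(a)$, and the bound $g_0 \leq f_I(1)+1$ is preserved since only an $x$-coordinate was moved.

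The main obstacle I anticipate is the careful classification of the forbidden crossings of $\Gamma$. I must show that every such crossing yields, on the tube boundary, an augmented local class of exactly one of the permitted flavors, substituted into a strictly smaller polychromatic chain. This relies on the definition of $\D$ as $\{(n,m) \in I : (n+1,m),(n,m+1) \notin I \cup \C\}$, combined with the design of $I'$ which places an $^iw$-color precisely at each $\D$-critical threshold $\alpha_i$, so that a crossing of multiplicity $(\ell_1+1,\ell_2+1)$ with $(\ell_1,\ell_2) \in \D$ is accounted for by either the $z$-substituted product-of-spheres class or the $^iw$-substituted variant. The delicate bookkeeping will be verifying that after pushing one coordinate away, no intersection survives in the residue that violates $g_0 \leq f_I(1)+1$ or $g_1 \leq f_I(a)$; this amounts to a case analysis of which parts of $\Gamma$ are absorbed into the organized tube contributions and which contribute to the residue, and is the step most sensitive to the fact that $I$ is only bicolored and not decreasing.
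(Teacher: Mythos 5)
There is a genuine gap, and it sits exactly where you deferred the ``delicate bookkeeping.'' Your proposal asserts that every forbidden crossing of the pushed chain $\Gamma$ is of one of two flavors --- a coincidence whose multiplicity vector lies in $\C$, or one at a $\D$-corner --- so that every tube boundary is an augmented local class substituted into a chain of strictly smaller $(n_1,n_2)$ and hence organized. That is false: when you push a coordinate away, $\Gamma$ also meets forbidden subspaces of intermediate multiplicity, e.g. $x_q = x_{i_2} = \ldots = x_{i_b} = y_{j_1} = \ldots = y_{j_c}$ with $(b,c)\notin I$ but $(b,c)$ neither critical nor adjacent to $\D$. The tube boundary there is not a substituted local class, and worse, these subspaces overlap one another, so one cannot even choose disjoint tubular neighborhoods. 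The paper does not try to make these contributions organized; it declares their union's boundary to be the \emph{residue} $\gamma_2$ and proves only that the residue's complexity measures $g_0,g_1$ (plus auxiliary metric data) improve. Relatedly, your inductive step pushes a single $x$-coordinate ``participating in a maximal coincidence,'' but $g_1\leq f_I(a)$ is a condition on all coordinates simultaneously: the paper runs an inner induction over every coordinate of the relevant color, choosing radii small enough that properties already secured for earlier coordinates are not destroyed.

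The second missing ingredient is the mechanism that makes the hard case work at all. When $f_I$ drops by more than one at $a$ and $f_I(a-1)=f_I(a)$ (so $(a,f_I(a))\in\D$), the crossings $y_q = x_{i_1} = \ldots = x_{i_b} = y_{j_2} = \ldots = y_{j_{f_I(a)+1}}$ with $m_a \leq b \leq a$ produce boundary classes involving the product-of-spheres generators, and whether such a class is organized or null-homologous depends on how far the nearby $x$-coordinates are --- this is why the paper first establishes, in Lemma \ref{minor_lemma_1}, the $\epsilon_1 < \epsilon_2$ separation conditions via the distance functions $d^{(b,c)}_{\gamma,A}$ and the quantity $m_a$, then in Lemma \ref{minor_lemma_2} removes a neighborhood of the truncated set $T$ (cut off by the condition that the $m_a$-th closest $x$ is within $\epsilon_2$) so that $\Gamma$ misses $\partial T$ and the boundary is an ${}^i w$-substituted class, and only then finishes with Lemmas \ref{minor_lemma_3} and \ref{minor_lemma_4}, the whole reduction being organized by a four-way case split on the behavior of $f_I$ at $a-1,a,a+1$. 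None of this (nor a substitute for it) appears in your outline, and without it the step ``the new residue has $g_1 \leq f_I(a)$'' is unjustified; in fact the paper can only lower $g_1$ from $f_I(a)$ to $f_I(a+1)$ through this multi-lemma cascade, not in one pass.
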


The proof of this lemma involves four lemmas and is left to the appendix. Given any $a \in \N$ and any closed $s$-chain $\gamma$ such that $g_0(\gamma) \leq f_I(1)+1$ and $g_1(\gamma) \leq f(a)$, these lemmas allow us to write $[\gamma] = [\gamma_1] + [\gamma_2]$ where $[\gamma_1]$ is organized and either $[\gamma_2] = 0$ or $g_0(\gamma_2) \leq f_I(1)+1$ and $g_1(\gamma_2) \leq f_I(a+1)$.

Before proving Theorem \ref{hom_proof_2}, we recall some notation that we will use.

\begin{defin}
For any $N \in H_* \M(\vec{n})$, let $N|_{a = A}$ be the class in $H_* \M(\vec{n}')$ given by substituting $A$ for $a$ where $a$ is some $z$ or $w$ coordinate and $A$ is some $s$-chain $\M$.
\end{defin}

Note: such a substitution does not always produce a class in $H_* \M (\vec{n}')$. However, we will be sure to only make substitutions that do.


We now prove Theorem \ref{hom_proof_2}. 
\begin{proof}
We will show that for all $(n_1, \ldots, n_{\ell_I+3})$, organized classes span $H_* \Mp(n_1, \ldots, n_{\ell_I+3})$. This will be done by induction on $(n_1, n_2)$. First suppose $n_1 = 0$. Because no $^iw$ may equal any $y$ or $z$ coordinate, we may treat them as $z$ coordinates and apply Theorem \ref{hom_proof}.

Now suppose $n_1>0$ and organized classes span $H_* \Mp(\tilde{n}_1, \ldots, \tilde{n}_{\ell_I+3})$ whenever $(\tilde{n}_1, \tilde{n}_2) < (n_1, n_2)$. Let $\gamma$ be a closed $s$-chain in $\Mp(n_1, \ldots, n_{\ell_I+3})$. Let $a = \min\{b | f_I(b) = f_I(n) \forall n>b\}$

By Lemma \ref{main_lemma}, $[\gamma] = [\gamma_1^a] + [\gamma_2^a]$ where $[\gamma_1^a]$ is organized and $\gamma_2^a$ satisfies one the following:
\begin{itemize}
\item $[\gamma_2^a]=0$
\item $g_0(\gamma_2^a) \leq f_I(1)+1$ and $ g_1(\gamma_2^a) \leq f_I(a)$
\end{itemize}

In the first instance, $[\gamma]$ is an organized class. Thus, assume the second case holds. We cannot have $g_1(\gamma_2^a) \leq -\infty$; thus, it must be the case that $f_I(a) \geq 0$. Consider the homotopy of $\gamma_2^a$ affecting only the first $x$ coordinate, $\gamma_t = \gamma + v \cdot t$ where $v$ is a vector that is non-zero only in the $x^1_{1}$ coordinate.. The only forbidden subspaces it may intersect are:
\begin{center}
$x_1 = y_{j_1} = \ldots = y_{j_{f_I(1)+1}}$\\
$x_1 = z_j$ \\
$x_1 = x_{i_2} = \ldots = x_{i_{f_I(\alpha_m)+1}} =$ $^mw_j$\\
\end{center}

As before, remove small tubular neighborhoods of each of these subspaces. The first case produces $N|_{z_{n_3+1} = \{ x_1, y_{j_1}, \ldots, y_{j_{f_I(1)+1}} \}}$ where $N \in H_*\Mp(n_1-1, n_2-(f_I(1)+1), n_3+1, \ldots, n_{\ell_I+3})$. The second case produces $N|_{z_{j} = [ x_1, z_j]}$ where $N \in H_*\Mp(n_1-1, n_2, n_3, \ldots, n_{\ell_I+3})$. The third case produces $N|_{z_{n_3+1} = \{ x_1, x_{i_2}, \ldots, ^mw_j \}}$ where $N \in H_*\Mp(n_1-(f_I(\alpha_m)+1), n_2, n_3+1, \ldots, n_{m+3}-1, \ldots, n_{\ell_I+3})$. For $t=M$, we get $N \cdot x_1$ where $N \in H_*\Mp(n_1-1, n_2, n_3, \ldots, n_{\ell_I+3})$. Inductively, all of these are organized classes. Thus, $\Gamma$ with its intersection with these tubular neighborhoods removed allows us to write $[\gamma]$ as a sum of organized classes. Thus, for all $(n_1, \ldots, n_{\ell_I+3})$, organized classes span $H_* \Mp(n_1, \ldots, n_{\ell_I+3})$.

Thus, Theorem \ref{hom_proof_2} holds.

\end{proof}

Theorem \ref{hom_proof_2} produces a generating set for $H_*\M(n_1, n_2)$; we would like a basis. For this, relations between various elements in the generating set are needed. The following relations are for $d>1$. Some of these relations involve elements that are not organized, but their meanings should be apparent. Additionally, some of the terms shown may be zero depending on $I$. Let $X = \{ x_i | i \leq n_1\}, Y = \{ y_i | i \leq n_2\}$

\begin{lemma}
\label{hom_relations_2}
Whenever $d>1$, the elements of $H_* ( \M(n_1, n_2), \Z_2)$ satisfy the following relations:
\begin{enumerate}

\item If $(n-1, m), (n, m-1) \in \C$, then
\begin{align*}
\displaystyle \sum_{i=1}^{n+m} [\{z_1, \ldots, \hat{z}_i, \ldots, z_{n+m}\}, z_i] = 0
\end{align*}
where $\{ z_1, \ldots, z_n \} \subset X, \{z_{n+1}, \ldots, z_{n+m}\} \subset Y$.
	
\item
	\begin{enumerate}
	\item If $(n-1, m) \in \C, (n, m-1) \in I$, then
	\begin{align*}
	\displaystyle \sum_{i=1}^{n} [\{x_1, \ldots, \hat{x}_i, \ldots, x_{n}, y_1, \ldots, y_m\}, x_i] = 0
	\end{align*}
	
	\item Similar relation for $(n, m-1) \in \C, (n-1, m) \in I$
	\end{enumerate}	

\item If $(n, m) \in \C$, $(n+1, m-2) \notin I$, then

\begin{enumerate}

\item
$\displaystyle \sum_{i=1}^{n+1}[\{x_1, \ldots, \hat{x}_i, \ldots, x_{n+1}, y_1, \ldots, y_m\}, x_i] = \{y_1, \ldots, y_m, \{ x_1, \ldots, x_{n+1}\}\}$

\item
$\displaystyle \sum_{i=1}^{m+1} [\{y_1, \ldots, \hat{y}_i, \ldots, y_{m+1}, \{ x_1, \ldots, x_{n+1}\}\}, y_{i}] = [\{y_1, \ldots, y_{m+1}\}, \{ x_1, \ldots, x_{n+1}\}]$\\
(Similar relations for $(n, m) \in \C, (n-2, m+1) \notin I$)

\end{enumerate}

\item If $(n-1, m-1) \in \D, (n, 0) \in I$, then
\begin{align*}
\displaystyle \sum_{i=1}^{n+1} [\{x_1, \ldots, \hat{x}_i, \ldots, x_{n+1}, \{y_1, \ldots, y_m\}\}, x_{i}] = [\{x_1, \ldots, x_{n+1}\}, \{y_1, \ldots, y_m\}]
\end{align*}
(Similar relation for $(n-1, m-1) \in \D, (0, m) \in I$)

\item  If $(n-1, m-1) \in \D, (n, 0) \in I$, then
\begin{align*}
\displaystyle [\{x_1, \ldots, x_{n}, \{y_1, \ldots, y_m \} \}, y_{m+1}] + [\{x_1, \ldots, x_n, y_{m+1} \}, \{y_1, \ldots, y_m \}] = \{x_1, \ldots, x_n, [\{y_1, \ldots, y_m \}, y_{m+1}] \}
\end{align*}
(Similar relation for $(n-1, m-1) \in \D, (0, m) \in I$)

\item If $(n-1, m-1) \in \D, (n,0) \in I$, then
\begin{align*}
\displaystyle \sum_{i=1}^{m+1} \{x_1, \ldots, x_n, [\{y_1, \ldots, \hat{y}_i, \ldots, y_{m+1}\}, y_i]\} = 0
\end{align*}

\end{enumerate}
\end{lemma}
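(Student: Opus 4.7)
The plan is to prove each of the six relations by the same geometric strategy used in Lemma \ref{hom_relations}: exhibit a cycle $Z \subset \R^{(n_1+n_2)d}$ (a sphere, or a product of spheres) that would represent a class of $\M$ were it not for its intersections with the forbidden arrangement. Removing a small tubular neighborhood of each intersection produces a genuine chain in $\M$; since this chain arose from a cycle by cutting out interior pieces, it bounds in $\M$, and the sum of link classes read off the removed tubes must vanish. Identifying each link with the named local class or Lie bracket yields the relation.

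For Relations 1 and 2, take $Z$ to be the sphere $\sum_{i=1}^{n+m} z_i = 0$, $\sum_{i=1}^{n+m}|z_i|^2 = 1$ on the $n$ $x$-coordinates and $m$ $y$-coordinates, of dimension $(n+m-1)d-1$. The critical subspaces $Z$ can meet are exactly those where $n-1$ of the $x$'s together with all $m$ $y$'s collide (critical type $(n-1,m)$) or where all $n$ $x$'s together with $m-1$ $y$'s collide (critical type $(n,m-1)$); each tube link is the Lie bracket $[\{\text{cluster}\},z_i]$ with $z_i$ the unique coordinate outside the cluster. In Relation 1 both critical types are active, producing the full symmetric sum; in Relation 2 one of the two tuples lives in $I$ and so that stratum is allowed rather than forbidden, leaving only the other type of bracket. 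For Relation 3, enlarge $Z$ by one extra $x$, so $Z$ is a sphere on $n+1$ $x$'s and $m$ $y$'s of dimension $(n+m)d-1$. The additional critical stratum that appears is the one where all $n+1$ $x$'s coalesce to a common point while the $m$ $y$'s remain distinct around it; the hypothesis $(n+1,m-2)\notin I$ is exactly what ensures this configuration exists and is critical, and its link is the product-of-spheres class $\{y_1,\ldots,y_m,\{x_1,\ldots,x_{n+1}\}\}$ on the right-hand side. The remaining strata are $(n,m)$-collisions, contributing the Lie-bracket terms on the left.

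For Relations 4, 5, and 6, take $Z$ to be itself a product-of-spheres cycle built from the $\mathcal{D}$-skeleton for $(n-1,m-1) \in \D$, augmented by one additional coordinate on either the inner or outer sphere as appropriate: an extra $x$ for Relations 4 and 5, and an extra $y$ inside the inner cluster for Relation 6. The hypothesis $(n,0)\in I$ (which forbids the pure-$x$ cluster from being critical on its own) selects which tubes survive: Relation 4 records tubes contributing $[\{\text{inner},\{\text{outer}\}\}]$ brackets against a swept-out $x$; Relation 5 records, from an extra-$y$ sphere sweep, the Leibniz-type exchange of a $y$ between the inner cluster and the outer cluster; Relation 6 records the Jacobi-type sum obtained from a tube enumeration of the inner cluster's own critical degenerations, with the $\{x_1,\ldots,x_n,\cdot\}$ wrapper along for the ride.

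The main obstacle will be to enumerate, for each chosen $Z$, exactly which critical strata it meets and to check that the link of each tube is the stated local class or Lie bracket (and not some extraneous combination). The hypotheses accompanying each relation — that certain neighboring tuples lie in $I$ or not, and the background assumption that all tuples of weight $\leq 2$ are in $I$ — are chosen precisely so that all and only the listed terms contribute. So the core of the proof is a careful case analysis, for each of the six cycles, of the pattern of critical tuples accessible from that sphere, after which the computation is entirely parallel to the one-color argument of Lemma \ref{hom_relations}.
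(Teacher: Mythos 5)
Your strategy is essentially the paper's: each relation is obtained from a sphere or product-of-spheres cycle by removing tubular neighborhoods of the forbidden strata it meets and reading off the boundary links as the stated local classes and brackets, and your stratum enumerations for relations 1, 2, and 3(a) agree with the ones used there. The only differences are bookkeeping: for 3(b), 4, and 5 the paper first proves the relation in an augmented arrangement containing a dummy coordinate $w$ with restricted allowed collisions and then substitutes the inner cluster ($\{x_1,\ldots,x_{n+1}\}$ or $\{y_1,\ldots,y_m\}$) for $w$ — which is precisely the justification your direct product-of-spheres sweeps require — so you should treat 3(b) explicitly (your relation-3 paragraph only derives 3(a)) and correct the slip that the added outer-sphere coordinate in relation 5 is $y_{m+1}$, not an extra $x$.
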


\begin{proof}
\begin{enumerate}

\item Consider the sphere, $S$, given by the following equations:
\begin{align*}
\displaystyle\sum_{i=1}^{n+m} z_i = 0 \\
\displaystyle\sum_{i=1}^{n+m} |z_i|^2 = 1
\end{align*}
Remove from $S$ tubular neighborhoods of points on $S$ that are not in $\M$. This gives the above relation.

\item Same proof as above; it is just a different set of points removed.

\item

\begin{enumerate}
\item Consider the sphere, $S$, given by the following equations:
\begin{align*}
\displaystyle\sum_{i=1}^{n+1} x_i + \sum_{j=1}^{m} y_j = 0 \\
\displaystyle\sum_{i=1}^{n+1} |x_i|^2 + \sum_{j=1}^{m} |y_j|^2 = 1
\end{align*}
Remove from $S$ tubular neighborhoods of its intersection with the following:
\begin{center}
$x_1 = \ldots = \hat{x}_i = \ldots = x_{n+1} = y_1 = \ldots = y_m$\\
$x_1 = \ldots = x_{n+1}$
\end{center}
What is left is a chain that gives the above relation.

\item Consider the following augmented arrangement: there exists a $w$ coordinate that is not allowed to be equal to any $x$ coordinates and at most $m$ $y$ coordinates. Consider the sphere, $S$, given by the following equations:
\begin{align*}
\displaystyle w+\sum_{j=1}^{m+1} y_j = 0 \\
\displaystyle |w|^2 +\sum_{j=1}^{m+1} |y_j|^2 = 1
\end{align*}
Remove from $S$ tubular neighborhoods of its intersection with the following:
\begin{center}
$w = y_1 = \ldots = \hat{y}_j = \ldots = y_{m+1}$\\
$y_1 = \ldots = y_{m+1}$
\end{center}
This gives the relation:
\begin{align*}
\displaystyle \sum_{i=1}^{m+1} [\{y_1, \ldots, \hat{y}_i, \ldots, y_m, w \}, y_{i}] = [\{y_1, \ldots, y_{m+1}\}, w]. 
\end{align*}
Because of the limited interactions allowed for $w$, we can consider the same chain but with $\{x_1, \ldots, x_{n+1}\}$ substituted for $w$. This gives the relation.
\end{enumerate}

\item Similar proof as 3(b).

\item Consider an augmented space in which a $w$ coordinate is added. This coordinate is allowed to be equal to no $y$ coordinates and up to $n-1$ $x$ coordinates. Consider the sphere, $S$ given by the following:
\begin{align*}
\displaystyle w+\sum_{i=1}^{n} x_i + y_{m+1}= 0 \\
\displaystyle |w|^2 +\sum_{i=1}^{n} |x_i|^2 + |y_{m+1}|^2= 1
\end{align*}
Remove from $S$ tubular neighborhoods of its intersection with the following:
\begin{center}
$x_1 = \ldots = x_n = w$\\
$x_1 = \ldots = x_n = y_{m+1} $\\
$w = y_{m+1} $
\end{center}
This gives the relation:
\begin{align*}
[\{x_1, \ldots, x_{n}, w\}, y_{m+1}] + [\{x_1, \ldots, x_n, y_{m+1} \}, w] = \{x_1, \ldots, x_n, [w, y_{m+1}] \}
\end{align*}
In the above chain, we can substitute $\{y_1, \ldots, y_{m}\}$ for $w$ to get the above relation.

\item First, note that on the chain used to prove
\begin{align*}
\displaystyle\sum_{i=1}^{m+1} [\{y_1, \ldots, \hat{y}_i, \ldots, y_{m+1}\}, y_i]= 0
\end{align*}
there were never $m$ $y$ coordinates all equal. Thus, we have 
\begin{align*}
\displaystyle \{x_1, \ldots, x_n,\sum_{i=1}^{m+1} [\{y_1, \ldots, \hat{y}_i, \ldots, y_{m+1}\}, y_i]\} = 0
\end{align*}
Finally, take the sum outside of the brackets to get the above relation.
\end{enumerate}
\end{proof}

Using these relations, along with the Jacobi and anti-symmetry relations from $H_* \mathcal{M}_d$, we can find a smaller generating set for $H_*\M(n_1, n_2)$.

\begin{theorem}
\label{hom_basis_2}
For all $d>1, (n_1, n_2) \in \N^2$, let $S$ be the set of elements of $H_*\M(n_1, n_2)$ that can be written as a product where each factor is an $x_i$, a $y_j$ or of the form:
\begin{equation}
[ \ldots [ [B_1, B_2], B_3] \ldots B_{\ell}],  \ell \geq 1
\end{equation}
where each $B_s$ is of one of the following forms:
\begin{itemize}
\item
\begin{equation*}
[ \ldots [ [ \ldots [\{x_{i_1}, \ldots, x_{i_n}, y_{j_1}, \ldots, y_{j_m} \}, x_{i'_1}] \ldots x_{i'_{a}} ], y_{j'_1}] \ldots y_{j'_b}]
\end{equation*}
where $(n, m) \in \C, i_1 < \ldots < i_n, j_1 < \ldots < j_m, a,b\geq 0, i'_1 < \ldots < i'_a$, and $j'_1 < \ldots < j'_b$. Furthermore:
\begin{itemize}
\item if $(n+1, m-2) \in I$, then $i_n > i'_a$.
\item if $(n-1, m+1) \in I$, then $j_m > j'_b$.
\item if $(n+1, m-2) \notin I$ and $(n-1, m+1) \notin I$, then $i_n > i'_a$ or $j_m > j'_b$.
\end{itemize}
\item
\begin{equation*}
[ \ldots [ [ \ldots [\{x_{i_1}, \ldots, x_{i_n}, \{y_{j_1}, \ldots, y_{j_m} \} \}, x_{i'_1}] \ldots x_{i'_{a}} ], y_{j'_1}] \ldots y_{j'_b}]
\end{equation*}
where $(n-1, m-1) \in \D, i_1 < \ldots < i_n, j_1 < \ldots < j_m, a,b\geq 0, i'_1 < \ldots < i'_a$, and $j'_1 < \ldots < j'_b$. Furthermore, we require $i_n > i'_a$ and $j_m > j'_b$.
\end{itemize}
Additionally, we require the smallest $x$ index in $B_1, \ldots, B_{\ell}$ to be in $B_1$. Then $S$ is a generating set for $H_*\M(n_1, n_2)$.
\end{theorem}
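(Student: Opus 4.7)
The plan is to closely parallel the argument of Theorem \ref{hom_basis}, starting from the generating set produced by Theorem \ref{hom_proof_2} and repeatedly applying the relations of Lemma \ref{hom_relations_2} together with the Jacobi and anti-symmetry identities from $H_*\mathcal{M}_d$. By Theorem \ref{hom_proof_2}, every class in $H_*\M(n_1,n_2)$ is a sum of products of augmented local classes, and the Leibniz rule lets us push all multiplications outside of the Lie brackets. Each resulting factor is then either a singleton $x_i$ or $y_j$, or a nested Lie bracket $F$ whose leaves are singleton coordinates together with curly-bracket seeds of $\C$-type $\{x_\ast,y_\ast\}$ or $\D$-type $\{x_\ast,\{y_\ast\}\}$. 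The task reduces to putting each such $F$ into the prescribed normal form.

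For each $F$, I would induct on the total number of items (singleton coordinates plus curly-bracket seeds). Writing $F=[F_1,F_2]$, the three-case analysis of Theorem \ref{hom_basis} carries over almost verbatim: if both $F_1$ and $F_2$ have at least two items, inductively normalize each and combine via Jacobi; if $F_2$ is a curly-bracket seed, use Jacobi and anti-symmetry to shuffle the minimal $x$-index into $B_1$; if $F_2$ is a single coordinate, a single Jacobi reduces to the situation in which $F_1$ is a single $B$-block plus adjoined coordinates, so $F=[[\ldots[F',c_1]\ldots c_r],c]$ where $F'$ is the seed and $c$ is a coordinate of the same color as some $c_s$.

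The new content compared to the decreasing case is enforcing the last-position ordering constraints on each $B_s$. When $F'$ is of $\C$-type, a violation $c<c_r$ (in the natural order on a common color) is corrected by the anti-symmetric Jacobi swap; the correction term is the bracket of $F'$ with a length-two curly bracket, which Lemma \ref{hom_relations_2} parts (2) and (3) rewrite. The three regimes of the statement's conditions on $(n+1,m-2)$ and $(n-1,m+1)$ match exactly the three regimes of these relations: if $(n+1,m-2)\in I$ only $x$-swaps are available, if $(n-1,m+1)\in I$ only $y$-swaps, and if neither holds then relation (3) trades the $\C$-type seed for a $\D$-type seed, producing the second family of $B$-blocks in the statement. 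When $F'$ is already of $\D$-type, the analogous rewritings are supplied by Lemma \ref{hom_relations_2} parts (4), (5), and (6): part (4) handles a trailing $x$-coordinate, part (5) trades a trailing $y$-coordinate for one absorbed into the inner nested bracket, and part (6) annihilates the residual error term $\{x_\ast,[\{y_\ast\},y_{m+1}]\}$ that would otherwise obstruct (5).

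The main obstacle will be arranging termination of this rewriting and verifying that no correction term breaks an already-established ordering on an earlier $B_s$ or the minimal-$x$-in-$B_1$ condition. For this I would equip each $B_s$ with a well-founded order pairing the number of items in $F$ with the lexicographic order on the multiset of colors and indices of the trailing coordinates $c_1,\ldots,c_r,c$, and check that each application of Lemma \ref{hom_relations_2} either strictly decreases the item count, converts a forbidden $\C$-type situation into an admissible $\D$-type situation (the exceptional subcase of relation (3)), or strictly decreases the trailing monomial while leaving earlier blocks untouched. Once termination is in hand, iterating the procedure over every Lie bracket factor of every summand expresses an arbitrary class in $H_*\M(n_1,n_2)$ as a linear combination of elements of $S$, giving the theorem.
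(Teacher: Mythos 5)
Your proposal takes essentially the same route as the paper, which proves Theorem \ref{hom_basis_2} by noting only that the inductive Jacobi/anti-symmetry argument of Theorem \ref{hom_basis} carries over with the relations of Lemma \ref{hom_relations_2} replacing Lemma \ref{hom_relations}; your case analysis, the matching of the three ordering regimes to relations (2)--(3) (with (3) trading a $\C$-type seed for a $\D$-type one) and of the $\D$-type blocks to relations (4)--(6), and your termination measure are exactly that argument spelled out. One minor correction: under the standing assumption $(2,0),(1,1),(0,2)\in I$ the Jacobi correction term $[\,\cdot\,,[c_r,c]]$ is simply zero (the two-point sphere bounds in $\M$), so no length-two curly bracket appears there; the relations of Lemma \ref{hom_relations_2} are needed only to enforce the largest-index-inside-the-seed conditions, as in the rest of your plan.
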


The proof follows in the same manner as the proof to Theorem $\ref{hom_basis}$ in the previous section.

Again, in the case $d=1$, there are similar relations to those from Lemma \ref{hom_relations_2}. The only difference is any mention of $[B_1, B_2]$ is replaced with $B_1 \cdot B_2 + B_2 \cdot B_1$. Using these relations, we get the $d=1$ analogue to Theorem \ref{hom_basis}.

\begin{theorem}
\label{hom_basis_3}
For $d=1$ and any $(n_1, n_2) \in \N^2$, let $S$ be the set of elements of $H_* \M(n_1,n_2)$ that can be written in the form:
\begin{equation*}
A_{I_0} \cdot B_{J_1} \cdot A_{I_1} \cdot \ldots \cdot B_{J_\ell} \cdot A_{I_{\ell}}
\end{equation*}
Here, $I_0, J_1, \ldots, J_{\ell}, I_{\ell}$ is a partition of $X \cup Y$. $A_{I_s} = x_{i'_{1, s}} \cdot \ldots \cdot x_{i'_{a_s, s}} \cdot y_{j'_{1, s}} \cdot \ldots \cdot y_{j'_{b_s, s}}$ where $I_s =\{x_{i'_{1, s}}, \ldots, x_{i'_{a_s, s}}, y_{j'_{1, s}}, \ldots, y_{j'_{b_s, s}}\}$ with $i'_{1,s} < \ldots < i'_{a_s,s}$ and $j'_{1,s} < \ldots < j'_{b_s, s}$. $B_{J_s}$ is of one of the two forms:
\begin{itemize}
\item $\{x_{i_{1, s}}, \ldots, x_{i_{n_s, s}}, y_{j_{1, s}}, \ldots, y_{j_{m_s, s}} \}$
where $J_s$ is the set of elements $\{x_{i_{1, s}}, \ldots, x_{i_{n_s, s}}, y_{j_{1, s}}, \ldots, y_{j_{m_s, s}} \}$ for some $(n_s, m_s) \in \C$.

\item $\{x_{i_{1, s}}, \ldots, x_{i_{n_s, s}}, \{y_{j_{1, s}}, \ldots, y_{j_{m_s, s}} \} \}$
where $J_s$ is the set of elements $\{x_{i_{1, s}}, \ldots, x_{i_{n_s, s}}, y_{j_{1, s}}, \ldots, y_{j_{m_s, s}} \}$ for some $(n_s - 1, m_s - 1) \in \D$
\end{itemize}

In either case we have $i_{1,s} < \ldots < i_{a_s,s}$ and $j_{1,s} < \ldots < j_{b_s, s}$. Furthermore, if $B_{J_s}$ is of the first type, we have:
\begin{itemize}
\item if $(n+1, m-2) \in I$, then $i_{n_s, s} > i'_{a_s, s}$.
\item if $(n-1, m+1) \in I$, then $j_{m_s, s} > j'_{b_s, s}$.
\item if $(n+1, m-2) \notin I$ and $(n-1, m+1) \notin I$, then $i_{n_s, s} > i'_{a_s, s}$ or $j_{m_s, s} > j'_{b_s, s}$.
\end{itemize}

If $B_{J_s}$ is of the second type, we have $i_{n_s, s} > i'_{a_s, s}$ and $j_{m_s, s} > j'_{b_s, s}$. Then $S$ is a generating set for $H_* \M(n_1,n_2)$.
\end{theorem}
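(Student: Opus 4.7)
The plan is to follow the same inductive template as Theorem~\ref{hom_basis_1}, processing factors from right to left and invoking the $d=1$ analogues of the relations from Lemma~\ref{hom_relations_2}. Starting from any class in $H_* \M(n_1, n_2)$, Theorem~\ref{hom_proof_2} lets me write it as a sum of products of augmented local classes. Grouping consecutive singleton factors and consecutive curly-bracket factors, each summand takes the alternating shape $A_{I_0} \cdot B_{J_1} \cdot A_{I_1} \cdots B_{J_\ell} \cdot A_{I_\ell}$, although without the sorting or maximal-index constraints required by the theorem. First I would sort the indices inside every curly bracket $B_{J_s}$, which is automatic because each bracket is symmetric in its entries.

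Next I would attack the pairs $(B_{J_s}, A_{I_s})$ in decreasing order $s = \ell, \ell-1, \ldots, 1$: for each block, the $d=1$ forms of Lemma~\ref{hom_relations_2}~(2)--(5) (obtained by replacing every Lie bracket $[B_1, B_2]$ with $B_1 \cdot B_2 + B_2 \cdot B_1$) let me swap a singleton of $A_{I_s}$ with the matching index inside $B_{J_s}$. Each such swap either enforces the desired maximal-index clause directly or produces a side term whose rightmost $A$-index is strictly smaller, so a lexicographic induction on these indices terminates. Because every manipulation in block $s$ only touches factors with index $\leq s$, the already-canonical blocks with larger index are preserved. Sorting $A_{I_0}$ at the end is handled by the same rightmost-index trick applied at the left boundary.

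The hard part will be navigating the trichotomy in the maximal-index condition: depending on whether $(n+1, m-2) \in I$, $(n-1, m+1) \in I$, both, or neither, a different relation from Lemma~\ref{hom_relations_2} is available, and one must verify in each case that the available relation produces exactly the swap needed to enforce the appropriate clause of the theorem. For a $B_{J_s}$ of $\D$-type, the $d=1$ forms of relations (4)--(6) play the analogous role and allow me to enforce both the $x$- and $y$-maximality conditions simultaneously. The case analysis parallels the one behind Theorem~\ref{hom_basis_2} but is simplified by the absence of Lie brackets, so the induction on the rightmost $A$-index is straightforward once the appropriate relation is identified.
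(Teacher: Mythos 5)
Your proposal follows essentially the same route as the paper, which itself only sketches this proof: start from the generating set of Theorem \ref{main_2} (the corollary of Theorem \ref{hom_proof_2}), replace the relations of Lemma \ref{hom_relations_2} by their $d=1$ analogues with $[B_1,B_2]$ rewritten as $B_1\cdot B_2 + B_2\cdot B_1$, and normalize the blocks $(B_{J_s}, A_{I_s})$ by induction from $s=\ell$ down to $s=1$, exactly as in the sketch given for Theorem \ref{hom_basis_1}. The right-to-left processing, the case analysis according to whether $(n+1,m-2)$ or $(n-1,m+1)$ lies in $I$, and the observation that manipulations in block $s$ do not disturb blocks of larger index are all the ingredients the paper intends, so your plan is correct and not genuinely different.
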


The proof follows similar to past proofs.

In the next section, we show that the generating sets given in Theorems \ref{hom_basis_2} and \ref{hom_basis_3} are actually bases. As mentioned earlier, there is a slight modification if there exists $(n-1, m-1) \in \D$ such that $(n, 0) \notin I$. In this case, $\{x_1, \ldots, x_n, \{y_1, \ldots, y_m\} \}$ does not live in $\M(n_1, n_2)$. If in addition $(0, m) \notin I$, then, as mentioned earlier, $\M(n_1, n_2)$ is a product of two no-$k$-equal spaces. The homology of $\M(n_1, n_2)$ can be determined using this structure. In this case, there is a generating set similar to that in Theorem \ref{hom_basis_2} where the second type of $B_s$ is never present (similarly for \ref{hom_basis_3} and the second type of $B_{J_s}$). In the event $(0, m)\in I$, then $\{y_1, \ldots, y_m, \{x_1, \ldots, x_n\} \}$ is a chain in $\M(n_1, n_2)$. We simply replace any mention of $\{x_1, \ldots, x_n, \{y_1, \ldots, y_m\} \}$ with $\{y_1, \ldots, y_m, \{x_1, \ldots, x_n\} \}$. The proof of this follows in a slightly modified but straightforward manner.


\subsection{Cohomology} \label{cohom_2}
The cohomology ring of $\M(n_1, n_2)$ can be described by a space of forests. We will be calculating cohomology with integer coefficients. We will still be working under the assumption that $I$ is not rectangular.

Again, there are distinguished elements of $\N^2$

\begin{defin}
Let $\Cp = \{ (n,m) \in I: (n+1, m), (n, m+1) \notin I\}$.

Let $\Dp = \{ (n, m) \in \C : \{(n-1, m+1), (n+1, m-1)\} \cap \N^2 \subset I \}$.
\end{defin}

\begin{defin}
An \emph{admissible forest} is a forest satisfying the following:
it has three types of vertices: rectangles, circles, and diamonds. Each circle and diamond contains exactly one element of $X \cup Y$. Each circle is connected to at most one rectangle and nothing else. Each diamond is connected to exactly one rectangle and nothing else. Each rectangle is connected to at least one circle. Each rectangle satisfies one of the following:
\begin{enumerate}
\item it contains $n$ elements from $X$ and $m$ elements from $Y$ for some $(n, m) \in \Cp$
\item it contains $n-1$ elements from $X$ and $m-1$ elements from $Y$ for some $(n, m) \in \Dp$
\item it contains $n-1$ elements from $X$ for $(n, 0) \in \Dp$
\item it contains $m-1$ elements from $Y$ for $(0, m) \in \Dp$
\end{enumerate}
In case 1, there are no diamonds attached to the rectangle. In case 2, either all circles attached to the rectangle contain elements of $Y$ and all diamonds attached to it contain elements from $X$ or all circles attached to the rectangle contain elements from $X$ and all diamonds attached to it contain elements from $Y$. In case 3, there are no diamonds attached to it, and all circles attached to it contain elements from $X$. In case 4, there are no diamonds attached to it, and all circles attached to it contain elements from $Y$. Finally, each element of $X \cup Y$ must be in exactly one vertex.

An \emph{orientation} of an admissible forest is:
\begin{itemize}
\item an orientation of each edge
\item an ordering of elements within each rectangle
\item an ordering of the set of rectangles and edges
\end{itemize}

For a rectangle, if it has no diamonds attached to it, we say its \emph{weight} is the number of elements it contains. Otherwise, its weight is one more than the number of elements it contains.
\end{defin}

Because of the assumption that $(2, 0), (1, 1), (0, 2) \in I$, all rectangles have weight at least $2$.

\begin{figure}[t]
\begin{minipage}{0.3 \linewidth}
\def\scale{0.3}
\begin{center}
\begin{tikzpicture}
\foreach \x/\y in {0/0, 0/1, 0/2, 0/3, 0/4, 0/5, 1/0, 1/1, 1/2, 1/3, 2/0, 2/1, 2/2, 2/3, 3/0, 3/1, 4/0, 5/0}
	{\node[draw,rectangle, minimum height = \scale cm, minimum width = \scale cm] at (\x*\scale, \y*\scale) {};}
\end{tikzpicture}
\end{center}
\end{minipage}
\begin{minipage}{0.6 \linewidth}
\begin{center}
\begin{tikzpicture}[every node/.style={scale=0.8}]
\node[draw, circle] (r0) at (0, 0.7) {$x_3$};
\node[draw, circle] (r1) at (1, 1.4) {$y_4$};
\node[draw, diamond] (d0) at (2,1.4) {$x_1$};
\node[draw, diamond] (d1) at (3, 1.4) {$x_9$};
\node[draw, rectangle] (s1) at (2,0) {$y_1$, $y_3$, $y_7$};
\node[draw, circle] (r2) at (4.5, 1.4) {$x_6$};
\node[draw, rectangle] (s2) at (4.5,0) {$x_2$, $x_7$, $y_2$, $y_8$, $y_9$};
\node[draw,rectangle] (s3) at (7, 0) {$x_4$, $x_5$, $x_8$, $y_5$};
\node[draw, circle](r3) at (7, 1.4) {$y_6$};
\foreach \dest/\source in {r1/s1, d0/s1, d1/s1, r2/s2, s2/s1, r3/s3, s3/s2}
	\path[-] (\dest) edge (\source);
\end{tikzpicture}
\end{center}
\end{minipage}
\caption{An example of an ideal, $I$, and an unoriented admissible forest for $I$.}
\end{figure}
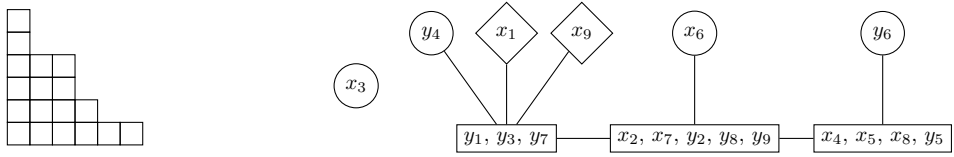

Again, to each admissible forest, we associate a chain in $\R^{d(n+m)}$ whose boundary lies in the complement of $\M$. Thus, the forest will represent a cocycle in $H^* \M(n, m)$. We associate the chain as follows:
\begin{itemize}
\item for each rectangle $A$ and each $i, j \in A$, $i = j$
\item if there is an edge from $A$ to $B$ and $i \in A, j \in B$, then $(i)_1 \leq (j)_1$ and $(i)_{\ell} = (j)_{\ell}$ for all $\ell > 1$ where $(i)_{\ell}$ is the $\ell^{th}$ coordinate of $i$
\item for each rectangle $A$ with diamonds $B$ attached to it, $\exists i \in B$ such that ${i}$ exhibits the behavior of an element in $A$.
\end{itemize}

The rest of the orientation data is used to coorient the chain. We coorient the chain by giving an explicit basis for the normal bundle. Suppose there exists an edge from vertex $A$ to vertex $B$. Suppose $i$ is the first element in vertex $A$ and $j$ is the first element in vertex $B$. Then this edge contributes:
\begin{align*}
\partial (j)_2 - \partial (i)_2, \ldots, \partial (j)_d - \partial (i)_d
\end{align*}
Suppose there exists a rectangle vertex with ordered elements $({i_1}, \ldots, {i_\ell})$. This rectangle vertex contributes:
\begin{align*}
\partial ({i_2})_1 - \partial ({i_1})_1, \ldots, ({i_2})_d - \partial ({i_1})_d,\ldots, \partial ({i_\ell})_d - \partial ({i_1})_d
\end{align*}

If this rectangle vertex has diamonds attached to it, then for the subspace where $j$ behaves like an element in the rectangle, add $\partial ({j})_d - \partial ({i_1})_d$ to the end of the rectangle's contribution.

We now give relations between forests:

\begin{lemma}
\label{cohom_relations_2}
For $d>1$, the cohomology classes given by admissible forests have the following relations:
\begin{enumerate}
\item Orientation Relations:
\begin{enumerate}
\item Changing the order of the orientation set produces the Koszul sign of the permutation
\item A permutation $\sigma \in S_n$ of elements inside a rectangle vertex produces a sign $(-1)^{|\sigma| d}$.
\item Changing the orientation of an edge produces the sign $(-1)^d$.
\end{enumerate}
\item 3-term relation:
\begin{center}
\begin{tikzpicture}
\node[draw, rectangle] (1A) at (0,0) {$A$};
\node[draw, rectangle] (1B) at (1,1) {$B$};
\node[draw, rectangle] (1C) at (2,0) {$C$};
\path[->, style={sloped}] (1A) edge node[above]{$1$} (1B);
\path[->, style={sloped}] (1B) edge node[above]{$2$} (1C);
\node[text width=0.5 cm] at (3, 0.5) {$+$};
\node[draw, rectangle] (2A) at (4, 0) {$A$};
\node[draw, rectangle] (2B) at (5, 1) {$B$};
\node[draw, rectangle] (2C) at (6, 0) {$C$};
\path[->, style={sloped}] (2B) edge node[above]{$1$} (2C);
\path[->] (2C) edge node[above]{$2$} (2A);
\node[text width=0.5 cm] at (7, 0.5) {$+$};
\node[draw, rectangle] (3A) at (8, 0) {$A$};
\node[draw, rectangle] (3B) at (9, 1) {$B$};
\node[draw, rectangle] (3C) at (10, 0) {$C$};
\path[->] (3C) edge node[above]{$1$} (3A);
\path[->, style={sloped}] (3A) edge node[above]{$2$} (3B);
\node[text width=0.5 cm] at (11, 0.5) {$=$};
\node[text width=0.5 cm] at (12, 0.5) {$0$};
\end{tikzpicture}
\end{center}

\item Switching which color are diamonds: If $(n+1, m+1) \in \Dp$, then
\begin{center}
\begin{tikzpicture}
\node[draw, rectangle] (r) at (1.5, 1) {$i_1, i_2, \ldots, i_{n+m}$};
\node[draw, circle, minimum size=1cm, scale = .7] (j1) at (-1, -1) {$j_1$};
\node[text width = .3 cm] at (0.6, -.25) {$...$};
\node[draw, circle, minimum size=1cm, scale = .7] (j5) at (1, -1) {$j_{r}$};
\node[draw, diamond, minimum size=1cm, scale = .7] (k1) at (2, -1) {$k_1$};
\node[text width = .3 cm] at (2.4, -.25) {$...$};
\node[draw, diamond, minimum size=1cm, scale = .7] (ks) at (4, -1) {$k_s$};
\foreach \label/\dest in {1/j1, {r}/j5, r+1/k1, r+s/ks}
	\path[->, style={sloped}] (r) edge node[above]{\small{$\label$}} (\dest);
\node[text width = .5cm] at (5, 0) {$+$};
\node[draw, rectangle] (r) at (8.5, 1) {$i_1, i_2 \ldots, i_{n+m}$};
\node[draw, diamond, minimum size=1cm, scale = .7] (j1) at (6, -1) {$j_1$};
\node[text width = .3 cm] at (7.6, -.25) {$...$};
\node[draw, diamond, minimum size=1cm, scale = .7] (j5) at (8, -1) {$j_{r}$};
\node[draw, circle, minimum size=1cm, scale = .7] (k1) at (9, -1) {$k_1$};
\node[text width = .3 cm] at (9.4, -.25) {$...$};
\node[draw, circle, minimum size=1cm, scale = .7] (ks) at (11, -1) {$k_s$};
\foreach \label/\dest in {1/j1, {r}/j5, r+1/k1, r+s/ks}
	\path[->, style={sloped}] (r) edge node[above]{\small{$\label$}} (\dest);
\node[text width = .5cm] at (12, 0) {$=$};
\node[text width = .5cm] at (13, 0) {$0$};
\end{tikzpicture}
\end{center}

where $\{i_1, \ldots, i_{n}, j_1, \ldots, j_r\} \subset X, \{ i_{n+1}, \ldots, i_{n+m}, k_1, \ldots, k_s\}\subset Y$.

\item Relations exchanging values in rectangles:
\begin{enumerate}
\item If $(n+1, m), (n, m+1) \in \Cp, then$
\begin{center}
\begin{tikzpicture}
\node[text width = 8cm] at (0,0) {$\sum\limits_{l=0}^r (-1)^{l(d-1)}$};
\node[draw, rectangle] (r) at (1, 1) {$i_1, i_2, \ldots, i_{n+m}, j_l$};
\node[draw, circle, minimum size=1cm, scale = .7] (j1) at (-2, -1) {$j_1$};
\node[draw, circle, minimum size=1cm, scale = .7] (j2) at (-0.5, -1) {$j_2$};
\node[text width=.3 cm] at (0.5, -.25) {$...$};
\node[draw, circle, minimum size=1cm, scale = .7] (j3) at (1, -1) {$j_{l-1}$};
\node[draw, circle, minimum size=1cm, scale = .7] (j4) at (2, -1) {$j_{l+1}$};
\node[text width = .3 cm] at (2.3, -.25) {$...$};
\node[draw, circle, minimum size=1cm, scale = .7] (j5) at (4, -1) {$j_{r}$};
\foreach \label/\dest in {1/j1, 2/j2, {}/j3, {}/j4, {\hspace{.45 cm} r-1}/j5}
	\path[->, style ={sloped}] (r) edge node[above]{\small{$\label$}} (\dest);
\node[text width = .5cm] at (5, -.25) {$=$};
\node[text width = .5cm] at (6, -.25) {$0$};
\end{tikzpicture}
\end{center}
where $\{i_1, \ldots, i_n\} \subset X, \{ i_{n+1}, \ldots, i_{n+m}\}\subset Y$, and $\{j_1, \ldots, j_r\} \subset X\cup Y$.

\item If $(n+1, m) \in \Cp, (n, m+1) \notin I$, then
\begin{center}
\begin{tikzpicture}
\node[text width = 8cm] at (0,0) {$\sum\limits_{l=0}^r (-1)^{l(d-1)}$};
\node[draw, rectangle] (r) at (1, 1) {$i_1, i_2, \ldots, i_{n+m}, j_l$};
\node[draw, circle, minimum size=1cm, scale = .7] (j1) at (-2, -1) {$j_1$};
\node[text width=.3 cm] at (-0.5, -.25) {$...$};
\node[draw, circle, minimum size=1cm, scale = .7] (j3) at (-0.5, -1) {$j_{l-1}$};
\node[draw, circle, minimum size=1cm, scale = .7] (j4) at (0.5, -1) {$j_{l+1}$};
\node[text width = .3 cm] at (1, -.25) {$...$};
\node[draw, circle, minimum size=1cm, scale = .7] (j5) at (1.5, -1) {$j_{r}$};
\node[draw, circle, minimum size=1cm, scale = .7] (k1) at (2.5, -1) {$k_1$};
\node[text width = .3 cm] at (2.5, -.25) {$...$};
\node[draw, circle, minimum size=1cm, scale = .7] (ks) at (4, -1) {$k_s$};
\foreach \label/\dest in {1/j1, {}/j3, {}/j4, {r-1}/j5, r/k1, r+s-1/ks}
	\path[->, style={sloped}] (r) edge node[above]{\small{$\label$}} (\dest);
\node[text width = .5cm] at (5, -.25) {$=$};
\node[text width = .5cm] at (6, -.25) {$0$};
\end{tikzpicture}
\end{center}
where $\{i_1, \ldots, i_n, j_1, \ldots, j_r\} \subset X, \{ i_{n+1}, \ldots, i_{n+m}, k_1, \ldots, k_s\}\subset Y$.

\item Similar relation if $(n, m+1) \in \Cp, (n+1, m) \notin I$

\item For any $(n+1,m+1) \in \Cp,$

\begin{center}
\begin{tikzpicture}
\node[text width = 8cm] at (0,0) {$\sum\limits_{i=0}^r\sum\limits_{j=0}^s(-1)^{(i+j+r)(d-1)}$};
\node[draw, rectangle](r) at (3.5,1){$i_1, \ldots, i_n, j_1, \ldots, j_m, k_i, \ell_j$};
\node[draw, circle, minimum size=1cm, scale = .7] (i1) at (-2, -1) {$k_1$};
\node[text width=.3 cm] at (0.8, -.25) {$...$};
\node[draw, circle, minimum size=1cm, scale = .7] (i2) at (0, -1) {$k_{i-1}$};
\node[draw, circle, minimum size=1cm, scale = .7] (i3) at (1, -1) {$k_{i+1}$};
\node[text width = .3 cm] at (2.5, -.25) {$...$};
\node[draw, circle, minimum size=1cm, scale = .7] (i4) at (2.5, -1) {$k_{r}$};
\foreach \label/\dest in {1/i1, {}/i2, {}/i3, {\hspace{.25 cm} r-1}/i4}
	\path[->,style={sloped}] (r) edge node[above]{\small{$\label$}} (\dest);
\node[draw, circle, minimum size=1cm, scale = .7] (j1) at (4.5, -1) {$\ell_1$};
\node[text width=.3 cm] at (4.7, -.25) {$...$};
\node[draw, circle, minimum size=1cm, scale = .7] (j2) at (6, -1) {$\ell_{j-1}$};
\node[draw, circle, minimum size=1cm, scale = .7] (j3) at (7, -1) {$\ell_{j+1}$};
\node[text width = .3 cm] at (6.4, -.25) {$...$};
\node[draw, circle, minimum size=1cm, scale = .7] (j4) at (9, -1) {$\ell_{s}$};
\foreach \label/\dest in {\hspace{.1 cm} r/j1, {}/j2, {}/j3, {\hspace{.75 cm} r+s-2}/j4}
	\path[->, style={sloped}] (r) edge node[above]{\small{$\label$}} (\dest);
\node[text width = .5cm] at (9.5, -.25) {$=$};
\node[text width = .5cm] at (10, -.25) {$0$};
\end{tikzpicture}
\end{center}
where $\{i_1, \ldots, i_n, k_1, \ldots, k_r\} \subset X, \{j_1, \ldots, j_m, \ell_1, \ldots, \ell_s \} \subset Y$

\item For any $(n+1, m+1) \in \Dp$,
\begin{center}
\begin{tikzpicture}
\node[text width = 8cm] at (0,0) {$\sum\limits_{l=0}^r (-1)^{l(d-1)}$};
\node[draw, rectangle] (r) at (1, 1) {$i_1, i_2, \ldots, i_{n+m-1}, j_l$};
\node[draw, circle, minimum size=1cm, scale = .7] (j1) at (-2, -1) {$j_1$};
\node[text width=.3 cm] at (-0.5, -.25) {$...$};
\node[draw, circle, minimum size=1cm, scale = .7] (j3) at (-0.5, -1) {$j_{l-1}$};
\node[draw, circle, minimum size=1cm, scale = .7] (j4) at (0.5, -1) {$j_{l+1}$};
\node[text width = .3 cm] at (1, -.25) {$...$};
\node[draw, circle, minimum size=1cm, scale = .7] (j5) at (1.5, -1) {$j_{r}$};
\node[draw, diamond, minimum size=1cm, scale = .7] (k1) at (2.5, -1) {$k_1$};
\node[text width = .3 cm] at (2.5, -.25) {$...$};
\node[draw, diamond, minimum size=1cm, scale = .7] (ks) at (4, -1) {$k_s$};
\foreach \label/\dest in {1/j1, {}/j3, {}/j4, {r-1}/j5, r/k1, r+s-1/ks}
	\path[->, style={sloped}] (r) edge node[above]{\small{$\label$}} (\dest);
\node[text width = .5cm] at (5, -.25) {$=$};
\node[text width = .5cm] at (6, -.25) {$0$};
\end{tikzpicture}
\end{center}
where $\{i_1, \ldots, i_{n-1}, j_1, \ldots, j_r\} \subset X, \{ i_{n}, \ldots, i_{n+m-1}, k_1, \ldots, k_s\}\subset Y$.
\end{enumerate}
\end{enumerate}

In relations 2-4, the rectangles may be attached to other rectangles.
\end{lemma}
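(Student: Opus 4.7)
My plan is to follow the same template as the proof of Lemma \ref{cohom_relations}, with adjustments for the presence of diamonds. The orientation relations (1a)--(1c) are proved exactly as before: (1a) and (1b) follow from the fact that the Koszul sign appears when one permutes the ordered orientation data used to specify the coorientation, while (1c) follows by observing that the two cells with $(i)_1 \leq (j)_1$ and $(i)_1 \geq (j)_1$ together form the boundary of the cell where $(i)_2 \leq (j)_2$ (after keeping all other coordinate equalities). The 3-term relation (2) is proved verbatim: after rewriting both right-hand side terms via (1c) so that both edges emanate from $B$, the cell for the left-hand side is visibly the union of the cells for the two right-hand side trees.

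For the rectangle-exchange relations (4a)--(4e), I would mimic relation 3 of Lemma \ref{cohom_relations}: form the sphere chain satisfying the sum-zero and unit-norm equations associated to the critical or $\Dp$ tuple at the rectangle, augmented by the element(s) being cycled (the $j_l$ in (4a)--(4c), the pair $k_i,\ell_j$ in (4d), and the $j_l$ together with the diamond structure in (4e)). Then delete tubular neighborhoods of intersections with subspaces not in $\M$ and read the relation off from the boundary. The hypotheses $(n+1,m)\in\Cp$, $(n,m+1)\notin I$, and so on in each subcase exactly dictate which subspaces are forbidden (and so contribute zero) and which lie in $\M$ (and so contribute the displayed terms). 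For (4d), the tuple $(n+1,m+1)\in\Cp$ gives the double sum because both diagonal directions into the rectangle are traversable. For (4e), the sphere chain is modified to include the diamond's contribution (the additional normal-bundle vector $\partial(j)_d - \partial(i_1)_d$), but the boundary computation is otherwise identical to (4b)/(4c).

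The main obstacle is relation (3), which has no analogue in the decreasing case. The two trees differ only in which color is drawn as circles and which as diamonds, so they correspond to chains in $\R^{(n_1+n_2)d}$ with closely related supports. I would prove the relation by exhibiting a common bounding chain: extend the rectangle to allow one additional coordinate that can be either an $X$ or a $Y$ element, and take the sphere attached to $(n+1,m+1)$ (which lies in $\M$ exactly because $(n+1,m+1)\in\Dp$ forces $(n+1,m),(n,m+1)\in I$ while $(n+2,m),(n,m+2)\in I$ by the $\Dp$ diagonal condition, so no higher collisions appear in the boundary). The boundary of this chain, after removing tubular neighborhoods of the forbidden loci, has exactly two surviving pieces -- one in which the extra coordinate collapses into the rectangle as an $X$-diamond, and one in which it does so as a $Y$-diamond -- and these pieces give the two trees of relation (3), with opposite coorientations coming from the permutation of normal-bundle basis vectors between the two configurations.

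Finally, the parenthetical remark that rectangles in (2)--(4) may be attached to other rectangles is handled by noting that additional edges only add further coordinate equalities and normal-bundle vectors, which pass through all of the boundary arguments above without interference.
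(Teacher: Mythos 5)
Your handling of the orientation relations and the 3-term relation follows the paper's argument and is fine. The gap is in your method for relations (3) and (4a)--(4e): you propose to prove them by forming the sphere given by the sum-zero and unit-norm equations and excising tubular neighborhoods of forbidden loci, and you attribute this to relation 3 of Lemma \ref{cohom_relations}. That device is in fact the one used for the \emph{homology} relations (Lemmas \ref{hom_relations} and \ref{hom_relations_2}); relation 3 of Lemma \ref{cohom_relations} is proved instead by a coboundary computation, namely by taking the boundary of the cell of a tree with a smaller rectangle. A relation among the forest cocycles must be exhibited as a coboundary: one needs a cooriented chain of codimension one less than the forests in the relation whose boundary, away from the complement of $\M$, is exactly the displayed signed sum. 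The sphere attached to a critical or $\Dp$ tuple is a compact cycle of dimension $(w-1)d-1$, i.e.\ of complementary degree; removing tubular neighborhoods from it produces relations among the homology generators, not among the cooriented cells, so the degrees simply do not match and the argument as written does not prove (4a)--(4e).

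The correct bounding chains are cells of trees with the rectangle one element smaller. For (4a)--(4c) and (4e) it is the tree whose rectangle contains only $i_1,\ldots,i_{n+m}$ (resp.\ $i_1,\ldots,i_{n+m-1}$) with all exchanged elements attached as circles (the diamonds $k_1,\ldots,k_s$ retained in (4e)); the hypotheses $(n+1,m)\in\Cp$, $(n,m+1)\notin I$, $(n+1,m+1)\in\Dp$, etc.\ then decide, exactly as you anticipated, which boundary strata are admissible forests and which lie in the complement. For (3) no ``extra coordinate'' and no sphere are needed: take the cell of the tree on the same vertex set in which all of $j_1,\ldots,j_r,k_1,\ldots,k_s$ are circles; since $(n+1,m),(n,m+1)\in I$, the boundary strata where a $Y$-point (resp.\ an $X$-point) enters the rectangle lie in $\M$ and assemble into the first (resp.\ second) displayed tree, giving the relation. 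For (4d) note that the bounding cochain is itself a signed sum over $i$ of cells whose rectangles already contain $k_i$, which a single sphere or single cell cannot produce; this is why the double sum with the sign $(-1)^{(i+j+r)(d-1)}$ appears. With these replacements your outline, including the remark about additional attached rectangles, goes through.
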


\begin{proof}
Relations 1(a) and 1(b) come from changing the coorientation. For 1(c), the inequality changes from $x_i^1 \leq x_j^1$ to $x_i^1 \geq x_j^1$. To see these are homologous (up to a sign), consider the cell given by the the inequality $x_i^2 < x_j^2$. Its boundary is a sum of the two cells in question.

Relation 2 is equivalent to
\begin{center}
\begin{tikzpicture}
\node[draw, rectangle] (1A) at (0,0) {A};
\node[draw, rectangle] (1B) at (1,1) {$B$};
\node[draw, rectangle] (1C) at (2,0) {$C$};
\path[->, style={sloped}] (1B) edge node[above]{$1$} (1A);
\path[->, style={sloped}] (1B) edge node[above]{$2$} (1C);
\node[text width=0.5 cm] at (3, 0.5) {$=$};
\node[draw, rectangle] (2A) at (4, 0) {$A$};
\node[draw, rectangle] (2B) at (5, 1) {$B$};
\node[draw, rectangle] (2C) at (6, 0) {$C$};
\path[->, style={sloped}] (2B) edge node[above]{$1$} (2A);
\path[->] (2A) edge node[above]{$2$} (2C);
\node[text width=0.5 cm] at (7, 0.5) {$+$};
\node[draw, rectangle] (3A) at (8, 0) {$A$};
\node[draw, rectangle] (3B) at (9, 1) {$B$};
\node[draw, rectangle] (3C) at (10, 0) {$C$};
\path[->] (3C) edge node[above]{$1$} (3A);
\path[->, style={sloped}] (3B) edge node[above]{$2$} (3C);
\end{tikzpicture}
\end{center}
The cell corresponding to the left hand side is the union of the cells corresponding to the trees on the right hand size.

Relation 3 comes from looking at the boundary of the cell corresponding to:
\begin{center}
\begin{tikzpicture}
\node[draw, rectangle] (r) at (1.5, 1) {$i_1, i_2, \ldots, i_{n+m}$};
\node[draw, circle, minimum size=1cm, scale = .7] (j1) at (-1, -1) {$j_1$};
\node[text width = .3 cm] at (0.6, -.25) {$...$};
\node[draw, circle, minimum size=1cm, scale = .7] (j5) at (1, -1) {$j_{r}$};
\node[draw, circle, minimum size=1cm, scale = .7] (k1) at (2, -1) {$k_1$};
\node[text width = .3 cm] at (2.4, -.25) {$...$};
\node[draw, circle, minimum size=1cm, scale = .7] (ks) at (4, -1) {$k_s$};
\foreach \label/\dest in {1/j1, {r}/j5, r+1/k1, r+s/ks}
	\path[->, style={sloped}] (r) edge node[above]{\small{$\label$}} (\dest);
\end{tikzpicture}
\end{center}

Relation 4(a) comes from looking at the boundary of the cell corresponding to:
\begin{center}
\begin{tikzpicture}
\node[draw, rectangle] (r) at (1, 1) {$i_1, i_2, \ldots, i_{n+m}$};
\node[draw, circle, minimum size=1cm, scale = .7] (j1) at (-0.5, -1) {$j_1$};
\node[text width=.3 cm] at (1, -.25) {$...$};
\node[draw, circle, minimum size=1cm, scale = .7] (j5) at (2.5, -1) {$j_{r}$};
\foreach \label/\dest in {1/j1, {\hspace{.45 cm} r}/j5}
	\path[->, style ={sloped}] (r) edge node[above]{\small{$\label$}} (\dest);
\end{tikzpicture}
\end{center}

Relations 4(b) and 4(c) comes from looking at similar cells.

Relation 4(d) comes from looking at the boundary of the cell corresponding to:

\begin{center}
\begin{tikzpicture}
\node[text width = 8cm] at (1.5,0) {$\sum\limits_{i=0}^r (-1)^{i(d-1)}$};
\node[draw, rectangle](r) at (3.5,1){$i_1, \ldots, i_n, j_1, \ldots, j_m, k_i$};
\node[draw, circle, minimum size=1cm, scale = .7] (i1) at (-1, -1) {$k_1$};
\node[text width=.3 cm] at (1.3, -.25) {$...$};
\node[draw, circle, minimum size=1cm, scale = .7] (i2) at (1, -1) {$k_{i-1}$};
\node[draw, circle, minimum size=1cm, scale = .7] (i3) at (2, -1) {$k_{i+1}$};
\node[text width = .3 cm] at (3, -.25) {$...$};
\node[draw, circle, minimum size=1cm, scale = .7] (i4) at (3.5, -1) {$k_{r}$};
\foreach \label/\dest in {1/i1, {}/i2, {}/i3, {\hspace{.25 cm} r-1}/i4}
	\path[->,style={sloped}] (r) edge node[above]{\small{$\label$}} (\dest);
\node[draw, circle, minimum size=1cm, scale = .7] (j1) at (5.5, -1) {$\ell_1$};
\node[text width = .3 cm] at (5.4, -.25) {$...$};
\node[draw, circle, minimum size=1cm, scale = .7] (j4) at (8, -1) {$\ell_{s}$};
\foreach \label/\dest in {\hspace{.1 cm} r/j1, {\hspace{.75 cm} r+s-1}/j4}
	\path[->, style={sloped}] (r) edge node[above]{\small{$\label$}} (\dest);
\end{tikzpicture}
\end{center}

Relation 4(e) comes from looking at the boundary of the cell corresponding to:

\begin{center}
\begin{tikzpicture}
\node[draw, rectangle] (r) at (1, 1) {$i_1, i_2, \ldots, i_{n+m-1}$};
\node[draw, circle, minimum size=1cm, scale = .7] (j1) at (-1.5, -1) {$j_1$};
\node[text width = .3 cm] at (0, -.25) {$...$};
\node[draw, circle, minimum size=1cm, scale = .7] (j5) at (0.5, -1) {$j_{r}$};
\node[draw, diamond, minimum size=1cm, scale = .7] (k1) at (1.5, -1) {$k_1$};
\node[text width = .3 cm] at (2, -.25) {$...$};
\node[draw, diamond, minimum size=1cm, scale = .7] (ks) at (3.5, -1) {$k_s$};
\foreach \label/\dest in {1/j1, {r}/j5, r+1/k1, r+s/ks}
	\path[->, style={sloped}] (r) edge node[above]{\small{$\label$}} (\dest);
\end{tikzpicture}
\end{center}

\end{proof}

The only relation that does not work when $d=1$ is relation 1(c). There is a substitute for 1(c) in the case that $d=1$:

\begin{center}
\begin{tikzpicture}
\node[draw, rectangle] (1A) at (0,0) {$A$};
\node[draw, circle] (1B) at (0,1.5) {$B$};
\path[->] (1A) edge (1B);
\node[text width=0.5 cm] at (1, .75) {$+$};
\node[draw, rectangle] (2A) at (2, 0) {$A$};
\node[draw, circle] (2B) at (2, 1.5) {$B$};
\path[->] (2B) edge (2A);
\node[text width=0.5 cm] at (3, 0.75) {$=$};
\node[draw, rectangle] (3A) at (4, 0) {$A$};
\node[draw, circle] (3B) at (4, 1.5) {$B$};
\end{tikzpicture}
\end{center}

Here, the circle may be replaced with a rectangle. One may also replace the circles on the left hand side of the equation with diamonds. In the event that this causes a rectangle that should have diamonds attached to it to no longer have any diamonds attached to it, the right hand side is zero.

Using these relations, we produce bases for cohomology similar to that from section \ref{cohom}

\begin{defin}
Define a \emph{linear $I$-tree} to be an admissible tree of the following form:

\begin{center}
\begin{tikzpicture}
\foreach \label/\x/\numc/\numd in {1/2/3/0,2/4/2/2,3/6/2/0, n/10/3/0}
	{
	\node[draw, rectangle] (r\label) at (\x,0){$A_{\label}$};
	\foreach \m in {1,...,\numc}
		{
		\node[draw, circle] (s\label\m) at ({\x-1+2*(\m-0.5)/(\numc+\numd)}, 1.5) {} ;
		\path[->] (r\label) edge (s\label\m);
		}
	\ifnum \numd > 0
	{
	\foreach \m in {1,...,\numd}
		{
		\node[draw, diamond] (s\label\m) at ({\x-1+2*(\m+\numc-0.5)/(\numc+\numd)}, 1.5) {} ;
		\path[->] (r\label) edge (s\label\m);
		}		
	}
	\fi
	}
\node[text width = 1cm](r4) at (8,0){$\hspace{.4cm}...$};
\path[->] (r1) edge (r2);
\path[->] (r2) edge (r3);
\path[->] (r3) edge (r4);
\path[->] (r4) edge (rn);
\end{tikzpicture}
\end{center}

such that
\begin{itemize}
\item The elements in $A_i$ appear elements from $X$ first, then $Y$, all in their linear order
\item The circles and diamonds attached to each rectangle are ordered similarly
\item All elements in diamonds are from $Y$
\item The minimal $X$ element in the tree is in $B_1$
\item For each $i$, let $\ell^1_i$ be the maximum element from $X$ in $B_i$; let $\ell^2_i$ be the maximum element from $Y$ in $B_i$
\begin{itemize}
\item If $A_i$ contains $n$ elements from $X$ and $m$ elements from $Y$ for some $(n, m) \in \Cp$:
\begin{itemize}
\item If $(n-1, m+2)\in I$, then $\ell^1_i \notin A_i$ 
\item If $(n+1, m-1) \in I$, then $\ell^2_i \notin A_i$ 
\item If $(n-1, m+2), (n+1, m-1) \notin I$, then $\ell^1_i \notin A_i$ or $\ell^2_i \notin A_i$ 
\end{itemize}
\item If $A_i$ contains $n$ elements from $X$ and $m$ elements from $Y$ for some $(n+1, m+1) \in \Dp$:
\begin{itemize}
\item If $n=0$, then $\ell^1_i \notin A_i$ 
\item If $m=0$, then $\ell^2_i \notin A_i$ 
\item If $n, m \neq 0$, then $\ell^1_i \notin A_i$ and $\ell^2_i \notin A_i$ 
\end{itemize}
\end{itemize}
\end{itemize}
where $B_i$ is the set of elements in $A_i$, circles attached to $A_i$, and diamonds attached to $A_i$.
\end{defin}

Using the relations from Lemma \ref{cohom_relations_2}, any admissible forest can be written as a forest whose components are linear $I$-trees and singleton circles. We will show that this is a basis for $H^* \M(n_1, n_2)$. For $d>1$, this basis will be dual to the generating set for homology from Theorem \ref{hom_basis_2}.

\begin{defin}
Let $\mathcal{H}$ be the set of generators given in Theorem \ref{hom_basis_2}. Let $\mathcal{C}$ be the set of cohomology classes represented by products of linear $I$-trees and singleton circles. Define $f: \mathcal{H} \to \mathcal{C}$ as follows:

Let $A \in \mathcal{H}$. Let $f(A)$ be the forest satisfying the following:
\begin{itemize}
\item For each $x_i$ factor in $A$, $f(A)$ has a singleton circle containing $x_i$
\item For each $y_i$ factor in $A$, $f(A)$ has a singleton circle containing $y_i$
\item Each other factor in $A$ has a corresponding linear $I$-tree as follows:
Suppose the factor is given by $[ \ldots [ [B_1, B_2], B_3] \ldots B_{\ell}]$, then each $B_i$ has a corresponding rectangle vertex, $A_i$. These rectangle vertices form a path from $A_1$ to $A_\ell$. For each $B_i$, the corresponding rectangle vertex has the following form:
\begin{itemize}
\item if $B_i = [ \ldots [ [ \ldots [\{x_{i_1}, \ldots, x_{i_n}, y_{j_1}, \ldots, y_{j_m} \}, x_{i'_1}] \ldots x_{i'_{a}} ], y_{j'_1}] \ldots y_{j'_b}]$ where $(n, m) \in \C$:

\begin{itemize}
\item if $(n-1, m)\in \Cp$ and $i_n > i'_a$, then $A_i$ contains $x_{i_1}, \ldots, x_{i_{n-1}}, y_{j_1}, \ldots, y_{j_m}$, all other elements in $B_i$ correspond to circles attached to $A_i$
\item if the previous condition does not hold and $(n, m-1) \in \Cp$, then $A_i$ contains $x_{i_1}, \ldots, x_{i_n}, y_{j_1}, \ldots, y_{j_{m-1}}$, all other elements in $B_i$ correspond to circles attached to $A_i$
\item if $(n, m) \in \Dp$, then $A_i$ contains $x_{i_1}, \ldots, x_{i_{n-1}}, y_{j_1}, \ldots, y_{j_{m-1}}$, all other $x_j$ in $B_i$ correspond to circles attached to $A_i$, all other $y_j$ in $B_i$ correspond to diamonds attached to $A_i$
\end{itemize}

\item if $B_i = [ \ldots [ [ \ldots [\{x_{i_1}, \ldots, x_{i_n}, \{y_{j_1}, \ldots, y_{j_m} \} \}, x_{i'_1}] \ldots x_{i'_{a}} ], y_{j'_1}] \ldots y_{j'_b}]$ for $(n-1, m-1) \in \D$, then $A_i$ contains $x_{i_1}, \ldots, x_{i_{n-1}}, y_{j_1}, \ldots, y_{j_{m-1}}$, all other elements in $B_i$ correspond to circles attached to $A_i$.
\end{itemize}
\end{itemize}
\end{defin}

It is again a (somewhat tedious) exercise to check $f$ is a bijection.

Order $\mathcal{H}$ such that if $A$ has more rectangles than $B$, then $A$ comes after $B$. Order $\mathcal{C}$ according to the ordering of corresponding elements in $\mathcal{H}$.

\begin{theorem}
\label{diag_proof_2}
With this ordering, the intersection pairing matrix is diagonal with $\pm 1$ on the diagonal.
\end{theorem}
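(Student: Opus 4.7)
The plan is to follow the template established by Theorem \ref{diag_proof}, first verifying that the diagonal entries equal $\pm 1$ and then showing off-diagonal entries vanish. The new feature compared to the decreasing case is the presence of diamond vertices, which encode the ``outer sphere'' in the product-of-spheres generators $\{x_1,\ldots,x_{\ell_1+1},\{y_1,\ldots,y_{\ell_2+1}\}\}$ for $(\ell_1,\ell_2)\in\D$; I will treat these in parallel with the rectangle/circle vertices already handled, together with the extra normal vector $\partial(j)_d-\partial(i_1)_d$ that the coorientation convention assigns to a rectangle with diamonds.

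For the diagonal entries, I would verify $\pm 1$ by running through each type of generator in turn. The pairing of $x_1\cdot \ldots \cdot x_{n_1}\cdot y_1\cdot\ldots\cdot y_{n_2}\in H_0\M(n_1,n_2)$ with the product of all singleton circles is immediate. For a single $\{x_{i_1},\ldots,x_{i_{\ell_1}},y_{j_1},\ldots,y_{j_{\ell_2}}\}$ with $(\ell_1,\ell_2)\in\C$, the equation-solving argument used in Theorem \ref{diag_proof} goes through verbatim, yielding one transverse point of intersection with $f$ of that generator. For the new generator $\{x_1,\ldots,x_{\ell_1+1},\{y_1,\ldots,y_{\ell_2+1}\}\}$ with $(\ell_1,\ell_2)\in\D$, whose image under $f$ is a single rectangle with circles attached and diamonds attached, the product-of-spheres parameterization and the diagonal/ordering cell represented by the tree intersect in a two-scale configuration: the outer sphere $c+\sum x_i=0$, $|c|^2+\sum|x_i|^2=1$ meets the cell where the $x_i$ coincide along the last $d-1$ coordinates and sit in a prescribed order in the first, in one point, and the inner sphere $\sum(y_j-c)=0$, $\sum|y_j-c|^2=\epsilon$ then meets the diamond-attached condition (one $y_j$ behaves like an element of the rectangle while the others obey the ordering from attached edges) in one further transverse point. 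For the iterated-bracket factors $[\ldots[[B_1,B_2],B_3]\ldots B_s]$, induction on $s$ works exactly as in Theorem \ref{diag_proof}, because the linear $I$-tree arising from $f$ has rectangle vertices whose contents are rigidly determined by each $B_i$. Products of such factors correspond to disjoint components of the forest with no coordinate constraints across components, so transversality is preserved.

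For the off-diagonal entries, I would adapt the reduction in Theorem \ref{diag_proof} more or less verbatim: if two coordinates in the same tree of $F$ lie in different factors of $[\alpha]$, then a perturbation of the representative of $[\alpha]$ disjoins it from the chain represented by $F$, so the intersection vanishes. This reduces to the case where each tree of $F$ receives indices from exactly one factor $[\ldots[B_1,B_2]\ldots B_s]$. Dimension considerations then force the associated subforest $T$ to be a tree with exactly $s$ rectangles, each of weight matching the corresponding $B_i$. The remaining task is to show that the rules in the definition of a linear $I$-tree (the placement of the maximal $X$- and $Y$-indices in $B_i$, together with the distinction between $A_i$ being of $\Cp$-type or $\Dp$-type and the choice of which color becomes diamonds) admit a \emph{unique} admissible assignment given the form of $B_i$; any other assignment admits a perturbation of $[\alpha]$ disjoining the two chains. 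Hence the nonzero case forces $T=T'$, where $T'=f([\ldots[B_1,B_2]\ldots B_s])$.

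The main obstacle I anticipate is the diamond case of the diagonal computation: checking transversality and counting the intersection point for a rectangle with diamonds requires unwinding the coorientation convention for the extra $\partial(j)_d-\partial(i_1)_d$ direction and matching it against the two-scale product-of-spheres geometry, so this part will be the most delicate sign/dimension bookkeeping. A secondary technical point is the off-diagonal case analysis: for a given $B_i$ one must rule out all admissible trees $T$ other than $T'$, which requires chasing through the several subcases in the definition of $f$ (in particular distinguishing when the factor uses a $\Cp$ rectangle with $(n-1,m+2)\in I$, with $(n+1,m-1)\in I$, or with neither, versus when it uses a $\Dp$ rectangle); this is routine but must be written out carefully so that no ambiguity remains.
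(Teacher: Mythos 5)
Your overall strategy is the intended one --- the paper proves Theorem \ref{diag_proof_2} by exactly the same two-step argument as Theorem \ref{diag_proof} (exhibit a single transverse intersection point for each generator paired against its image under $f$, then kill the off-diagonal entries by perturbations, degree counting, and uniqueness of the admissible linear $I$-tree) --- but your execution of the genuinely new part rests on a misreading of the bijection $f$. You assert that the diamonds encode the outer sphere of the product-of-spheres generators and that $f$ sends $\{x_1,\ldots,x_{\ell_1+1},\{y_1,\ldots,y_{\ell_2+1}\}\}$, $(\ell_1,\ell_2)\in\D$, to a rectangle with circles \emph{and} diamonds attached. In the paper's definition of $f$ the opposite happens: for a factor of the second type, with $(n-1,m-1)\in\D$, the rectangle contains $x_{i_1},\ldots,x_{i_{n-1}},y_{j_1},\ldots,y_{j_{m-1}}$ and \emph{all} remaining elements become circles (note that $(n-1,m-1)\in\D$ forces $(n,m-1),(n-1,m)\notin I$, so $(n-1,m-1)\in\Cp$ and this is an ordinary case-1 rectangle with no diamonds), whereas diamond-decorated rectangles occur only in the image of factors of the \emph{first} type, i.e.\ sphere classes $\{x_{i_1},\ldots,x_{i_n},y_{j_1},\ldots,y_{j_m}\}$ (possibly bracketed with singletons) whose critical pair $(n,m)$ lies in $\Dp$.

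Because of this, the computation you single out as the delicate diagonal case pairs two objects that are not dual under $f$ at all --- that pairing is an off-diagonal entry (and typically vanishes for degree reasons alone), not the $\pm 1$ you need --- and the two cases that are actually new in Theorem \ref{diag_proof_2} are left unaddressed: (i) the pairing of a diamond tree with its dual $\Dp$-sphere class, where the chain is a union of pieces indexed by which diamond element coincides with the rectangle cluster, and one must check that exactly one such piece meets the sphere transversally in one point, with the extra normal vector $\partial(j)_d-\partial(i_1)_d$ accounted for; and (ii) the pairing of the circles-only tree built on the $(n-1,m-1)$ rectangle with the product-of-spheres generator of Theorem \ref{hom_basis_2}, which is where the two-scale (small inner sphere of radius $\epsilon$) geometry really enters. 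Your off-diagonal outline is consistent with the paper, but the diagonal analysis must be redone with the correct duality before the proof is complete.
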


\begin{cor}
$\mathcal{H}$ and $\mathcal{C}$ are bases for $H_* \M(n_1, n_2)$ and $H^* \M(n_1, n_2)$, respectively.
\end{cor}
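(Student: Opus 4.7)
The plan is to mirror the proof of Theorem \ref{diag_proof}, with the additional complications coming from the diamond vertices and the $\D$-type building blocks $\{x_1,\ldots,x_{n+1},\{y_1,\ldots,y_{m+1}\}\}$. As before, I would prove the two halves of the claim separately: first that the diagonal entries are $\pm 1$, and second that every off-diagonal entry is $0$. Throughout, I would work factor by factor on elements of $\mathcal{H}$, using the fact that different Lie-bracket factors of a class in $\mathcal{H}$ correspond under $f$ to different trees in $\mathcal{C}$, and that indices in distinct trees of a forest impose no constraints on each other, so one can perturb representatives independently.

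For the diagonal, the singleton cases $x_i, y_j \in H_0 \M(\vec{n})$ are immediate. Next, for a single $\C$-type class $\{x_{i_1},\ldots,x_{i_n},y_{j_1},\ldots,y_{j_m}\}$ one solves the sphere equations against the chain $f(\cdot)$ exactly as in the proof of Theorem \ref{diag_proof}, obtaining a single transverse intersection point. For a $\D$-type class $\{x_{i_1},\ldots,x_{i_{n+1}},\{y_{j_1},\ldots,y_{j_{m+1}}\}\}$, I would solve the nested system: the outer sphere conditions $\sum x_i + c = 0$, $\sum |x_i|^2 + |c|^2 = 1$ together with the linearity/equality conditions from the corresponding linear $I$-tree (circles along the first axis, diamonds equal to a rectangle element in the last coordinate), and then the inner sphere conditions on the $y_j$'s. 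Because the diamond constraint fixes the $d$-th coordinate of the outer sphere but leaves the remaining $d-1$ coordinates free, and the outer sphere equations reduce these to a single point, the inner system has a unique transverse solution as well. After adjoining Lie-bracket operations by singletons (both $x$'s and $y$'s, possibly attached to rectangle or to diamond-bearing rectangles), the argument in Theorem \ref{diag_proof} transfers verbatim: each such Lie bracket imposes an inequality in one coordinate and equalities in the others, which combine with the sphere conditions to keep the intersection transverse and of cardinality one. Products of such factors split into independent subproblems since the trees of $f(\alpha)$ use disjoint index sets.

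For the off-diagonal entries, let $\alpha\in\mathcal{H}$ and $F\in\mathcal{C}$ with $F\neq f(\alpha)$. If some tree of $F$ contains indices coming from two different Lie-bracket factors of $\alpha$, I can scale one factor very small and place it far from the others, yielding a representative of $\alpha$ that misses the chain dual to $F$. This reduces to the case in which each tree of $F$ uses indices from exactly one factor $[\ldots[[B_1,B_2],B_3]\ldots B_\ell]$ of $\alpha$. For such a factor, let $T$ be the associated subforest of $F$ and let $T'=f([\ldots[[B_1,B_2],B_3]\ldots B_\ell])$. If a rectangle of $T$ contains indices spanning two distinct curly brackets (or none), there is an orientation choice and a perturbation pushing $\alpha$ off the chain, so every rectangle of $T$ must sit inside exactly one curly bracket of some $B_i$. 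Dimension counting, together with the restriction built into the definition of a linear $I$-tree (namely that a rectangle of weight $w$ contains $w-1$ or $w-2$ of the available indices and that certain maximal indices must lie outside the rectangle), then forces $T$ to have the same number of rectangles as $T'$ and those rectangles to match the curly brackets of the $B_i$'s; otherwise the pairing vanishes for degree reasons. A final check — identical to the one at the end of the proof of Theorem \ref{diag_proof} but now also tracking which color sits in the diamonds in the $\D$-type case — pins down the order of rectangles along $T$, the assignment of circles (and diamonds) to each rectangle, and which element is missing from each rectangle; in other words, it forces $T=T'$.

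The main obstacle will be the $\D$-case, where a rectangle has diamonds attached and there are genuinely two different linear $I$-trees realizing the same $B_i$ depending on whether the $X$-coordinates or $Y$-coordinates become diamonds (this is exactly relation 3 of Lemma \ref{cohom_relations_2}). I would deal with this by invoking the convention chosen in the definition of $f$ (diamonds always contain $Y$-elements) and checking that the rigidity conditions appended to the definition of linear $I$-tree in the $\D$-case — requiring both maxima $\ell^1_i$ and $\ell^2_i$ to lie outside the rectangle — rule out the swapped variant, so that the unique linear $I$-tree matching the combinatorics of $B_i$ is precisely $f$'s choice. Once this bookkeeping is in place, the remainder of the argument is a direct transcription of Theorem \ref{diag_proof}.
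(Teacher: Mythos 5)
Your proposal is correct and takes essentially the paper's route: the corollary rests on the diagonality (with entries $\pm 1$) of the intersection pairing between $\mathcal{H}$ and $\mathcal{C}$, i.e.\ Theorem \ref{diag_proof_2}, whose proof the paper itself describes as essentially identical to that of Theorem \ref{diag_proof}, and your factor-by-factor transversality argument together with the off-diagonal vanishing via perturbation, rectangle counting, and degree considerations is exactly that argument, with the diamond bookkeeping handled by relation 3 of Lemma \ref{cohom_relations_2} and the ``diamonds contain $Y$-elements'' convention just as intended. One minor slip that does not affect the argument: under $f$ the diamond-bearing rectangles are dual to the $\C$-type classes whose critical tuple lies in $\Dp$, whereas the $\D$-type product-of-spheres classes $\{x_1,\ldots,x_{n+1},\{y_1,\ldots,y_{m+1}\}\}$ correspond to rectangles (for an element of $\Cp$) with only circles attached, so the nested-sphere computation you describe should be paired with a diamond-free tree, and the diamond discussion belongs to the $\Dp$ case.
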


The proof of Theorem \ref{diag_proof_2} is essentially the same as that of Theorem \ref{diag_proof}.


\subsubsection{Multiplicative Structure}

The addition of diamonds causes us to have additional rules for multiplication not in the previous section.

\begin{defin}
Let $T_1, T_2 \in H^* \M(n_1, n_2)$ be two admissible forests. Suppose for all rectangles $A$ in $T_1$ and $B$ in $T_2$, $A \cap B = \emptyset$. Let $T_1 \cup T_2$ be the tree defined as follows:
\begin{itemize}
\item if $i, j$ are in a common rectangle in $T_1$ or $T_2$, then $i, j$ are in a common rectangle in $T_1 \cup T_2$
\item if $i$ is in a circle in both $T_1$ and $T_2$, then $i$ is in a circle in $T_1 \cup T_2$
\item if $i$ is in a diamond in both $T_1$ and $T_2$, then $i$ is in a diamond in $T_1 \cup T_2$
\item if $i$ is in a diamond in $T_1$ and is in a circle attached to nothing in $T_2$, then $i$ is in a diamond in $T_1 \cup T_2$ (likewise switching $T_1$ and $T_2$)
\item if $i$ is in a diamond in $T_1$ and is in a circle attached to a rectangle in $T_2$, then $i$ is in a star in $T_1 \cup T_2$
\item if $i \in A, j\in B$ in $T_k$ and there exists an edge from $A$ to $B$ in $T_k$, then there exists an edge from the vertex containing $i$ to the vertex containing $j$ in $T_1 \cup T_2$
\end{itemize}
\end{defin}

\begin{theorem}
\label{mult_2}
Let $T_1, T_2 \in H^* \M (n_1, n_2)$ be two admissible forests. The product of $T_1$ and $T_2$, $T_1 \cdot T_2$, is given as follows:

\begin{enumerate}
\item If there exists rectangles $A$ in $T_1$ and $B$ in $T_2$ such that $A \cap B \neq \emptyset$, then $T_1 \cdot T_2 = 0$.
\item If there exists two indices that are in a common tree in both $T_1$ and $T_2$, then $T_1 \cdot T_2 = 0$
\item If $T_1 \cup T_2$ has a cycle, then $T_1 \cdot T_2 = 0$.
\item If $T_1 \cup T_2$ has a rectangle with no circles attached to it, then $T_1 \cdot T_2 = 0$
\item If $T_1 \cup T_2$ has a rectangle that should have diamonds attached to it but doesn't, then $T_1 \cdot T_2 = 0$.
\item If $T_1 \cup T_2$ is an admissible forest, then $T_1 \cdot T_2 = T_1 \cup T_2$ with orientation set given by concatenation.
\item If $T_1 \cup T_2$ satisfies none of the above, then use the following relations to make $T_1 \cup T_2$ admissible
\begin{enumerate}
\vspace{0.4cm}
\item 
\begin{center}
\begin{tikzpicture}[baseline=0.4 cm]
\node[draw, rectangle] (1A) at (0,0) {$A$};
\node[draw, circle] (1B) at (1,1) {};
\node[draw, rectangle] (1C) at (2,0) {$B$};
\path[->] (1B) edge node[above]{$2$} (1C);
\path[->] (1B) edge node[above]{$1$} (1A);
\node[text width=0.5 cm] at (3, 0.5) {$=$};
\node[draw, rectangle] (2A) at (4, 0) {$A$};
\node[draw, circle] (2B) at (5, 1) {};
\node[draw, rectangle] (2C) at (6, 0) {$B$};
\path[->] (2B) edge node[above]{$1$} (2A);
\path[->] (2A) edge node[below]{$2$}(2C);
\node[text width=0.5 cm] at (7, 0.5) {$+$};
\node[draw, rectangle] (3A) at (8, 0) {$A$};
\node[draw, circle] (3B) at (9, 1) {};
\node[draw, rectangle] (3C) at (10, 0) {$B$};
\path[->] (3B) edge node[above]{$2$} (3C);
\path[->] (3C) edge node[below]{$1$} (3A);
\end{tikzpicture}
\end{center}
\vspace{0.4cm}
\item 
\begin{center}
\begin{tikzpicture}[baseline=0.4 cm]
\node[draw, rectangle] (1A) at (0,0) {$A$};
\node[draw, diamond] (1B) at (1,1) {};
\node[draw, rectangle] (1C) at (2,0) {$B$};
\path[->] (1B) edge node[above]{$2$} (1C);
\path[->] (1B) edge node[above]{$1$} (1A);
\node[text width=0.5 cm] at (3, 0.5) {$=$};
\node[draw, rectangle] (2A) at (4, 0) {$A$};
\node[draw, diamond] (2B) at (5, 1) {};
\node[draw, rectangle] (2C) at (6, 0) {$B$};
\path[->] (2B) edge node[above]{$1$} (2A);
\path[->] (2A) edge node[below]{$2$}(2C);
\node[text width=0.5 cm] at (7, 0.5) {$+$};
\node[draw, rectangle] (3A) at (8, 0) {$A$};
\node[draw, diamond] (3B) at (9, 1) {};
\node[draw, rectangle] (3C) at (10, 0) {$B$};
\path[->] (3B) edge node[above]{$2$} (3C);
\path[->] (3C) edge node[below]{$1$} (3A);
\end{tikzpicture}
\end{center}
\vspace{0.4cm}
\item 
\begin{center}
\begin{tikzpicture}[baseline=0.4 cm]
\node[draw, rectangle] (1A) at (0,0) {$A$};
\node[draw, star,star points=5, scale=.7] (1B) at (1,1) {i};
\node[draw, rectangle] (1C) at (2,0) {$B$};
\path[->] (1B) edge node[above]{$2$} (1C);
\path[->] (1B) edge node[above]{$1$} (1A);
\node[text width=0.5 cm] at (3, 0.5) {$=$};
\node[draw, rectangle] (2A) at (4, 0) {$A$};
\node[draw, diamond, scale = 0.7] (2B) at (5, 1) {i};
\node[draw, rectangle] (2C) at (6, 0) {$B$};
\path[->] (2B) edge node[above]{$1$} (2A);
\path[->] (2A) edge node[below]{$2$}(2C);
\node[text width=0.5 cm] at (7, 0.5) {$+$};
\node[draw, rectangle] (3A) at (8, 0) {$A$};
\node[draw, circle, scale= 0.7] (3B) at (9, 1) {i};
\node[draw, rectangle] (3C) at (10, 0) {$B$};
\path[->] (3B) edge node[above]{$2$} (3C);
\path[->] (3C) edge node[below]{$1$} (3A);
\end{tikzpicture}
\end{center}
where $A$ is the vertex that $i$ is attached to as a diamond in $T_1$ or $T_2$ and $B$ is the vertex that $i$ is attached to as a circle in $T_1$ or $T_2$.
\end{enumerate}
\end{enumerate}
\end{theorem}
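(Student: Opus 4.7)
The plan is to mirror the proof strategy of Theorem~\ref{mult} from Section~\ref{cohom_1}, using transversal intersection of the underlying cooriented chains as the geometric model for cup product. For each of the seven cases listed, I would either exhibit a perturbation that forces the chains to be disjoint, match the configuration to the left-hand side of a known vanishing relation from Lemma~\ref{cohom_relations_2}, or confirm honest transversal intersection.

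Cases (1)--(3) go through essentially verbatim from Theorem~\ref{mult}: in (1), overlapping or conflicting rectangles admit generic perturbations into disjoint representatives; in (2) and (3), a reversal of an edge orientation via relation 1(c) of Lemma~\ref{cohom_relations_2} produces representatives whose chains cannot meet. Cases (4) and (5) are immediate specializations of Lemma~\ref{cohom_relations_2}: case (4) is relation 4(a) with $r=0$, and case (5) is relation 4(e) with $r=0$, where absence of diamonds on a $\Dp$-type rectangle means the defining chain is supported in a stratum already forbidden by the ideal. Case (6) is a transversality computation: the defining equations of $T_1\cup T_2$ are precisely the union of the equations for $T_1$ and $T_2$, and concatenating their coorientation bases yields the coorientation of the intersection.

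The substance of the argument is in case (7). Relations (7a) and (7b) are just the 3-term relation (2) of Lemma~\ref{cohom_relations_2} applied locally at a single circle (respectively, diamond) vertex wedged between two rectangles, which suffices to reorient edges at any ordinary vertex of $T_1\cup T_2$. The new phenomenon is the star vertex of (7c), which records an index $i$ attached as a diamond to one rectangle $A$ and as a circle to another rectangle $B$ in the union. I would resolve this by considering the cooriented chain obtained from intersecting the two representatives: on this chain $i$ is simultaneously constrained to lie in the rectangle of $A$ and to be $\leq$-related to the rectangle of $B$, and its boundary splits into the two admissible pictures on the right of (7c), according to whether the edge between $A$ and $B$ is oriented from $A$ to $B$ (and $i$ becomes a true diamond of $A$) or from $B$ to $A$ (and $i$ becomes a true circle attached to $B$).

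The main obstacle will be the bookkeeping of coorientations. Verifying that the signs produced by rearranging the coorientation basis in the star resolution---especially the extra $\partial(i)_d - \partial(x)_d$ contribution coming from a diamond---match those of relation 2 of Lemma~\ref{cohom_relations_2} when the star vertex appears inside a longer linear chain of rectangles is the delicate point. Once this sign check is in hand, the product rule follows by induction on the number of non-admissible features (non-empty rectangle intersections, cycles, misoriented edges, and stars) of $T_1\cup T_2$: each application of (7a)--(7c) strictly decreases this count, and the induction terminates in case (6).
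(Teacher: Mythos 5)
Your overall strategy is the paper's: interpret the cup product as transversal intersection of the cooriented chains, dispose of the vanishing cases either by exhibiting disjoint (or perturbed/reoriented) representatives or by invoking the relations of Lemma \ref{cohom_relations_2}, and resolve a non-admissible union with 3-term-type relations; this is exactly how the paper treats cases (1)--(4), (6) and (7) (its proof of (7) is literally ``combine (6) with relation 2''), and your induction on the number of non-admissible features is a reasonable way to organize the last step.

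The one place your justification does not work is case (5). Relation 4(e) of Lemma \ref{cohom_relations_2} with $r=0$ concerns a $\Dp$-type rectangle with no \emph{circles} attached but with its diamonds still present; that is (one variant of) the paper's justification for case (4), and it says nothing about a rectangle that has lost its diamonds. Your fallback phrase, that the defining chain is ``supported in a stratum already forbidden by the ideal,'' is also not correct as stated: for $(n,m)\in\Dp$ the rectangle contains only $n-1$ points of $X$ and $m-1$ points of $Y$, and their coincidence is permitted by $I$. The paper's actual reason for (5) is that the chains of $T_1$ and $T_2$ cannot meet inside $\M(n_1,n_2)$: the chain of the forest containing that rectangle requires at least one of its diamond indices to coincide with the rectangle's points, and in the situation of case (5) every such index sits in a rectangle of the other forest, so a common point would force a coincidence of too large a cluster, i.e.\ would lie in a forbidden subspace. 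With that correction (and noting that case (4) should cite relation 4 in all its variants, with $r=0$ or $r=s=0$, not only 4(a)), your argument coincides with the paper's.
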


\begin{proof}
\begin{enumerate}
\item If $A \neq B$, then the corresponding chains to $T_1$ and $T_2$ do not intersect in $\M (n_1, n_2)$. If $A = B$, then one can perturb the chains slightly so that they do not intersect.
\item There exist orientations of edges such that the two corresponding chains do not intersect in $\M (n_1, n_2)$.
\item There exist orientations of edges such that the two corresponding chains do not intersect in $\M (n_1, n_2)$.
\item This is relation 4 from Lemma \ref{cohom_relations_2} with $r = 0$ or $r = s = 0$.
\item In this case, the two corresponding chains do not intersect in $\M (n_1, n_2)$.
\item The chains corresponding to $T_1$ and $T_2$ are transversal and their intersection is the chain corresponding to $T_1 \cup T_2$.
\item Combine the proofs of (6) and relation 2 from Lemma \ref{cohom_relations_2}.
\end{enumerate}
\end{proof}

\subsubsection{$I$ not a rectangle and ${\{(2, 0), (1, 1), (0, 2)\} \not\subset I}$}

The difficulty in applying the same definition of admissible forests to this case is the fact that there exist weight one rectangles. This case splits into two sub-cases: whether $(1, 1)$ is in $\Dp$ or not. In the case that $(1, 1) \notin \Dp$, the above construction can be altered so that it works with weight one rectangles. We keep the same space of forests as in the general case except that we now allow rectangles to have no circles attached to them. In addition, because when it comes to the corresponding chain, there is no difference between rectangles containing one element and circles, we add the relation that if a weight one rectangle is attached to at most one rectangle and nothing else, it may be turned into a circle. Similarly, we may turn a circle containing an element of $M$ to a rectangle, provided such rectangles are allowed (similarly for $N$). With these changes, our basis consisting of linear $I$-trees and singletons is also a basis in this scenario. The only change in multiplication is condition 1 from Theorem \ref{mult_2} must additionally assume that $A$ and $B$ both have weight at least $2$.

In the case $(1, 1) \notin \Dp$, this construction does not work. An enlightening example is the following tree:
\begin{center}
\begin{tikzpicture}
\node[draw, rectangle, minimum size = .5cm] (r1) at (0, 0) {};
\node[draw, circle, minimum size=1cm, scale = .7] (c1) at (-.5, 1) {$x_1$};
\node[draw, diamond, minimum size=1cm, scale = .7] (d1) at (.5, 1) {$y_1$};
\node[draw, rectangle, minimum size = .5cm] (r2) at (2, 0) {};
\node[draw, circle, minimum size=1cm, scale = .7] (c2) at (1.5, 1) {$x_2$};
\node[draw, diamond, minimum size=1cm, scale = .7] (d2) at (2.5, 1) {$y_2$};
\path[->, style={sloped}] (r1) edge (r2);
\path[->, style={sloped}] (r1) edge (c1);
\path[->, style={sloped}] (r1) edge (d1);
\path[->, style={sloped}] (r2) edge(c2);
\path[->, style={sloped}] (r2) edge (d2);
\end{tikzpicture}
\end{center}

If the previous space of forests were applicable to the case $(1, 1) \in \Dp, d>1$, then this tree should be dual to the homology element $[\{x_1, y_1\}, \{x_2, y_2\}]$. The problem is the boundary of the cell corresponding to this tree does not live in the complement of $\M(2, 2)$. One way to remedy this is to allow for circle vertices to be connected to two rectangles. Instead of the above tree, we could take the following tree:

\begin{center}
\begin{tikzpicture}
\node[draw, rectangle, minimum size = .5cm] (r1) at (0, 0) {};
\node[draw, diamond, minimum size=1cm, scale = .7] (d1) at (0, 1) {$y_1$};
\node[draw, rectangle, minimum size = .5cm] (r2) at (2, 0) {};
\node[draw, circle, minimum size=1cm, scale = .7] (c1) at (1, 0) {$x_1$};
\node[draw, circle, minimum size=1cm, scale = .7] (c2) at (1.5, 1) {$x_2$};
\node[draw, diamond, minimum size=1cm, scale = .7] (d2) at (2.5, 1) {$y_2$};
\path[->, style={sloped}] (c1) edge (r2);
\path[->, style={sloped}] (r1) edge (c1);
\path[->, style={sloped}] (r1) edge (d1);
\path[->, style={sloped}] (r2) edge(c2);
\path[->, style={sloped}] (r2) edge (d2);
\end{tikzpicture}
\end{center}

In fact, consider linear $I$-trees with the following change: for each weight one rectangle that is not last in the chain of rectangles, the maximum circle attached to it is between it and the next rectangle in the chain. Then our basis of products of linear $I$-trees and singleton circles becomes a basis in this situation. This raises a few questions: what is the full space of forests analogous to the previous situations? How does multiplication behave?

The first question does not have a clear answer. One possibility is that each weight one rectangle should have a circle between it and any other rectangle. Alternatively, one could restrict so that this only need be true between two weight one rectangles. In either case, we want to write any tree as a sum of linear $I$-trees; that is, we want to be able to decrease the degree of rectangles. We consider an example:

\begin{center}
\begin{tikzpicture}
\node[draw, rectangle, minimum size = .5cm] (r1) at (0, 0) {};
\node[draw, rectangle, minimum size = .5cm] (r2) at (4, 0) {};
\node[draw, rectangle, minimum size = .5cm] (r3) at (2, -2) {};
\node[draw, diamond, minimum size=1cm, scale = .7] (d1) at (-1, 1) {$y_1$};
\node[draw, circle, minimum size=1cm, scale = .7] (c1) at (0, 1) {$x_1$};
\node[draw, circle, minimum size=1cm, scale = .7] (c2) at (1, 1) {$x_2$};
\node[draw, diamond, minimum size=1cm, scale = .7] (d2) at (3, 1) {$y_2$};
\node[draw, circle, minimum size=1cm, scale = .7] (c3) at (4, 1) {$x_3$};
\node[draw, circle, minimum size=1cm, scale = .7] (c4) at (5, 1) {$x_4$};
\node[draw, circle, minimum size=1cm, scale = .7] (ca) at (1, -1) {$x_5$};
\node[draw, circle, minimum size=1cm, scale = .7] (cb) at (3, -1) {$x_6$};
\node[draw, diamond, minimum size=1cm, scale = .7] (d3) at (2, -3) {$y_3$};
\path[->, style={sloped}] (r1) edge (d1);
\path[->, style={sloped}] (r1) edge (c1);
\path[->, style={sloped}] (r1) edge (c2);
\path[->, style={sloped}] (r1) edge (ca);
\path[->, style={sloped}] (r2) edge (d2);
\path[->, style={sloped}] (r2) edge (c3);
\path[->, style={sloped}] (r2) edge (c4);
\path[->, style={sloped}] (r2) edge (cb);
\path[->, style={sloped}] (ca) edge (r3);
\path[->, style={sloped}] (cb) edge(r3);
\path[->, style={sloped}] (r3) edge (d3);
\end{tikzpicture}
\end{center}

We may have a tree that has this as a subtree in it and has the bottom empty rectangle attached to other rectangles. Following the same idea in the proof of the three term relation in Lemma \ref{cohom_relations}, we get this tree is equal to the following:

\begin{center}
\scalebox{.8}{
\begin{tikzpicture}
\node[draw, rectangle, minimum size = .5cm] (r1) at (0, 0) {};
\node[draw, rectangle, minimum size = .5cm] (r2) at (4, 0) {};
\node[draw, rectangle, minimum size = .5cm] (r3) at (2, -2) {};
\node[draw, diamond, minimum size=1cm, scale = .7] (d1) at (-1, 1) {$y_1$};
\node[draw, circle, minimum size=1cm, scale = .7] (c1) at (0, 1) {$x_1$};
\node[draw, circle, minimum size=1cm, scale = .7] (c2) at (1, 1) {$x_2$};
\node[draw, diamond, minimum size=1cm, scale = .7] (d2) at (3, 1) {$y_2$};
\node[draw, circle, minimum size=1cm, scale = .7] (c3) at (4, 1) {$x_3$};
\node[draw, circle, minimum size=1cm, scale = .7] (c4) at (5, 1) {$x_4$};
\node[draw, circle, minimum size=1cm, scale = .7] (ca) at (1, -1) {$x_5$};
\node[draw, circle, minimum size=1cm, scale = .7] (cb) at (3, -1) {$x_6$};
\node[draw, diamond, minimum size=1cm, scale = .7] (d3) at (2, -3) {$y_3$};
\path[->, style={sloped}] (r1) edge (d1);
\path[->, style={sloped}] (r1) edge (c1);
\path[->, style={sloped}] (r1) edge (c2);
\path[->, style={sloped}] (r1) edge (ca);
\path[->, style={sloped}] (r2) edge (d2);
\path[->, style={sloped}] (r2) edge (c3);
\path[->, style={sloped}] (r2) edge (c4);
\path[->, style={sloped}] (r2) edge (cb);
\path[->, style={sloped}] (r1) edge (r2);
\path[->, style={sloped}] (cb) edge(r3);
\path[->, style={sloped}] (r3) edge (d3);

\node[text width = .5 cm] at (6.5, -1) {$-$};

\node[draw, rectangle, minimum size = .5cm] (r1) at (8, 0) {};
\node[draw, rectangle, minimum size = .5cm] (r2) at (12, 0) {};
\node[draw, rectangle, minimum size = .5cm] (r3) at (10, -2) {};
\node[draw, diamond, minimum size=1cm, scale = .7] (d1) at (7, 1) {$y_1$};
\node[draw, circle, minimum size=1cm, scale = .7] (c1) at (8, 1) {$x_1$};
\node[draw, circle, minimum size=1cm, scale = .7] (c2) at (9, 1) {$x_2$};
\node[draw, diamond, minimum size=1cm, scale = .7] (d2) at (11, 1) {$y_2$};
\node[draw, circle, minimum size=1cm, scale = .7] (c3) at (12, 1) {$x_3$};
\node[draw, circle, minimum size=1cm, scale = .7] (c4) at (13, 1) {$x_4$};
\node[draw, circle, minimum size=1cm, scale = .7] (ca) at (9, -1) {$x_5$};
\node[draw, circle, minimum size=1cm, scale = .7] (cb) at (11, -1) {$x_6$};
\node[draw, diamond, minimum size=1cm, scale = .7] (d3) at (10, -3) {$y_3$};
\path[->, style={sloped}] (r1) edge (d1);
\path[->, style={sloped}] (r1) edge (c1);
\path[->, style={sloped}] (r1) edge (c2);
\path[->, style={sloped}] (r3) edge (ca);
\path[->, style={sloped}] (r2) edge (d2);
\path[->, style={sloped}] (r2) edge (c3);
\path[->, style={sloped}] (r2) edge (c4);
\path[->, style={sloped}] (r2) edge (cb);
\path[->, style={sloped}] (r1) edge (r2);
\path[->, style={sloped}] (cb) edge(r3);
\path[->, style={sloped}] (r3) edge (d3);

\node[text width = .5 cm] at (15.5, -1) {$+$};

\node[draw, rectangle, minimum size = .5cm] (r1) at (2, -6) {};
\node[draw, rectangle, minimum size = .5cm] (r2) at (6, -6) {};
\node[draw, rectangle, minimum size = .5cm] (r3) at (4, -8) {};
\node[draw, diamond, minimum size=1cm, scale = .7] (d1) at (1, -5) {$y_1$};
\node[draw, circle, minimum size=1cm, scale = .7] (c1) at (2, -5) {$x_1$};
\node[draw, circle, minimum size=1cm, scale = .7] (c2) at (3, -5) {$x_2$};
\node[draw, diamond, minimum size=1cm, scale = .7] (d2) at (5, -5) {$y_2$};
\node[draw, circle, minimum size=1cm, scale = .7] (c3) at (6, -5) {$x_3$};
\node[draw, circle, minimum size=1cm, scale = .7] (c4) at (7, -5) {$x_4$};
\node[draw, circle, minimum size=1cm, scale = .7] (ca) at (3, -7) {$x_5$};
\node[draw, circle, minimum size=1cm, scale = .7] (cb) at (5, -7) {$x_6$};
\node[draw, diamond, minimum size=1cm, scale = .7] (d3) at (4, -9) {$y_3$};
\path[->, style={sloped}] (r1) edge (d1);
\path[->, style={sloped}] (r1) edge (c1);
\path[->, style={sloped}] (r1) edge (c2);
\path[->, style={sloped}] (r1) edge (ca);
\path[->, style={sloped}] (r2) edge (d2);
\path[->, style={sloped}] (r2) edge (c3);
\path[->, style={sloped}] (r2) edge (c4);
\path[->, style={sloped}] (r2) edge (cb);
\path[->, style={sloped}] (ca) edge (r3);
\path[->, style={sloped}] (r2) edge(r1);
\path[->, style={sloped}] (r3) edge (d3);

\node[text width = .5 cm] at (8.5, -7) {$-$};

\node[draw, rectangle, minimum size = .5cm] (r1) at (10, -6) {};
\node[draw, rectangle, minimum size = .5cm] (r2) at (14, -6) {};
\node[draw, rectangle, minimum size = .5cm] (r3) at (12, -8) {};
\node[draw, diamond, minimum size=1cm, scale = .7] (d1) at (9, -5) {$y_1$};
\node[draw, circle, minimum size=1cm, scale = .7] (c1) at (10, -5) {$x_1$};
\node[draw, circle, minimum size=1cm, scale = .7] (c2) at (11, -5) {$x_2$};
\node[draw, diamond, minimum size=1cm, scale = .7] (d2) at (13, -5) {$y_2$};
\node[draw, circle, minimum size=1cm, scale = .7] (c3) at (14, -5) {$x_3$};
\node[draw, circle, minimum size=1cm, scale = .7] (c4) at (15, -5) {$x_4$};
\node[draw, circle, minimum size=1cm, scale = .7] (ca) at (11, -7) {$x_5$};
\node[draw, circle, minimum size=1cm, scale = .7] (cb) at (13, -7) {$x_6$};
\node[draw, diamond, minimum size=1cm, scale = .7] (d3) at (12, -9) {$y_3$};
\path[->, style={sloped}] (r1) edge (d1);
\path[->, style={sloped}] (r1) edge (c1);
\path[->, style={sloped}] (r1) edge (c2);
\path[->, style={sloped}] (r1) edge (ca);
\path[->, style={sloped}] (r2) edge (d2);
\path[->, style={sloped}] (r2) edge (c3);
\path[->, style={sloped}] (r2) edge (c4);
\path[->, style={sloped}] (r2) edge (r1);
\path[->, style={sloped}] (ca) edge (r3);
\path[->, style={sloped}] (r3) edge(cb);
\path[->, style={sloped}] (r3) edge (d3);

\end{tikzpicture}
}
\end{center}

These are not in the space of allowable trees; in each of the four summands, there are weight one rectangles adjacent to each other. Thus, we need to ``simplify" these more. This is possible to do, but it depends on what other rectangles the upper two rectangles are connected to. In each step in the simplification, there are trees not in the space of admissible trees. It may be possible to redefine admissible trees so that these intermediate trees are admissible. The trouble with that is the chains corresponding to each individual tree does not have boundary in the complement to $\M$. It is only when considered together that their collective boundary is in the complement.

\section{General Polychromatic Configuration Spaces}\label{tricolored}

Recall the main difference between the homology of decreasing polychromatic configuration spaces and the homology of bicolored configuration spaces: the homology of decreasing polychromatic configuration spaces is generated as an $\mathcal{M}_d$ module by the homology of $\M (\vec{n})$ for $\vec{n} \in \C$ while the homology of bicolored configuration spaces is generated as an $\mathcal{M}_d$ module by the homology of $\M (n_1, n_2)$ for $(n_1, n_2) \in \C$ and $\M (n_1+1, n_2+1)$ for $(n_1, n_2) \in \D$. That is, there exists a new type of class in the general setting that is not present in the decreasing setting. The obvious question to ask next is what happens for higher $m$. For this, we will focus on $m = 3$. However, before doing so, we will discuss a particular example in $m = 2$.

Consider the following ideal in $\N^2$: $I = \{ (\ell_1, \ell_2) | 0 \leq \ell_i \leq 2\text{ for all }i \}$. This is not a particularly interesting example. Thus, consider $I' = I \cup \bigcup_{i=1}^2 \{3 \vec{e_i}\}$. That is $I'$ is just $I$ with one $2$-tuple added along each axis. Now, $H_3 \Mp (3, 3) = \Z$ while $H_3 \M(3, 3) = 0$. A generator for this additional class that appears is $\{x_1, x_2, x_3, \{y_1, y_2, y_3\} \}$.

Now consider the similar construction in $m = 3$. That is, $I = \{ (\ell_1, \ell_2, \ell_3) | 0 \leq \ell_i \leq 2\text{ for all }i \}$ and $I' = I \cup \bigcup_{i=1}^3 \{3 \vec{e_i}\}$. Now, $\Mp (3, 3, 3)$ is the complement to a subspace arrangement. Thus, using the formula of Goresky-MacPherson \cite{GM}, one can compute the cohomology groups, and thus, homology groups of $\Mp (3, 3, 3)$. Using this formula, one can see $H_5 \Mp (3, 3, 3) = \Z$ while $H_5(\M(3,3,3)) = 0$. The question is: what is this class?

An understandable impulsive reaction is $\{x^1_1, x^1_2, x^1_3, \{x^2_1, x^2_2, x^2_3, \{x^3_1, x^3_2, x^3_3 \}\} \}$. However, said class would not live in $\Mp$ for it contains points where $x^1_1 = x^1_2 = x^1_3 = x^2_1=x^2_2$. The same argument disqualifies $\{x^1_1, x^1_2, x^1_3, \{x^2_1, x^2_2, x^2_3\}, \{x^3_1, x^3_2, x^3_3 \} \}$.

Another asymmetry between the two cases presented is the following. When $m = 2$, we can instead consider $I' = I \cup \bigcup_{i=1}^1 \{3 \vec{e_i}\}$. Again, we would have $H_3 \Mp (3, 3) = \Z$. In the $m = 3$ case, we could either consider $I' = I \cup \bigcup_{i=1}^1 \{3 \vec{e_i}\}$ or $I' = I \cup \bigcup_{i=1}^2 \{3 \vec{e_i}\}$. In either case $H_5 \Mp (3, 3, 3) = 0$. Thus, there is an asymmetry in when the new classes appear. Furthermore, one can show it cannot be expressed as iterated curly brackets in the sense of the new class for $m=2$ was. Thus, a new type of bracket must be introduced, say $\{x^1_1, x^1_2, x^1_3, x^2_1, x^2_2, x^2_3, x^3_1, x^3_2, x^3_3\}_2$. One can show that for $I'$, this is the only new bracket that is needed. This leads to further questions which will be mentioned in the following section.

We may not be able to fully compute the homology of general polychromatic configuration spaces, but this does not mean we can't say anything about them.

\begin{theorem} \label{Rep_Stab}
For any ideal $I \subset \N^m$ and $d > 1$, $H_i(\M)$ exhibit representation stability.
\end{theorem}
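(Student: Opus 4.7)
The plan is to realize $\vec{n} \mapsto H_i(\M(\vec{n}))$ as a finitely generated module over the category $FI^m$, the $m$-fold Cartesian product of the category $FI$ of finite sets and injections, and then invoke the general theorem (Church--Ellenberg--Farb, together with its multi-index extension by Sam--Snowden) that a finitely generated $FI^m$-module is multivariate representation stable. Thus there are really two steps: build the $FI^m$-structure, and prove finite generation in degree $i$.

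First I would construct the $FI^m$-structure. A morphism $(f_1, \ldots, f_m) \colon \vec{a} \to \vec{b}$ in $FI^m$ induces a map $\M(\vec{a}) \to \M(\vec{b})$ by relabeling the existing color-$j$ point $x_i^j$ as $x_{f_j(i)}^j$ and placing each new color-$j$ point at a sufficiently remote location in $\R^d$. Since $\R^d$ is unbounded, such locations can always be chosen so that no forbidden intersection arises, and the space of valid choices is contractible, so the induced map on $H_i$ is well-defined. This is essentially the operadic left action of $\B_d$ on $\B_{I,d}$ restricted to inserting ``free'' unary discs, so functoriality and $(S_{n_1} \times \cdots \times S_{n_m})$-equivariance are automatic. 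Hence $H_i(\M(\cdot))$ acquires the structure of an $FI^m$-module over $\Z$.

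The main obstacle is finite generation. I would adapt the homotopy-sliding arguments already in the paper (proofs of Theorems \ref{hom_proof} and \ref{hom_proof_2}): given any closed $s$-chain with $s \leq i$ in $\M(\vec{n})$, push one chosen color-$j$ coordinate to infinity; the resulting trace chain meets only finitely many types of forbidden strata, and each tubular-neighborhood contribution decomposes as a local class on the collided coordinates together with an $FI^m$-insertion of a free coordinate into a class on strictly fewer points. The local building blocks that can appear have weight $w$ bounded by a function of $i$ and $d$, because each such class contributes homological degree at least $(w-1)d - 1$ and iterated left-action brackets add only $d-1$ per factor. Consequently only finitely many base multi-indices $\vec{n}_0$ support generators in degree $i$. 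For general ideals, where this paper does not give an explicit generation theorem, the finite-complexity bound still follows either from the sliding recursion above or from the Goresky--MacPherson formula applied to the intersection poset of the arrangement, whose contributing chains have length bounded in terms of $i$ independently of $\vec{n}$. Once finite generation is in hand, $FI^m$-Noetherianity and the multivariate version of the Church--Ellenberg--Farb structure theorem deliver representation stability.
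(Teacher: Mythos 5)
Your overall strategy --- endow $\vec n \mapsto H_i(\M(\vec n))$ with an $\mathrm{FI}^m$-module structure by inserting far-away points, prove finite generation in each homological degree, and invoke Noetherianity and the multivariate representation-stability theorem --- is exactly the mechanism behind the result the paper appeals to: the paper's proof consists of citing Gadish's work on families of linear subspace arrangements (adapted from $\mathbb{C}$ to $\R^d$, $d>1$), whose content is precisely this finite-generation statement for arrangement complements indexed by an $\mathrm{FI}$-type category. So your architecture matches the cited machinery; the difficulty is that your justification of the crucial step, finite generation, is not a proof.

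Concretely: the suggestion to ``adapt the sliding arguments'' of Theorems \ref{hom_proof} and \ref{hom_proof_2} assumes the open part of the problem. Those arguments are carried out only for decreasing ideals and for bicolored ideals; Section \ref{tricolored} explains why they do not extend naively, since for $m \ge 3$ the forbidden strata met by the sliding homotopy intersect each other and produce genuinely new classes (the bracket $\{x^1_1, \ldots, x^3_3\}_2$), and already for $m=2$ the recursion needed the auxiliary colors and the $g_0,g_1$ bookkeeping of the appendix. Your second route is closer to the mark: the Goresky--MacPherson count correctly shows, for $d>1$, that a stratum contributing to degree $i$ involves a number of colliding points bounded in terms of $i$ and $d$ (essentially $(d-1)\sum(\ell_j-1) \le i+1$). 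But the Goresky--MacPherson isomorphism is not natural, so a support bound by itself does not give generation of $H_i(\M(\vec n))$ under the stabilization maps; one must make the decomposition (or a substitute spectral-sequence argument) functorial for the $\mathrm{FI}^m$-maps, and that is exactly the technical content of the work the paper cites --- at this point your argument either has a hole or silently re-invokes the theorem it is meant to prove. Two smaller corrections: the locus of admissible positions for an added point is homotopy equivalent to $S^{d-1}$, not contractible --- its connectedness (hence the hypothesis $d>1$) is what makes the induced map on $H_i$ well defined; and representation stability in the Church--Farb sense requires rational coefficients, so the last step should not be run over $\Z$.
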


The theory of representation stability was first introduced by Church and Farb \cite{CF}. Church showed that for any connected, orientable manifold, $M$, $H_i (C_n(M), \Q)$ are representation stable where $C_n(M)$ denotes the configuration space of $n$ points on $M$ \cite{Church}. The proof of Theorem \ref{Rep_Stab} follows from recent work by Gadish \cite{G1, G2}. While Gadish's work focuses on complex subspace arrangements, straightforward alterations allow us to apply it to $\R^d$ when $d > 1$.


\section{Further Questions} \label{further}

If the reader just finished reading the previous section, there should be one obvious question in mind: for a given $m$, how many types of generators must be added to generate $\M$ as a $\mathcal{M}_d$ module? In $m = 1$, there is a single type of generator, in $m = 2$ there are two types of generators. It was shown in the previous section that in $m = 3$, there must be at least $3$ types of generators. However, it is not hard to argue there must be at least $4$, since there exist ideals where $\{x^1_1, x^1_2, x^1_3, \{x^2_1, x^2_2, x^2_3, \{x^3_1, x^3_2, x^3_3 \}\} \}$ is a valid generator.

More than just how many types of generators are needed for general $m$, what are they? For a general polychromatic configuration space, what is its homology generated by as a $\mathcal{M}_d$ module?

Furthermore, we only discussed polychromatic configuration spaces of $\R^d$. The definition of a polychromatic configuration space makes sense for any topological space. What can be said about the homology and cohomology of polychromatic configuration spaces of a general manifold? CW-complex? These questions still need work replacing polychromatic configuration spaces with no-$k$-equal spaces.

Revisiting the bicolored situation, we did not have a description of the cohomology ring when $(1, 1) \in \Dp$. What is a description of the cohomology ring in the case that $(1, 1) \in \Dp$?

One thing that is not so obvious from the way I presented this material is a sort of duality between the conditions for the bases in homology and cohomology. Fix $n_1, n_2 \in \N$. Let $I \subset \N^2$ be an ideal. Let $\bar{I}$ be the ideal such that $(n, m) \in \bar{I}$ if and only if $(n_1-n, n_2-m) \in I$. Call $\bar{I}$ the $(n_1, n_2)-complement$ to $I$. Suppose $(n, m) \in \C$. Then $[ \ldots [ [ \ldots [\{x_{i_1}, \ldots, x_{i_n}, y_{j_1}, \ldots, y_{j_m} \}, x_{i'_1}] \ldots x_{i'_{a}} ], y_{j'_1}] \ldots y_{j'_b}]$ represents an element in $H_* \M(n_1, n_2)$. There are certain ordering conditions that are always present, but we also have a decision tree to determine which additional restrictions are imposed:

\begin{center}
\begin{tikzpicture}[baseline=0.4 cm]
\node[draw, rectangle] (1) at (0,0) {$(n+1, m-2)\in I ?$};
\node[draw, rectangle] (2A) at (4,-2) {$(n-1, m+1) \in I?$};
\node[draw, rectangle] (2B) at (-4,-2) {$(n-1, m+1) \in I?$};
\path[->] (1) edge node[above]{Y} (2A);
\path[->] (1) edge node[above]{N} (2B);
\node[draw, rectangle] (2AA) at (6, -4) {$i_n > i'_a$ and $j_m > j'_b$};
\node[draw, rectangle] (2AB) at (2, -4) {$i_n > i'_a$};
\node[draw, rectangle] (2BA) at (-2, -4) {$j_m > j'_b$};
\node[draw, rectangle] (2BB) at (-6, -4) {$i_n > i'_a$ or $j_m > j'_b$};
\path[->] (2A) edge node[above]{Y} (2AA);
\path[->] (2A) edge node[above]{N} (2AB);
\path[->] (2B) edge node[above]{Y} (2BA);
\path[->] (2B) edge node[above]{N} (2BB);
\end{tikzpicture}
\end{center}
\vspace{.2 in}

Now suppose $\bar{I}$ is not a rectangle. Then $(\bar{n}, \bar{m}) = (n_1-n, n_2-m) \in \mathcal{C}_{\bar{I}}'$. Consider the conditions for a rectangle vertex corresponding to $(\bar{n}, \bar{m})$:

\begin{center}
\begin{tikzpicture}[baseline=0.4 cm]
\node[draw, rectangle] (1) at (0,0) {$(\bar{n}-1, \bar{m}+2)\in I ?$};
\node[draw, rectangle] (2A) at (4,-2) {$(\bar{n}+1, \bar{m}-1) \in I?$};
\node[draw, rectangle] (2B) at (-4,-2) {$(\bar{n}+1, \bar{m}-1) \in I?$};
\path[->] (1) edge node[above]{N} (2A);
\path[->] (1) edge node[above]{Y} (2B);
\node[draw, rectangle] (2AA) at (6, -4) {$k^N_i > \ell^N_i$ and $k^M_i > \ell^M_i$};
\node[draw, rectangle] (2AB) at (2, -4) {$k^N_i > \ell^N_i$};
\node[draw, rectangle] (2BA) at (-2, -4) {$k^M_i > \ell^M_i$};
\node[draw, rectangle] (2BB) at (-6, -4) {$k^N_i > \ell^N_i$ or $k^M_i > \ell^M_i$};
\path[->] (2A) edge node[above]{N} (2AA);
\path[->] (2A) edge node[above]{Y} (2AB);
\path[->] (2B) edge node[above]{N} (2BA);
\path[->] (2B) edge node[above]{Y} (2BB);
\end{tikzpicture}
\end{center}
\vspace{.2 in}

The similarity in these trees is striking and can be attributed to the way the bases of homology and cohomology were chosen. The homology relations proceed by including an extra element in the curly bracket and then taking turns removing each element. The cohomology relations proceed by including one fewer element in the rectangle vertex and then taking turns adding in each element attached to it. These two actions work in harmony to produce the similarity in the restrictions above. One question is: is there anything else? Is this duality in the restrictions simply a coincidence in the basis I chose to work with or is there something deeper relating the spaces $\M (n_1, n_2)$ and $\mathcal{M}_{\bar{I}, d} (n_1, n_2)$? One can furthermore ask the same question for higher $m$.


\begin{appendices}
\section{Appendix}
In this section we prove Lemma \ref{main_lemma}. In order to prove Lemma \ref{main_lemma}, we will first prove a few technical lemmas. These lemmas will decrease $g_1(\gamma)$. For $(n-1, m-1) \in \D$, two of the lemmas will involve $\min\{k : f_I(k) = f_I(m-1)\}$.

\begin{defin}
For all $a \in \N$, let $m_{a}= \min\{k : f_I(k) = f_I(a)\}$.
\end{defin}
 
The proof involves removing from chains their intersections with tubular neighborhoods of subspaces. Many times, these subspaces lie in the complement of $\Mp (n_1, \ldots, n_{\ell_I+3})$. However, there are cases where they do not. For these, we only want to remove tubular neighborhoods of subsets of these subspaces. We restrict by using distances between points. In the following definition and lemmas, we will use the set $\{1, 2, \ldots, n\}$ multiple times. Thus, for notational convenience, we will denote it by $\underline{n}$.

\begin{defin}
For any set $A$ of distinct elements from $\underline{n_2}$, let $\tilde{A} = (\R^d)^{(n_1+\ldots + n_{\ell_I+3})} \cap \{ y_{j_1} = \ldots = y_{j_{k}} \}$ where $A= \{j_1, \ldots, j_k\}$. For any point $\bar{x} \in \gamma\cap\tilde{A}$, let $d_{\gamma, A}^{(b, c)}(\bar{x}) = \min_{K} \max_{k \in K} \{ d(y_{j_1}, k) \}$ where the minimum ranges over all $K$ containing $b$ distinct $x$ coordinates and $c$ distinct $y$ coordinates not in $\{y_j : j \in A\}$.
\end{defin}

That is, $d_{\gamma, A}^{(b, c)}(\bar{x})$ is the minimum radius, $r$, such that the ball of radius $r$ centered at $y_{j_1}$ contains $b$ points of color one and $c$ points of color two not labeled by an element of $A$.

With this notation, we now state and prove the technical lemmas.

\begin{lemma}
\label{minor_lemma_1}
Let $(n_1, n_2) \in \N^2$ with $n_1 > 0$. Suppose organized classes span $\Mp(\tilde{n}_1, \ldots, \tilde{n}_{\ell_I+3})$ for all $(\tilde{n}_1, \tilde{n}_2) < (n_1, n_2)$. Let $a \in N$. Let $\gamma$ be a closed $s$-chain in $\Mp (n_1, \ldots, n_{\ell_I+3})$ such that $g_0(\gamma) \leq f_I(1)+1$ and $g_1(\gamma) \leq f_I(a)$. Furthermore, suppose $f_I(a+1) < f_I(a)-1$ and $f_I(a-1) = f_I(a)$. Then $[\gamma] = [\gamma_1] + [\gamma_2]$ where $[\gamma_1]$ is organized and $\gamma_2$ satisfies one the following:
\begin{itemize}
\item $[\gamma_2]=0$
\item there exist $0<\epsilon_1<\epsilon_2$ such that $\gamma_2$ satisfies the following:
\begin{itemize}
\item $g_0(\gamma_2) \leq f_I(1)+1$
\item $g_1(\gamma_2) \leq f_I(a)$
\item for any $A$ with $|A| = f_I(a)+1$ and any $\bar{x} \in \gamma_2 \cap \tilde{A}, d_{\gamma_1, A}^{(m_{a}, 0)}(\bar{x}) > \epsilon_2$
\item for any $A$ with $|A| = f_I(a)$ and any $\bar{x} \in \gamma_2 \cap \tilde{A}, d_{\gamma_1, A}^{(m_{a}, 1)}(\bar{x}) > \epsilon_2$
\item for any $A$ with $|A| = f_I(a)$ and any $\bar{x} \in \gamma_2 \cap \tilde{A}, d_{\gamma_1, A}^{(m_{a}, 0)}(\bar{x}) \in [0, \epsilon_1) \cup (\epsilon_2, \infty)$
\end{itemize}
\end{itemize}
\end{lemma}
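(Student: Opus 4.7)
The strategy is to progressively peel organized classes off $\gamma$ by excising tubular neighborhoods of coincidence loci, leaving a remainder $\gamma_2$ with the required distance separation. Fix $\epsilon_2 > 0$ small enough that the relevant strata intersect $\gamma$ transversally and have mutually disjoint tubular neighborhoods of radius $\epsilon_2$, and then pick $\epsilon_1 \in (0, \epsilon_2)$.

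The forbidden-stratum cases follow the same template as the proof of Theorem \ref{hom_proof_2}. When $|A| = f_I(a)+1$ and $d^{(m_a, 0)}_{\gamma, A}(\bar{x}) \leq \epsilon_2$, some $m_a$ x-coordinates lie within distance $\epsilon_2$ of the cluster of coinciding y's; since $f_I(m_a) = f_I(a) < |A|$, the diagonal on which these $m_a$ x's and $f_I(a)+1$ y's all coincide lies outside $\Mp$. Excising its tubular neighborhood intersected with $\gamma$ leaves a boundary chain factoring as an augmented local class substituted into a cycle on an auxiliary configuration space with strictly smaller $(\tilde n_1, \tilde n_2)$; the inductive hypothesis renders this piece organized. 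The parallel case $|A| = f_I(a)$ with $d^{(m_a, 1)}_{\gamma, A}(\bar{x}) \leq \epsilon_2$ is handled identically, since it again forces $m_a$ x's to meet $f_I(a)+1$ y's on a stratum outside $\Mp$.

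The intermediate-range case, $|A| = f_I(a)$ with $d^{(m_a, 0)}_{\gamma, A}(\bar{x}) \in [\epsilon_1, \epsilon_2]$, is subtler because the stratum where $m_a$ x's coincide with $f_I(a)$ y's does lie inside $\Mp$ and cannot be excised outright. Here I would apply a radial rescaling homotopy of the $m_a$ nearest x-coordinates about the y-cluster, producing an $(s+1)$-cobordism $\Gamma$ whose transverse intersections with the level sets at radii $\epsilon_1$ and $\epsilon_2$ are excised. Each such intersection contributes an augmented-local-class factor (from $\C$ or, via the added $w$- or $z$-coordinates of $\Mp$, a $\D$-style nested sphere such as $\{x_1, \ldots, x_{m_a+1}, \{y_1, \ldots, y_{f_I(a)+1}\}\}$) multiplied by a chain in a space of strictly smaller $(\tilde n_1, \tilde n_2)$, which is organized by induction; the remainder of $\Gamma$ cobounds $\gamma$ and the desired $\gamma_2$.

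The main obstacle is controlling the rescaling homotopy. Unlike the forbidden-stratum excisions, the homotopy could push one cluster of $\gamma$ into collision with a neighboring one, or create new coincidences that violate $g_0$ or $g_1$. The hypotheses $f_I(a+1) < f_I(a)-1$ and $f_I(a-1) = f_I(a)$ enter essentially here: the first rules out spurious coincidences of $m_a + 1$ x-coordinates with $f_I(a)$ y-coordinates encountered during the rescaling, and the second gives $m_a \leq a-1$, which ensures enough ``room" in $\Mp$ to accommodate the sphere classes produced and to keep the rescaled configurations admissible. Since every operation is either a controlled excision or a small homotopy, neither $g_0(\gamma_2)$ nor $g_1(\gamma_2)$ can exceed the respective bounds for $\gamma$, yielding the two-case conclusion.
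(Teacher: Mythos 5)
Your general template (excise tubular neighborhoods, use the inductive hypothesis on smaller $(\tilde n_1,\tilde n_2)$ to recognize the excised pieces as organized) is the right family of ideas, but the way you deploy it has a genuine gap. First, $\gamma$ is a closed chain in $\Mp$, so it never meets a forbidden diagonal; the factorization ``boundary of a sufficiently small tubular neighborhood $=$ augmented local class substituted into a chain on a smaller space'' is only available for an $(s+1)$-cobordism that actually crosses forbidden strata. In the paper that cobordism is produced by translating one coordinate $x_q$ off to infinity, and the lemma is proved by induction on $q=1,\dots,n_1$, with the distance conditions accumulating coordinate by coordinate, the tubular radii $r$ chosen small at each step so that the conditions already obtained survive, and the organized piece $N\cdot x_q$ collected at the far end $t=M$ of the homotopy. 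Your first two cases have no such homotopy: a point of $\gamma$ with $d^{(m_a,0)}_{\gamma,A}(\bar x)\le\epsilon_2$ is a legal configuration at distance comparable to $\epsilon_2$ from the forbidden diagonal, so a small tubular neighborhood of that diagonal neither contains it nor produces a boundary chain that factors through a local class; excising directly from $\gamma$ does not do what you claim.

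Second, and more seriously, your intermediate-range step asserts that the intersections of a radial-rescaling cobordism with the level spheres at radii $\epsilon_1$ and $\epsilon_2$ are organized (augmented local classes, possibly $\D$-type nested spheres, times chains on smaller spaces). No justification is given, and none is available at this stage: those level sets lie in the interior of $\Mp$, away from every diagonal, so their intersection with a general chain has no reason to factor through local classes. This is exactly the difficulty that Lemma \ref{minor_lemma_1} is designed not to solve but to postpone: in the paper the intermediate-distance configurations are swept into the remainder $\gamma_2$ together with the gap condition $d\in[0,\epsilon_1)\cup(\epsilon_2,\infty)$ (conditions 3 and 4 hold for $\gamma$ itself by compactness and $g_1(\gamma)\le f_I(a)$; the gap is created coordinate by coordinate during the translation surgeries), and only Lemma \ref{minor_lemma_2} --- using that gap, a homotopy of the $y$-coordinates, the auxiliary $w$-colors, and the disjointness facts $T\cap\gamma_2=\emptyset$, $\Gamma\cap\partial T=\emptyset$ --- extracts the $\D$-type classes. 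Your rescaling step would have to prove precisely that, and as written it also provides no mechanism (analogous to the paper's induction on $q$ with shrinking $r$) guaranteeing that repairing the condition near one cluster does not destroy it near the others, nor a correct account of where the hypotheses enter ($f_I(a-1)=f_I(a)$ gives $m_a\le a-1$, used in the distance-preservation estimate; it is not about ``room'' for sphere classes).
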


\begin{proof}
In this case, $(a, f_I(a)) \in \D$; suppose $a = \alpha_k$. We will use induction to prove a more general statement. We will show by induction that for all $q \leq n_1$, $[\gamma] = [\gamma_1^q] + [\gamma_2^q]$ where $[\gamma_1^q]$ is organized and $\gamma_2^q$ satisfies one of the following:
\begin{itemize}
\item $[\gamma_2^q]=0$
\item there exist $0<\epsilon^q_1<\epsilon^q_2$ such that $\gamma_2^q$ satisfies the following:
\begin{itemize}
\item $g(\gamma_2^q) \leq f_I(1)+1$
\item $g_1(\gamma_2^q) \leq f_I(a)$
\item for any $A$ with $|A| = f_I(a)+1$ and any $\bar{x} \in \gamma_2^q \cap \tilde{A}, d_{\gamma_2^q, A}^{(m_a, 0)}(\bar{x}) > \epsilon_2^q$
\item for any $A$ with $|A| = f_I(a)$ and any $\bar{x} \in \gamma_2^q \cap \tilde{A}, d_{\gamma_2^q, A}^{(m_a, 1)}(\bar{x}) > \epsilon_2^q$
\item for any $A$ with $|A| = f_I(a)$ and any $\bar{x} \in \gamma_2^q \cap \tilde{A}, d(y_j, x_i) \in [0, \epsilon_1^q) \cup (\epsilon_2^q, \infty)$ for all $i \leq q$ and $j \in A$.
\end{itemize}
\end{itemize}

Suppose $\gamma$ is a closed $s$-chain in $\Mp(n_1, \ldots, n_{\ell_I+3})$ such that $g_1(\gamma) \leq f_I(a)$ and $g_0(\gamma) \leq f_I(1)+1$. Let $\gamma_2^0 = \gamma$. The first and second conditions are true by assumption. The fifth condition is vacuously true. To get the third condition, notice that for all $A$ with $|A| = f_I(a) +1$ and any $\bar{x} \in \gamma_2^q \cap \tilde{A}$, we have  $d_{\gamma_2^q, A}^{(m_a, 0)}(\bar{x}) > 0$. Because $\gamma$ is compact, there exists $\delta_1>0$ such that $d_{\gamma_2^q, A}^{(m_a, 0)}(\bar{x}) > \delta_1$. Similarly, there exists $\delta_2>0$ such that for any $A$ with $|A| = f_I(a)$ and any $\bar{x} \in \gamma_2^q \cap \tilde{A}, d_{\gamma_2^q, A}^{(m_a, 1)}(\bar{x}) > \delta_2$. Letting $\epsilon_2^0 = \min \{\delta_1, \delta_2\}$ and $\epsilon_1^0 = \epsilon_2^0/2$ gives the claim for $q = 0$.

Now, suppose $0 < q \leq n_1$ and the claim holds for all $\tilde{q} < q$. Consider the homotopy of $\gamma_2^{q-1}$ affecting only the $x_{q}$ coordinate, $\gamma_t = \gamma_2^{q-1} + v_q \cdot t$ where $v_q$ is a vector that is non-zero only in the $x_q$ coordinate. For large enough $t$, say $t=M$, the $x_q$ coordinate is always far away from all other points. Call the $(s+1)$-chain given by this homotopy $\Gamma$. $\Gamma$ may not be a chain in $\Mp(n_1, \ldots, n_{\ell_I+3})$. It may intersect forbidden subspaces of the forms:
\begin{center}
$x_q = y_{j_1} = \ldots = y_{j_{f_I(1)+1}}$\\
$x_q = z_j$ \\
$x_q = x_{i_2} = \ldots = x_{i_{u+1}}$\\
$x_q = x_{i_2} = \ldots = x_{i_{\alpha_m+1}} =$ $^mw_j$\\
$x_q = x_{i_2} = \ldots = x_{i_{b}} = y_{j_1} = \ldots = y_{j_c}$ where $b>a, 1\leq c \leq f_I(a)$\\
\end{center}

In the first case, remove a sufficiently small tubular neighborhood. The intersection of $\Gamma$ with its boundary is $N|_{z_{n_3+1} = \{ x_q, y_{j_1}, \ldots, y_{j_{f_I(1)+1}} \}}$ where $N \in H_*\Mp(n_1-1, n_2-(f_I(1)+1), n_3+1, \ldots, n_{\ell_I+3})$.

In the second case, again remove a sufficiently small tubular neighborhood. The intersection of $\Gamma$ with the boundary of this neighborhood is $N|_{z_{j} = [ x_q, z_j]}$ where $N \in H_*\Mp(n_1-1, n_2, n_3, \ldots, n_{\ell_I+3})$.

The third case only occurs if $(u, 0) \notin I$. If so, proceed as in the two preceding cases. Remove a small tubular neighborhood. The intersection of $\Gamma$ with the boundary of this neighborhood is $N|_{z_{n_3+1} = \{ x_q, \ldots, x_{i_{u+1}}\}}$ where $N \in H_*\Mp(n_1-(u+1), n_2, n_3+1, n_4, \ldots, n_{\ell_I+3})$.

For the fourth case, proceed as before. The intersection of $\Gamma$ with the boundary of this tubular neighborhood produces $N|_{z_{n_3+1} = \{ x_q, \ldots, ^mw_{j}\}}$ where $N \in H_*\Mp(n_1-(\alpha_m+1), n_2, n_3+1, n_4, \ldots, n_{m+3}-1 \ldots, n_{\ell_I+3})$.

The fifth case must be treated differently because various subspaces of this form are connected, making it impossible to find disjoint tubular neighborhoods. However, as before, we want to remove tubular neighborhoods of all of these, say of radius $r$. Let $\gamma_2^q$ be the intersection of $\Gamma$ with the boundary of the unions of these tubular neighborhoods. The radii of these tubular neighborhoods can be chosen sufficiently small so that we still have $g_0(\gamma_2^q) \leq f_I(1)+1$ and $g_1(\gamma_2^q) \leq f_I(a)$. Thus, conditions 1 and 2 still hold.
 
Let $A\subset \underline{n_2}$ with $|A| = f_I(a)+1$ and $\gamma_2^q \cap \tilde{A} \neq \emptyset$. Let $\bar{x} \in \gamma_2^q \cap \tilde{A}$. Then $\bar{x}$ comes from a point on $\Gamma$ with the $x$-coordinates and $y$-coordinates corresponding to a forbidden subspace perturbed slightly. We can choose $r$ sufficiently small so that $A \cap \{j_1, \ldots, j_c\} = \emptyset$. Let $j \in A$ and $\tilde{x}$ be the point on $\gamma_2^{q-1}$ corresponding to $\bar{x}$. If $\{i | d(y_j, x_i) \leq d^{(m_a, 0)}_{\gamma_2^{q-1}, A}(\tilde{x})\} \cap \{q, i_2, \ldots, i_b \} = \emptyset$, then $d^{(m_a, 0)}_{\gamma_2^{q}, A}(\bar{x}) = d^{(m_a, 0)}_{\gamma_2^{q-1}, A}(\tilde{x}) > \epsilon_2^{q-1}$. Thus, lets assume this is not the case. First note that each forbidden subspace that we removed tubular neighborhoods of in the fifth case involved at least $a$ $x$-coordinates and at least 1 $y$-coordinate. Thus, it comes from a point on $\gamma_2^{q-1}$ where at least $a-1$ $x$-coordinates and $1$ $y$-coordinate were equal. Also note that $a-1 \geq m_a$. Thus, at $\tilde{x}$, $d(y_j, x_{i_2}) > \epsilon_2^{q-1}$. The only coordinates that were changed were changed by at most $r$. Thus, $d_{\gamma_2^q, A}^{(m_a, 0)}(\bar{x}) > \epsilon_2^{q-1}-r$.

Now let $A\subset \underline{n_2}$ with $|A| = f_I(a)$ and $\gamma_2^q \cap \tilde{A} \neq \emptyset$. Let $\bar{x} \in \gamma_2^q \cap \tilde{A}$. Then $\bar{x}$ comes from a tubular neighborhood of a subspace where $x_q = x_{i_2} = \ldots = x_{i_{b}} = y_{j_1} = \ldots = y_{j_{c}}$. By choosing $r$ small enough, we can restrict to two cases: either $A$ is disjoint from $\{j_1, \ldots, j_c\}$ or $A = \{j_1, \ldots, j_c\}$. The first case follows in a very similar manner to the previous argument. In the second case, prior to the $x_q$-coordinate being equal to $y_{j_i}$, there were already $m$ $x$ coordinates equal to these $f_I(a)$ $y$ coordinates. Thus, the next closest $y$ coordinate had to be at least $\epsilon_2^{q-1}$ far away. Thus, in either case, $d_{\gamma_2^q, A}^{(m_a, 1)}(\bar{x}) > \epsilon_2^{q-1}-r$.

Again let $A\subset \underline{n_2}$ with $|A| = f_I(a)$ and $\gamma_2^q \cap \tilde{A} \neq \emptyset$. Let $\bar{x} \in \gamma_2^q \cap \tilde{A}$. As before, there are two cases: the $y$-coordinates in $A$ come from one of the forbidden subspaces or not. Suppose $j \in A$. In the first case, we have $d(y_j, x_q) \in [0, 2r)$. In the second case, we have $d(y_j, x_q) > \epsilon_2^{q-1}-r$. Because we're only changing coordinates other than $x_q$ by at most a distance of $r$ from $\gamma_2^{q-1}$, for all $i < q$, we have $d(y_j, x_i) \in [0, \epsilon_1^{q-1}+r) \cup (\epsilon_2^{q-1}-r, \infty)$. Thus, assuming $r$ has been chosen sufficiently small, for all $i \leq q$, we have $d(y_j, x_i) \in [0, \epsilon_1^{q-1}+r) \cup (\epsilon_2^{q-1}-r, \infty)$.

For $t=M$, we have a class $N \cdot x_q$ where $N \in H_*\Mp(n_1-1, n_2, n_3, \ldots, n_{\ell_I+3})$.

Thus, $\Gamma$ with its intersection with the above tubular neighborhoods removed allows us to write $[\gamma] = [\gamma_1^{q}] + [\gamma_2^q]$ where $[\gamma_1^q]$ is organized. Assuming $r$ has been chosen sufficiently small, $[\gamma_2^q]$ satisfies the above conditions. This proves the claim.
\end{proof}


\begin{lemma}
\label{minor_lemma_2}
Let $(n_1, n_2) \in \N^2$ with $n_1 > 0$. Suppose organized classes span $\Mp(\tilde{n}_1, \ldots, \tilde{n}_{\ell_I+3})$ whenever $(\tilde{n}_1, \tilde{n}_2) < (n_1, n_2)$. Let $a\in N$ be such that $f_I(a+1) < f_I(a)-1$ and $f_I(a-1) = f_I(a)$. Let $\gamma$ be a closed $s$-chain in $\Mp (n_1, \ldots, n_{\ell_I+3})$ such that $g_0(\gamma) \leq f_I(1)+1$, $g_1(\gamma) \leq f_I(a)$, and there exist $0 < \epsilon_1 < \epsilon_2$ as in Lemma \ref{minor_lemma_1}. Then $[\gamma] = [\gamma_1] + [\gamma_2]$ where $[\gamma_1]$ is organized and $\gamma_2$ satisfies one of the following:
\begin{itemize}
\item $[\gamma_2]=0$
\item $g_0(\gamma_2) \leq f_I(1)+1$ and $g_1(\gamma_2) \leq f_I(a+1)+1$
\end{itemize}
\end{lemma}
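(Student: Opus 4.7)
The plan is to exploit the distance control from Lemma~\ref{minor_lemma_1} and run a homotopy on $\gamma$ analogous to the one in the proof of Theorem~\ref{hom_proof_2}, but now extracting boundary terms that include diamond-type local classes. The key structural fact supplied by the input is that at any point of $\gamma$ lying on $\tilde{A}$ with $|A| = f_I(a)$, the $m_a$-closest $x$-coordinate is either within $\epsilon_1$ (a ``dense'' regime) or beyond $\epsilon_2$ (a ``sparse'' regime); crossings of forbidden subspaces along a homotopy are thus confined to a few predictable patterns, topologically separated by the gap $\epsilon_2 - \epsilon_1$.

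Concretely, I would pick a $y$-coordinate, say $y_1$, and move it via the linear homotopy $\gamma_t = \gamma + v\, t$ with $v$ non-zero only in $y_1$, until $t = M$ makes $y_1$ far from all other coordinates. The resulting $(s+1)$-chain $\Gamma$ may cross forbidden subspaces of the forms $y_1 = y_{j_2} = \cdots = y_{j_{f_I(1)+2}}$, $y_1 = z_j$, and $y_1 = x_{i_1} = \cdots = x_{i_k} = y_{j_2} = \cdots = y_{j_c}$ with $(k, c)$ on the boundary of $I$. After removing a sufficiently small tubular neighborhood of each such crossing, the distance separation from Lemma~\ref{minor_lemma_1} lets me identify the resulting contribution as $N|_{z = L}$, where $L$ is an augmented local class and $N \in H_* \Mp(\tilde{n})$ with $(\tilde{n}_1, \tilde{n}_2) < (n_1, n_2)$. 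In the dense regime with $|A| = f_I(a)$, the crossing produces the diamond local class $L = \{x_1, \ldots, x_{a+1}, \{y_1, \ldots, y_{f_I(a)+1}\}\}$; that this is a bona fide augmented local class uses $(a, f_I(a)) \in \D$, which follows from the hypotheses $f_I(a-1) = f_I(a)$ and $f_I(a+1) < f_I(a) - 1$. The sparse regime and the cases where $|A|$ is smaller produce critical or diamond local classes of lower weight. By the hypothesis that organized classes span $H_* \Mp(\tilde{n}_1, \ldots, \tilde{n}_{\ell_I+3})$ for smaller $(\tilde{n}_1, \tilde{n}_2)$, each such $N$ is organized, so every boundary contribution is organized.

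What remains after removing these tubular neighborhoods is a chain $\gamma_2$ in which no $f_I(a)$-cluster of $y$'s coincides with any $x$-coordinate, so $g_1(\gamma_2) < f_I(a)$. The ideal constraint together with $f_I(a+1) < f_I(a) - 1$ then forces the next admissible value of $g_1$ strictly below $f_I(a)$ to be at most $f_I(a+1) + 1$, yielding $g_1(\gamma_2) \le f_I(a+1) + 1$; meanwhile $g_0(\gamma_2) \le f_I(1) + 1$ is preserved because any new $y$-cluster of size exceeding $f_I(1) + 1$ created along the homotopy is immediately peeled off into an organized boundary term. The main obstacle is verifying the factorization of each tubular-neighborhood contribution as $N|_{z = L}$: this requires using the precise distance separation of Lemma~\ref{minor_lemma_1} to show that near each crossing the chain is, up to controlled perturbation, a genuine product of a local class with a chain on fewer coordinates, so that the inductive hypothesis on $(n_1, n_2)$ legitimately applies.
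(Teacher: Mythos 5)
Your overall strategy (push a $y$-coordinate off to infinity, excise tubular neighborhoods of the forbidden crossings, identify the link contributions as organized classes using the $\epsilon_1/\epsilon_2$ dichotomy, and note that $(a,f_I(a))\in\mathcal{D}_I$ follows from $f_I(a-1)=f_I(a)$ and $f_I(a+1)<f_I(a)-1$) is the right one, but two essential steps are missing or wrong. First, moving a single coordinate $y_1$ does not give the claimed conclusion: it only eliminates clusters containing $y_1$, so it is false that afterwards ``no $f_I(a)$-cluster of $y$'s coincides with any $x$-coordinate.'' The paper's proof is an induction over \emph{all} $y$-coordinates $q=1,\ldots,n_2$, and at each stage it must carry along both the distance conditions of Lemma \ref{minor_lemma_1} (with radii shrunk so that $\epsilon_1^q=\epsilon_1^{q-1}+r$, $\epsilon_2^q=\epsilon_2^{q-1}-r$ still work) and the new invariant that no $x_i$ equals $y_{\tilde q}$ together with $f_I(a+1)+1$ further $y$'s for every already-processed $\tilde q$; only after the full sweep does this invariant yield $g_1\le f_I(a+1)+1$. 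Your substitute argument --- that $g_1<f_I(a)$ plus ``the ideal constraint'' forces $g_1\le f_I(a+1)+1$ --- is not valid: configurations with $x_i=y_{j_1}=\cdots=y_{j_k}$ for $f_I(a+1)+1<k<f_I(a)$ lie in allowed subspaces (since $f_I(1)\ge f_I(a)-1$), so nothing geometric excludes intermediate values of $g_1$; the bound comes solely from the maintained pointwise invariant.

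Second, the identification of the key crossing is not as you state it. When $y_q$ meets $f_I(a)$ other $y$'s together with between $m_a$ and $a$ nearby $x$'s, the relevant forbidden subspaces (for the various index sets and various numbers $b$ of $x$'s) intersect one another, so they do not admit disjoint tubular neighborhoods, and the contribution cannot be factored directly as $N|_{z=L}$ with $L=\{x_1,\ldots,x_{a+1},\{y_1,\ldots,y_{f_I(a)+1}\}\}$ --- the set of participating $x$'s is not constant along the crossing locus. This is exactly why the auxiliary colors were built into $I'$: the paper excises a neighborhood of the set $T$ cut out by the $y$-equalities together with the condition that the $m_a$-th nearest $x$ lies within $\epsilon_2$, uses the Lemma \ref{minor_lemma_1} conditions to see that $\gamma_2^{q-1}\cap T=\emptyset$ and $\Gamma\cap\partial T=\emptyset$, and identifies the boundary contribution as $N|_{{}^i w_{n_{i+3}+1}=\{y_q,y_{j_2},\ldots,y_{j_{f_I(a)+1}}\}}$, i.e.\ the collapsed $y$-cluster is replaced by a ${}^i w$ coordinate with its restricted interactions; this class is then either null-homologous or organized according to whether the new ${}^i w$ is bound into a bracket $\{x_{i_1},\ldots,x_{i_{a+1}},{}^i w\}$. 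Your ``dense regime'' remark gestures at the dichotomy but supplies neither the excision of $T$ nor the $w$-substitution, and without them the claimed factorization and the organized/null dichotomy do not go through. (You also omit the crossings $y_q={}^m w_j$, a smaller issue.)
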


\begin{proof}
If $n_2 = 0$, then $g_1(\gamma) \leq f_I(a+1)$ and the claim holds. Thus, suppose $n_2 > 0$. We will show by induction that for all $q \leq n_2$, $[\gamma] = [\gamma_1^q] + [\gamma_2^q]$ where $[\gamma_1^q]$ is organized and $\gamma_2^q$ satisfies one of the following:

\begin{itemize}
\item $[\gamma_2^q]=0$
\item there exists $0 < \epsilon_1^q < \epsilon_2^q$ such that $\gamma_2^q$ satisfies the following
\begin{itemize}
\item $g_0(\gamma_2^q) \leq f_I(1)+1$
\item $g_1(\gamma_2^q) \leq f_I(a)$
\item for any $A$ with $|A| = f_I(a)+1$ and any $\bar{x} \in \gamma_2^q \cap \tilde{A}, d_{\gamma_2^q, A}^{(m_a, 0)}(\bar{x}) > \epsilon_2^q$
\item for any $A$ with $|A| = f_I(a)$ and any $\bar{x} \in \gamma_2^q \cap \tilde{A}, d(y_j, x_i) \in [0, \epsilon_1^q) \cup (\epsilon_2^q, \infty)$ for all $i \leq q$ and $j \in A$.
\item for all $\tilde{q} \leq q$, there does not exist $i$ and distinct $j_2, \ldots, j_{f_I(a+1)+2}$ such that $\gamma_2^q \cap \{x_i = y_{\tilde{q}} = y_{j_2} = \ldots = y_{j_{f_I(a+1)+2}} \} \neq \emptyset$
\end{itemize}
\end{itemize}

For $q=0$, the claim is assumed. Thus, suppose $0 < q \leq n_2$ and the claim holds for all $\tilde{q} < q$. Consider the homotopy of $\gamma_2^{q-1}$ affecting only the $y_{q}$ coordinate, $\gamma_t = \gamma_2^{q-1} + v_q \cdot t$ where $v_q$ is a vector that is non-zero only in the $y_q$ coordinate. For large enough $t$, say $t=M$, the $y_q$ coordinate is always far away from all other points. Call the $(s+1)$-chain given by this homotopy $\Gamma$. $\Gamma$ may not be a chain in $\Mp(n_1, \ldots, n_{\ell_I+3})$. It may intersect forbidden subspaces of the forms:

\begin{center}
$y_q = y_{j_2} = \ldots = y_{j_{f_I(0)+1}}$ (if $f_I(0) = f_I(1)+1$)\\
$y_q = z_j$ \\
$y_q = $$^mw_j$\\
$y_q = x_{i_1} = \ldots = x_{i_{b}} = y_{j_2} = \ldots = y_{j_{f_I(a)+1}}$ where $m_a \leq b \leq a$\\
$y_q = x_{i_1} = \ldots = x_{i_{b}} = y_{j_2} = \ldots = y_{j_c}$ where $b\geq a+1, 1\leq c \leq f_I(a+1)+1$\\
\end{center}

In the first case, remove a sufficiently small tubular neighborhood. The intersection of $\Gamma$ with the boundary of this neighborhood is $N|_{z_{n_3+1} = \{ y_k, y_{j_2}, \ldots, y_{j_{f_I(0)+1}} \}}$ where $N \in H_*\Mp(n_1, n_2-(f_I(0)+1), n_3+1, \ldots, n_{\ell_I+3})$.

In the second case, again remove a sufficiently small tubular neighborhood. The intersection of $\Gamma$ with the boundary of this neighborhood is $N|_{z_{j} = [ y_k, z_j]}$ where $N \in H_*\Mp(n_1, n_2-1, n_3, \ldots, n_{\ell_I+3})$.

For the third case, proceed as before. This intersection of $\Gamma$ with the boundary of this tubular neighborhood produces a class $N|_{z_{n_3+1} = [ y_k, ^mw_{j}]}$ where $N \in H_*\Mp(n_1, n_2-1, n_3+1, n_4, \ldots, n_{m+3}-1, \ldots, n_{\ell_I+3})$.

For the fourth case, remove a small tubular neighborhood of the set $T=\{ y_q = y_{i_2} = \ldots = y_{i_{f_I(a)+1}}: m_a^{th}$ closest $x$ is $< \epsilon^{q-1}_2$ away $\}$. Clearly the intersection in question lives within this set. Also, by the third condition for $\gamma_2^{q-1}, T \cap \gamma_2^{q-1} = \emptyset$. Additionally, by the fourth condition for $\gamma_2^{q-1}, \Gamma \cap \partial T = \emptyset$. Thus, the intersection of $\gamma_2^{q-1}$ and the boundary of the tubular neighborhood of $T$ is a class $N|_{^iw_{n_{i+3}+1} = \{y_q, y_{j_2}, \ldots, y_{j_{f_I(a)+1}} \}}$ where $N \in H_*\Mp(n_1, n_2-(f_I(a+1)+1), n_3,\ldots, n_{i+3}+1, \ldots, n_{\ell_I+3})$. In the case that $a \neq 0$ and this $^iw_{n_{i+3}+1}$ is not in some $\{ x_{i_1}, \ldots, x_{i_{a+1}},$$^iw_{n_{i+3}+1}\}$, then this class is null homologous. Otherwise, this class is organized.

For the fifth case, remove tubular neighborhoods of all of these, say of radius $r$. Let $\gamma_2^q$ be the intersection of $\Gamma$ with the boundary of the unions of these tubular neighborhoods. The radii of these tubular neighborhoods can be made sufficiently small so $g_0(\gamma_2^q) \leq f_I(1)+1$ and $g_1(\gamma_2^q) \leq f_I(a)$. Since for this case we have $c \leq f_I(a+1) + 1 < f_I(a)$, we can choose $r$ small enough so that conditions three and four hold for $\epsilon_1^q = \epsilon_1^{q-1} + r$ and $\epsilon_2^q = \epsilon_2^{q-1} - r$. Furthermore, it can be made small enough so that there does not exist $i$ and distinct $j_2, \ldots, j_{f_I(a+1)+1}$ such that $\gamma_2^q \cap \{x_{i} = y_{q} = y_{j_2} = \ldots = y_{j_{f_I(a+1)+1}} \} \neq \emptyset$ and that this property still holds for all $\tilde{q} < q$.

For $t=M$, we have a class $N \cdot y_q$ where $N \in H_*\Mp(n_1, n_2-1, n_3, \ldots, n_{\ell_I+3})$.

Thus, $\Gamma$ with its intersection with the above tubular neighborhoods removed allows us to write $[\gamma] = [\gamma_1^q] + [\gamma_2^q]$ where $[\gamma_1]$ is organized and $[\gamma_2^q]$ satisfies the required conditions. Thus, the claim holds for all $q \leq n_2$. The lemma is the case where $q = n_2$.

\end{proof}


\begin{lemma}
\label{minor_lemma_3}
Let $(n_1, n_2) \in \N^2$ with $n_1 > 0$. Suppose organized classes span $\Mp(\tilde{n}_1, \ldots, \tilde{n}_{\ell_I+3})$ for all $(\tilde{n}_1, \tilde{n}_2) < (n_1, n_2)$. Let $a\in N$ be such that $f_I(a+1) < f_I(a)$ and $f_I(a-1) \neq f_I(a)$. Let $\gamma$ be a closed $s$-chain in $\Mp (n_1, \ldots, n_{\ell_I+3})$ such that $g_0(\gamma) \leq f_I(1)+1$ and $g_1(\gamma) \leq f_I(a)$. Then $[\gamma] = [\gamma_1] + [\gamma_2]$ where $[\gamma_1]$ is organized $\gamma_2$ satisfies one the following:
\begin{itemize}
\item $[\gamma_2]=0$
\item $g_0(\gamma_2) \leq f_I(1)+1$ and $g_1(\gamma_2) \leq f_I(a+1)+1$
\end{itemize}
\end{lemma}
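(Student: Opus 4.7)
The plan is to mimic the inductive argument of Lemma \ref{minor_lemma_2}, exploiting the fact that the hypothesis $f_I(a-1) \neq f_I(a)$, combined with $f_I(a+1) < f_I(a)$, forces $m_a = a$ and puts the relevant critical subspace $(a, f_I(a)+1)$ into $\C$ rather than producing a $\D$-type diamond class. This is precisely what lets us dispense with the $\epsilon_1, \epsilon_2$ apparatus needed in Lemma \ref{minor_lemma_2}.

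I would induct on $q$ from $0$ to $n_2$, asserting at stage $q$ that $[\gamma] = [\gamma_1^q] + [\gamma_2^q]$ with $\gamma_1^q$ organized and $\gamma_2^q$ a closed $s$-chain satisfying (a) $g_0(\gamma_2^q) \leq f_I(1)+1$, (b) $g_1(\gamma_2^q) \leq f_I(a)$, and (c) for each $\tilde q \leq q$ and each $i$, $\gamma_2^q$ is disjoint from the subspace $\{x_i = y_{\tilde q} = y_{j_2} = \cdots = y_{j_{f_I(a+1)+2}}\}$. At $q = n_2$ this last condition forces $g_1(\gamma_2^{n_2}) \leq f_I(a+1)+1$, which is the desired conclusion of the lemma.

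For the inductive step I would perform the homotopy $\gamma_t = \gamma_2^{q-1} + v_q \cdot t$ pushing $y_q$ off to infinity, producing an $(s+1)$-chain $\Gamma$. Since only $y_q$ moves, $\Gamma$ can intersect a forbidden subspace only if its ``$y_q$-deleted'' slice is realized on $\gamma_2^{q-1}$; a short case analysis based on (a) and (b) shows these are exactly (i) a pure-$y$ coincidence of size $f_I(0)+1$ when $f_I(0) = f_I(1)+1$, (ii) $y_q = z_j$, (iii) $y_q = {}^m w_j$, (iv) $y_q = x_{i_1} = \cdots = x_{i_a} = y_{j_2} = \cdots = y_{j_{f_I(a)+1}}$, and (v) $y_q = x_{i_1} = \cdots = x_{i_b} = y_{j_2} = \cdots = y_{j_{f_I(b)+1}}$ for some $b \geq a+1$. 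For (i)--(iii) a small tubular neighborhood contributes the usual organized boundary (critical class via a new $z$-coordinate, or a Lie bracket). For (iv), the hypothesis $f_I(a-1) > f_I(a)$ gives $(a-1, f_I(a)+1) \in I$ together with $(a, f_I(a)) \in I$, so $(a, f_I(a)+1) \in \C$ and the boundary contribution $N|_{z_{n_3+1} = \{x_{i_1}, \ldots, x_{i_a}, y_q, y_{j_2}, \ldots, y_{j_{f_I(a)+1}}\}}$ is an augmented local class, hence organized. For (v) with $b = a+1$, the hypothesis $f_I(a+1) < f_I(a)$ similarly places $(a+1, f_I(a+1)+1)$ in $\C$, producing an organized contribution. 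For (v) with $b > a+1$, the subspace need not be critical, so I would instead define $\gamma_2^q$ to be the intersection of $\Gamma$ with the boundaries of these tubular neighborhoods; choosing the radii $r$ small enough will ensure (a) and (b), and automatically yields (c) at $\tilde q = q$, because on this boundary $y_q$ is only near at most $f_I(b) \leq f_I(a+1)$ other $y$'s. Finally, the $t = M$ term $N \cdot y_q$ with $N \in H_* \Mp(n_1, n_2 - 1, \ldots)$ is organized by the outer induction hypothesis on $(\tilde n_1, \tilde n_2) < (n_1, n_2)$. Summing all contributions closes the inductive step.

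The main technical step will be the choice of $r$: it must be small enough that the perturbation preserves (c) at all earlier $\tilde q$ simultaneously, without increasing $g_0$ or $g_1$. This is essentially the same bookkeeping carried out in Lemma \ref{minor_lemma_2}, but is noticeably simpler here because no diamond subspace ever arises---which is why the $\epsilon_1, \epsilon_2$ control needed there is absent from both the hypothesis and the conclusion.
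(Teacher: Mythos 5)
Your architecture is the paper's own proof: the same induction on $q \le n_2$ with the same three conditions on $\gamma_2^q$, the same homotopy pushing $y_q$ to infinity, the same treatment of the $y$-only, $z$, and ${}^m w$ crossings, the same identification of the $b=a$ crossings with the critical substitution $N|_{z_{n_3+1}=\{x_{i_1},\ldots,x_{i_a},y_q,y_{j_2},\ldots,y_{j_{f_I(a)+1}}\}}$ (this is precisely where $f_I(a-1)\neq f_I(a)$ enters, as you note), and the same conclusion at $q=n_2$.

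The one step that does not go through as written is your separate extraction of the $b=a+1$ crossings as an organized contribution. The substitution form for the boundary of a tubular neighborhood of $\{y_q=x_{i_1}=\cdots=x_{i_{a+1}}=y_{j_2}=\cdots=y_{j_{f_I(a+1)+1}}\}$ requires $\Gamma$ to meet this subspace only along its generic stratum, and the hypotheses do not guarantee that: if $f_I(a+2)=f_I(a+1)$ (which is allowed), a point of $\gamma_2^{q-1}$ may have $a+2$ $x$-coordinates equal to the same $f_I(a+1)$ $y$-coordinates --- legal, and consistent with $g_1\le f_I(a)$ --- and as $y_q$ sweeps through it, $\Gamma$ meets the $b=a+1$ subspace inside the deeper stratum with $a+2$ coincident $x$'s, where the link is not the product giving $N|_{z_{n_3+1}=\{\ldots\}}$. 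Contrast this with $b=a$, where the analogous deeper strata ($a+1$ $x$'s with $f_I(a)$ $y$'s, or $a$ $x$'s with $f_I(a)+1$ $y$'s) are forbidden for the underlying chain exactly because $f_I(a+1)<f_I(a)$, so those crossings really are isolated and critical. The repair is simply not to extract $b=a+1$: lump it with $b>a+1$, remove neighborhoods of all these subspaces, and let their boundary intersection be $\gamma_2^q$, which is what the paper does; conditions (a)--(c) still hold for small radii, and (c) at $q=n_2$ is all you need for $g_1(\gamma_2)\le f_I(a+1)+1$. A smaller point: your justification of (c) on the boundary chain (``$y_q$ is only near at most $f_I(b)\le f_I(a+1)$ other $y$'s'') overlooks that $y_q$ could exactly coincide with a different, legal cluster containing some $x_i$ and $f_I(a+1)+1$ $y$'s while lying on that boundary; this is excluded by a compactness argument (such a point within distance $r$ of a cluster with $b\ge a+1$ $x$'s would, as $r\to 0$, force a forbidden coincidence of at least $a+1$ $x$'s with at least $f_I(a+1)+1$ $y$'s on $\gamma_2^{q-1}$), which is the content of the paper's ``radii can be chosen sufficiently small'' assertion and should be said explicitly.
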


\begin{proof}
As in the proof of the previous lemma, if $n_2=0$, the claim holds. Thus, suppose $n_2 > 0$. We will show by induction that for all $q \leq n_2$, $[\gamma] = [\gamma_1^q] + [\gamma_2^q]$ where $[\gamma_1^q]$ is organized and $\gamma_2^q$ satisfies one of the following:
\begin{itemize}
\item $[\gamma_2^q]=0$
\item $g_0(\gamma_2^q) \leq f_I(1)+1$, $g_1(\gamma_2^q) \leq f_I(a)$, and for all $\tilde{q} \leq q$, there does not exist $i$ and distinct $j_2, \ldots, j_{f_I(a+1)+2}$ such that $\gamma_2^q \cap \{x_i = y_{\tilde{q}} = y_{j_2} = \ldots = y_{j_{f_I(a+1)+2}} \} \neq \emptyset$
\end{itemize}

For $q=0$, the claim is assumed. Thus, suppose $0 < q \leq n_2$ and the claim holds for all $\tilde{q} < q$. Consider the homotopy of $\gamma_2^{q-1}$ affecting only the $y_{q}$ coordinate, $\gamma_t = \gamma_2^{q-1} + v_q \cdot t$ where $v_q$ is a vector that is non-zero only in the $y_q$ coordinate. For large enough $t$, say $t=M$, the $y_q$ coordinate is always far away from all other points. Call the $(s+1)$-chain given by this homotopy $\Gamma$. $\Gamma$ may not be a chain in $\Mp(n_1, \ldots, n_{\ell_I+3})$. It may intersect forbidden subspaces of the forms:

\begin{center}
$y_q = y_{j_2} = \ldots = y_{f_I(0)+1}$ (if $f_I(0) = f_I(1)+1$)\\
$y_q = z_j$ \\
$y_q = $$^mw_j$\\
$y_q = x_{i_1} = \ldots = x_{i_{a}} = y_{j_2} = \ldots = y_{j_{f_I(a)+1}}$\\
$y_q = x_{i_1} = \ldots = x_{i_{b}} = y_{j_2} = \ldots = y_{j_c}$ where $b\geq a+1, 1\leq c \leq f_I(a+1)+1$\\
\end{center}

In the first case, remove a sufficiently small tubular neighborhood. The intersection of $\Gamma$ with the boundary of this neighborhood is $N|_{z_{n_3+1} = \{ y_q, y_{j_2}, \ldots, y_{f_I(0)+1} \}}$ where $N \in H_*\Mp(n_1, n_2-(f_I(0)+1), n_3+1, \ldots, n_{\ell_I+3})$.

In the second case, again remove a sufficiently small tubular neighborhood. The intersection of $\Gamma$ with the boundary of this neighborhood is $N|_{z_{j} = [ y_q, z_j]}$ where $N \in H_*\Mp(n_1, n_2-1, n_3, \ldots, n_{\ell_I+3})$.

For the third case, proceed as before. The intersection of $\Gamma$ with the boundary of this tubular neighborhood produces $N|_{z_{n_3+1} = [ y_q, ^mw_{j}]}$ where $N \in H_*\Mp(n_1, n_2-1, n_3+1, n_4, \ldots, n_{m+3}-1, \ldots, n_{\ell_I+3})$.

For the fourth case, proceed as before. We get a class $N|_{z_{n_3+1} = \{ y_q, x_1, \ldots, y_{j_{f_I(a+1)+1}}\}}$ where $N \in H_*\Mp(n_1-a, n_2-f_I(a+1)-1, n_3+1, n_4, \ldots, n_{\ell_I+3})$.

For the fifth case, remove tubular neighborhoods of all of these, say of radius $r$. Let $\gamma_2^q$ be the intersection of $\Gamma$ with the boundary of the unions of these tubular neighborhoods. The radii of these tubular neighborhoods can be made sufficiently small so that $g_0(\gamma_2^q) \leq f_I(1)+1$ and $g_1(\gamma_2^q) \leq f_I(a)$. Furthermore, they can be chosen small enough so that there does not exist $i$ and distinct $j_2, \ldots, j_{f_I(a+1)+2}$ such that $\gamma_2^q \cap \{x_{i} = y_{q} = y_{j_2} = \ldots = y_{j_{f_I(a+1)+2}} \} \neq \emptyset$. They can also be chosen small enough to ensure this property still holds for all $\tilde{q} < q$.

For $t=M$, we have a class $N \cdot y_q$ where $N \in H_*\Mp(n_1, n_2-1, n_3, \ldots, n_{\ell_I+3})$.

Thus, $\Gamma$ with its intersection with the above tubular neighborhoods removed allows us to write $[\gamma] = [\gamma_1^q] + [\gamma_2^q]$ where $[\gamma_1^q]$ is organized and $[\gamma_2^q]$ satisfies the above conditions. Thus, the claim holds for all $q \leq n_2$. The lemma is the case where $q = n_2$.

\end{proof}


\begin{lemma}
\label{minor_lemma_4}
Let $(n_1, n_2) \in \N^2$ with $n_1 > 0$. Suppose organized classes span $\Mp(\tilde{n}_1, \ldots, \tilde{n}_{\ell_I+3})$ for all $(\tilde{n}_1, \tilde{n}_2) < (n_1, n_2)$. Let $a\in N$. Let $\gamma$ be a closed $s$-chain in $\Mp (n_1, \ldots, n_{\ell_I+3})$ such that $g_0(\gamma) \leq f_I(1)+1$ and $g_1(\gamma) \leq f_I(a+1)+1$. Then $[\gamma] = [\gamma_1] + [\gamma_2]$ where $ [\gamma_1]$ is organized and $\gamma_2$ satisfies one the following:
\begin{itemize}
\item $[\gamma_2]=0$
\item $g_0(\gamma_2) \leq f_I(1)+1$ and $g_1(\gamma_2) \leq f_I(a+1)$
\end{itemize}
\end{lemma}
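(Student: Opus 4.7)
The plan is to mimic the template of Lemma \ref{minor_lemma_3} almost verbatim, inducting on $q \in \{0, 1, \ldots, n_2\}$ to produce at each stage a decomposition $[\gamma] = [\gamma_1^q] + [\gamma_2^q]$ with $[\gamma_1^q]$ organized, and with $\gamma_2^q$ satisfying $g_0(\gamma_2^q) \leq f_I(1)+1$, $g_1(\gamma_2^q) \leq f_I(a+1)+1$, plus the key \emph{absence} condition: for every $\tilde{q} \leq q$ there is no $i$ and distinct $j_2, \ldots, j_{f_I(a+1)+1}$ with $\gamma_2^q$ meeting $\{x_i = y_{\tilde{q}} = y_{j_2} = \cdots = y_{j_{f_I(a+1)+1}}\}$. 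Taking $q = n_2$ and combining the absence condition over all $\tilde{q}$ yields $g_1(\gamma_2^{n_2}) \leq f_I(a+1)$, which is the conclusion. The base case $q = 0$ is simply the hypothesis (if $n_2 = 0$ the hypothesis already gives $g_1(\gamma) \leq f_I(a+1)$, so nothing is to be done).

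For the inductive step I would use the straight-line homotopy $\gamma_t = \gamma_2^{q-1} + t v_q$ with $v_q$ supported only in the $y_q$ coordinate, producing for large $t = M$ an $(s+1)$-chain $\Gamma$ whose terminal slice pushes $y_q$ far from all other points. The forbidden subspaces that $\Gamma$ can hit fall into the usual families analogous to Lemma \ref{minor_lemma_3}: $y_q$ collapsing onto too many other $y$'s ($(0, f_I(0)+1)$-type), onto a $z_j$ or a ${}^m w_j$, or onto a set of $b$ coordinates of color $1$ and $c$ of color $2$ with $(b, c+1) \notin I$ but $(b, c) \in I$. Around each such actually-forbidden subspace I excise a small tubular neighborhood; its boundary intersection with $\Gamma$ presents as $N\big|_{z_{n_3+1} = \{y_q, \ldots\}}$, $N\big|_{z_j = [y_q, z_j]}$, or an analogous $w$-substitution, where $N$ lives in $\Mp$ with strictly smaller $(n_1, n_2)$. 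By the outer induction hypothesis each such $N$, and hence each of these boundary slices, is organized; the terminal $t = M$ slice contributes $N \cdot y_q$, likewise organized.

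In addition, following the fifth case of Lemma \ref{minor_lemma_3}, I would excise a small tubular neighborhood of every subspace of the form $\{y_q = x_i = y_{j_2} = \cdots = y_{j_{f_I(a+1)+1}}\}$ that $\Gamma$ meets. These are \emph{not} forbidden in $\Mp$ (they lie in $\Mp$ because $(1, f_I(a+1)+1) \in I$ under the ambient assumptions), so they generate no additional organized boundary contributions; their purpose is simply to push $\gamma_2^q$ off them, thereby establishing the absence condition at $\tilde{q} = q$. Setting $\gamma_2^q$ to be $\Gamma$ minus every excised tube, a sufficiently small uniform radius $r$ (chosen by compactness of $\gamma$ in the style of Lemma \ref{minor_lemma_1}) will preserve the $g_0$ and $g_1$ bounds and will ensure the previously established absence conditions for $\tilde{q} < q$ are not destroyed.

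The main obstacle will be the quantitative radius bookkeeping: verifying that a single $r$ can be chosen small enough to simultaneously (a) prevent the perturbation from creating any new $x_i = y_{q'} = y_{j_2} = \cdots = y_{j_{f_I(a+1)+1}}$ configuration for $q' \leq q$, (b) avoid introducing a $(0, f_I(1)+2)$ collision that would violate $g_0$, and (c) keep $g_1 \leq f_I(a+1)+1$. This is exactly the compactness argument already used in the proof of Lemma \ref{minor_lemma_1}, so it should go through without essentially new ideas.
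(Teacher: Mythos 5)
Your overall skeleton (induct, push one coordinate to infinity, excise tubes around the strata $\Gamma$ hits, let the leftover be the new chain, record an absence condition whose conjunction over all indices gives $g_1 \leq f_I(a+1)$) is the right template, and your final counting is fine. But there is a genuine gap in the inductive step, and it comes from your choice to move the $y_q$ coordinates (induction over $q \leq n_2$, modelled on Lemma \ref{minor_lemma_3}) rather than the $x_q$ coordinates. When a moving $y$ collides with a cluster of $b$ points of color $1$ and $c$ points of color $2$ with $(b,c) \in I$ but $(b,c+1) \notin I$, the boundary of the excised tube is $N$ with a sphere $\{x_{i_1}, \ldots, x_{i_b}, y_q, y_{j_2}, \ldots, y_{j_{c+1}}\}$ substituted in; this is an organized class only if that sphere is an augmented local class, i.e.\ only if $(b, c+1)$ is critical, which additionally requires $(b-1, c+1) \in I$, i.e.\ $f_I(b-1) > f_I(b)$. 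Nothing in the hypotheses of Lemma \ref{minor_lemma_4} rules out a plateau $f_I(b-1) = f_I(b)$ with $f_I(b) \leq f_I(a+1)$, and in that case the collision is reachable (it is compatible with $g_1 \leq f_I(a+1)+1$), the stratum is forbidden, and the linking class is exactly the $\D$-type phenomenon that is \emph{not} expressible through $\C$-type spheres: handling it is what forces the auxiliary ${}^i w$ colors, the product-of-spheres classes, and the $\epsilon_1 < \epsilon_2$ metric stratification of Lemmas \ref{minor_lemma_1} and \ref{minor_lemma_2}. This is also why Lemma \ref{minor_lemma_3}, the $y$-moving lemma you are imitating, carries the extra hypotheses $f_I(a+1) < f_I(a)$ and $f_I(a-1) \neq f_I(a)$, which Lemma \ref{minor_lemma_4} does not have. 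So your assertion that every excision around an actually forbidden stratum ``presents as $N|_{z_{n_3+1} = \{y_q, \ldots\}}$ \ldots\ and hence is organized'' is unjustified and false in general.

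The paper's proof sidesteps this entirely by inducting over $q \leq n_1$ and moving the $x_q$ coordinates, with the absence condition ``$x_{\tilde q}$ is never equal to $f_I(a+1)+1$ distinct $y$'s.'' With an $x$ moving, the only reachable forbidden strata whose linking spheres could fail to be critical are clusters with at least $a+2$ points of color $1$ and at most $f_I(a+1)$ points of color $2$; these are not converted into organized pieces at all, but are precisely the strata whose tube boundaries are declared to be the new chain $\gamma_2^q$, which is consistent with the absence condition because near such clusters $x_q$ is not close to $f_I(a+1)+1$ distinct $y$'s. Two smaller problems with your write-up: the claim that $\{y_q = x_i = y_{j_2} = \cdots = y_{j_{f_I(a+1)+1}}\}$ is never forbidden because $(1, f_I(a+1)+1) \in I$ fails for instance when $a = 0$ and $f_I(1) = f_I(0) - 1$; and ``setting $\gamma_2^q$ to be $\Gamma$ minus every excised tube'' is an $(s+1)$-chain, whereas the new chain must be the $s$-dimensional intersection of $\Gamma$ with the boundaries of those tubes (removing a tube around a non-forbidden stratum does produce a boundary contribution, which is exactly the piece you keep). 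The first of these is fixable, the second is wording, but the plateau issue is a real missing idea: as written, your induction cannot close without either adding the hypotheses of Lemma \ref{minor_lemma_3} (which the statement does not grant) or switching to the paper's $x$-moving scheme.
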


\begin{proof}

If $n_2 = 0$, then $g_1(\gamma) \leq f_I(a+1)$ and the claim holds. Thus, suppose $n_2 > 0$. We will show by induction that for all $q \leq n_1$, $[\gamma] =[\gamma_1^q] + [\gamma_2^q]$ where $[\gamma_1^q]$ is organized and $\gamma_2^q$ satisfies one the following:
\begin{itemize}
\item $[\gamma_2^q]=0$
\item $g_0(\gamma_2^q) \leq f_I(1)+1$, $g_1(\gamma_2^q) \leq f_(a+1)+1$, and for all $\tilde{q} \leq q$, there does not exist distinct $j_1, \ldots, j_{f_I(a+1)+1}$ such that $\gamma_2^q \cap \{x_{\tilde{q}} = y_{j_1} = \ldots = y_{j_{f_I(a+1)+1}} \} \neq \emptyset$
\end{itemize}

For $q=0$, the claim is assumed. Thus, suppose $0 < q \leq n_1$ and the claim holds for all $\tilde{q} < q$. Consider the homotopy of $\gamma_2^{q-1}$ affecting only the $x_{q}$ coordinate, $\gamma_t = \gamma_2^{q-1} + v_q \cdot t$ where $v_q$ is a vector that is non-zero only in the $x_q$ coordinate. For large enough $t$, say $t=M$, the $x_q$ coordinate is always far away from all other points. Call the $(s+1)$-chain given by this homotopy $\Gamma$. $\Gamma$ may not be a chain in $\Mp(n_1, \ldots, n_{\ell_I+3})$. It may intersect forbidden subspaces of the forms:

\begin{center}
$x_q = y_{j_1} = \ldots = y_{j_{f_I(1)+1}}$\\
$x_q = z_j$ \\
$x_q = x_{i_2} = \ldots = x_{i_{u+1}}$\\
$x_q = x_{i_2} = \ldots = x_{i_{\alpha_m+1}} =$ $^mw_j$\\
$x_q = x_{i_2} = \ldots = x_{i_{a+1}} = y_{j_1} = \ldots = y_{j_{f_I(a+1)+1}}$\\
$x_q = x_{i_2} = \ldots = x_{i_{b}} = y_{j_1} = \ldots = y_{j_c}$ where $b>a+1, 1\leq c \leq f_I(a+1)$\\
\end{center}

In the first case, remove a sufficiently small tubular neighborhood. The intersection of $\Gamma$ with the boundary of this neighborhood is $N|_{z_{n_3+1} = \{ x_q, y_{j_1}, \ldots, y_{j_{f_I(1)+1}} \}}$ where $N \in H_*\Mp(n_1-1, n_2-(f_I(1)+1), n_3+1, \ldots, n_{\ell_I+3})$.

In the second case, again remove a sufficiently small tubular neighborhood. The intersection of $\Gamma$ with the boundary of this neighborhood is $N|_{z_{j} = [ x_q, z_j]}$ where $N \in H_*\Mp(n_1-1, n_2, n_3, \ldots, n_{\ell_I+3})$.

The third case only occurs if $(u, 0) \notin I$. If so, proceed as in the two preceding cases. Remove a small tubular neighborhood. The intersection of $\Gamma$ with the boundary of this neighborhood is $N_{z_{n_3+1} = \{ x_q, \ldots, x_{i_{u+1}}\}}$ where $N \in H_*\Mp(n_1-(u+1), n_2, n_3+1, n_4, \ldots, n_{\ell_I+3})$.

For the fourth case, proceed as before. The intersection of $\Gamma$ with the tubular neighborhood produces a class $N|_{z_{n_3+1} = \{ x_q, \ldots, ^mw_{j}\}}$ where $N \in H_*\Mp(n_1-(\alpha_m+1), n_2, n_3+1, n_4, \ldots, n_{m+3}-1, \ldots, n_{\ell_I+3})$.

For the fifth case, proceed as before. The intersection of $\Gamma$ with the tubular neighborhood produces a class $N|_{z_{n_3+1} = \{ x_q, \ldots, y_{j_{f_I(a+1)+1}}\}}$ where $N \in H_*\Mp(n_1-(a+1), n_2-(f_I(a+1)+1), n_3+1, n_4, \ldots, n_{\ell_I+3})$.

For the sixth case, remove tubular neighborhoods of all of these, say of radius $r$. Let $\gamma_2^q$ be the intersection of $\Gamma$ with the boundary of the unions of these tubular neighborhoods. The radii of these tubular neighborhoods can be chosen sufficiently small so that $g_0(\gamma_2^q) \leq f_I(1)+1$ and $g_1(\gamma_2^q) \leq f_I(a+1)+1$. Furthermore, they can be chosen small enough so that there does not exist distinct $j_1, \ldots, j_{f_I(a+1)+1}$ such that $\gamma_2^q \cap \{x_{q} = y_{j_1} = \ldots = y_{j_{f_I(a+1)+1}} \} \neq \emptyset$. They can also be small enough to ensure this property still holds for all $\tilde{q} < q$.

For $t=M$, we have a class $N \cdot x_q$ where $N \in H_*\Mp(n_1-1, n_2, n_3, \ldots, n_{\ell_I+3})$.

Thus, $\Gamma$ with its intersection with the above tubular neighborhoods removed allows us to write $[\gamma] =[\gamma_1^q] + [\gamma_2^q]$ where $[\gamma_1^q]$ is organized and $\gamma_2^q$ satisfies the above conditions. Thus, the claim holds for all $q \leq n_1$. The lemma is the case where $q = n_1$.
\end{proof}

We will now prove Lemma \ref{main_lemma}.

\begin{proof}
Let $(n_1, n_2) \in \N^2$ with $n_1 > 0$. Suppose organized classes span $\Mp(\tilde{n}_1, \ldots, \tilde{n}_{\ell_I+3})$ whenever $(\tilde{n}_1, \tilde{n}_2) < (n_1, n_2)$. Let $\gamma$ be a closed $s$-chain in $\Mp(n_1, \ldots, n_{\ell_I+3})$. Let $a=0$. If $n_2=0$, then $g_0(\gamma) = g_1(\gamma) = 0$ and the claim holds. Thus, suppose $n_2 > 0$. It is clear that $g_1(\gamma) \leq f_I(0)$. There are two cases. First, suppose $f_I(0) = f_I(1)$. Then $g_0(\gamma) < f_I(0)+1 = f_I(1)+1$, and the claim holds. Second, suppose $f_I(0) > f_I(1)$. We will prove by induction that for all $q \leq n_2$, we can write $[\gamma] = [\gamma_1^q] + [\gamma_2^q]$ where $ [\gamma_1^q]$ is organized and $\gamma_2^q$ satisfies one the following:
\begin{itemize}
\item $[\gamma_2^q]=0$
\item for all $\tilde{q} \leq q$, there does not exist distinct $j_2, \ldots, j_{f_I(1)+2}$ such that $\gamma_2^q \cap \{y_{\tilde{q}} = y_{j_2} = \ldots = y_{j_{f_I(1)+2}} \} \neq \emptyset$
\end{itemize}

For $q=0$, the claim is trivial. Thus, suppose $ 0 < q \leq n_2$ and the claim holds for all $\tilde{q} < q$. Consider the homotopy of $\gamma_2^{q-1}$ affecting only the $y_{q}$ coordinate, $\gamma_t = \gamma_2^{q-1} + v_q \cdot t$ where $v_q$ is a vector that is non-zero only in the $y_q$ coordinate. For large enough $t$, say $t=M$, the $y_q$ coordinate is always far away from all other points. Call the $(s+1)$-chain given by this homotopy $\Gamma$. $\Gamma$ may not be a chain in $\Mp(n_1, \ldots, n_{\ell_I+3})$. It may intersect forbidden subspaces of the forms:

\begin{center}
$y_q = y_{j_2} = \ldots = y_{j_{f_I(0)+1}}$\\
$y_q = z_j$ \\
$y_q = $$^mw_j$\\
$y_q = x_{j_1} = \ldots = x_{j_{b}} = y_{i_2} = \ldots = y_{i_c}$ where $b\geq 1, 1\leq c \leq f_I(1)+1$\\
\end{center}

In the first case, remove a sufficiently small tubular neighborhood. The intersection of $\Gamma$ with the boundary of this neighborhood is $N|_{z_{n_3+1} = \{ y_q, y_{j_2}, \ldots, y_{j_{f_I(0)+1}} \}}$ where $N \in H_*\Mp(n_1, n_2-(f_I(0)+1), n_3+1, \ldots, n_{\ell_I+3})$.

In the second case, again remove a sufficiently small tubular neighborhood. The intersection of $\Gamma$ with the boundary of this neighborhood is $N|_{z_{j}} = [ y_q, z_j]$ where $N \in H_*\Mp(n_1, n_2-1, n_3, \ldots, n_{\ell_I+3})$.

For the third case, proceed as before. The intersection of $\Gamma$ with the tubular neighborhoods produces a class $N|_{z_{n_3+1} = [ y_q, ^mw_{j}]}$ where $N \in H_*\Mp(n_1, n_2-1, n_3+1, n_4, \ldots, n_{m+3} - 1, \ldots, n_{\ell_I+3})$.

For the fourth case, remove tubular neighborhoods of all of these, say of radius $r$. We can choose $r$ arbitrarily small. Let $\gamma_2^q$ be the intersection of $\Gamma$ with the boundary of the unions of these tubular neighborhoods. The radii of these tubular neighborhoods can be chosen sufficiently small so that there does not exist distinct $j_2, \ldots, j_{f_I(1)+2}$ such that $\gamma_2^q \cap \{y_{q} = y_{j_2} = \ldots = y_{j_{f_I(1)+2}} \} \neq \emptyset$. They can also be chosen small enough to ensure this property still holds for all $\tilde{q} < q$.

For $t=M$, we have a class $N \cdot y_q$ where $N \in H_*\Mp(n_1, n_2-1, n_3, \ldots, n_{\ell_I+3})$.

Thus, $\Gamma$ with its intersection with the above tubular neighborhoods removed allows us to write $[\gamma] = [\gamma_1^q] + [\gamma_2^q]$ where $[\gamma_1^q]$ is organized and $[\gamma_2^q]$ satisfies the above conditions. Thus, the claim holds for all $q \leq n_2$. The $a=0$ case occurs when $q = n_2$.

Now we wish to show that if the claim holds for $a$, then the claim holds for $a+1$. There will be four cases.

\textbf{Case I}: $f_I(a+1) = f_I(a)$: This case is trivial.

\textbf{Case II}: ${f_I(a+1) = f_I(a)-1}$: Use Lemma \ref{minor_lemma_4}

\textbf{Case III}: ${f_I(a+1) < f_I(a)-1}$ and $f_I(a-1) \neq f_I(a)$: Use Lemma \ref{minor_lemma_3} followed by Lemma \ref{minor_lemma_4}.

\textbf{Case IV}: ${f_I(a+1) < f_I(a)-1}$ and $f_I(a-1) = f_I(a)$: Use Lemma \ref{minor_lemma_1} followed by Lemma \ref{minor_lemma_2} followed by Lemma \ref{minor_lemma_4}.

Thus, Lemma \ref{main_lemma} holds.
\end{proof}

\end{appendices}

\bibliographystyle{amsalpha}
\bibliography{biblio.bib}

\end{document}